\documentclass[11pt, reqno]{amsart}
\usepackage[utf8]{inputenc}
\usepackage{a4wide}
\usepackage{geometry}
\usepackage{amsmath,eucal, mathtools}
\usepackage{xcolor}
\usepackage{bbm,stix}
\usepackage{enumitem}
\usepackage{graphics}
\usepackage{epsfig}
\usepackage{graphicx}
\usepackage{wrapfig}
\usepackage{textcomp}
\usepackage{tikz}
\usepackage{pgfplots}
\usepackage{dsfont}
\pgfplotsset{compat=1.14}
\usepackage{comment}
\usepackage{multicol}
\usepackage{mathrsfs}
\usepackage{epstopdf}
\usepackage{cancel}
\usepackage[colorlinks=true,allcolors=black]{hyperref}
\usepackage{bm}
\usepackage[]{appendix}
\usepackage{booktabs} 
\usepackage{graphicx,subfig}
\usepackage{float}

\newtheorem{definition}{Definition}[section]
\newtheorem{theorem}[definition]{Theorem}
\newtheorem{prop}[definition]{Proposition}
\newtheorem{lem}[definition]{Lemma}
\newtheorem{cor}[definition]{Corollary}
\newtheorem{remark}[definition]{Remark}

\newcommand{\abs}[1]{\left|#1\right|}
\newcommand{\norm}[1]{\left\|#1\right\|}
\newcommand{\R}{\mathbb{R}}
\newcommand{\N}{\mathbb{N}}

\renewcommand{\d}{{\rm d}}
\newcommand{\de}{\partial}
\newcommand{\sgn}[1]{\mathrm{sign}\!\left(#1\right)}
\newcommand{\disint}[2]{\left\{ #1 , \ldots , #2 \right\}}
\newcommand{\vLip}{\left[v\right]_{\mathrm{Lip}}}
\newcommand{\TV}[1]{\mathrm{TV}\!\left[#1\right]}

\title[Fully discrete FtL approximation of scalar conservation laws with vacuum]{Fully discrete follow-the-leader approximation of one-dimensional scalar conservation laws with vacuum}

\author{M.\ Di Francesco, S.\ Fagioli, V.\ Iorio, M.D.\ Rosini}
\date{}

\begin{document}
\allowdisplaybreaks

\begin{abstract}
We present a fully discrete particle approximation for one-dimensional scalar conservation laws. Under suitable monotonicity assumptions on the macroscopic velocity, we construct a vacuum-compatible family of time-discrete particle equations and show that an appropriate piecewise-constant density reconstruction from the particle setting converges to the unique entropy weak solution of the macroscopic scalar conservation law.
\end{abstract}

\address{Marco Di Francesco - DISIM - Department of Information Engineering, Computer Science and Mathematics, University of L'Aquila, Via Vetoio 1 (Coppito)
67100 L'Aquila (AQ) - Italy}
\email{marco.difrancesco@univaq.it}

\address{Simone Fagioli - DISIM - Department of Information Engineering, Computer Science and Mathematics, University of L'Aquila, Via Vetoio 1 (Coppito)
67100 L'Aquila (AQ) - Italy}
\email{simone.fagioli@univaq.it}

\address{Valeria Iorio - DISIM - Department of Information Engineering, Computer Science and Mathematics, University of L'Aquila, Via Vetoio 1 (Coppito)
67100 L'Aquila (AQ) - Italy}
\email{valeria.iorio1@univaq.it}

\address{Massimiliano D. Rosini - Faculty of Mathematics and Computer Science, Maria Curie-Skłodowska University, Plac Marii Curie-Skłodowskiej 1, Lublin, 20031, Poland and
Department of Management and Business Administration, University \lq\lq G.~d'Annunzio\rq\rq\ of Chieti-Pescara, viale Pindaro, 42, Pescara, 65127, Italy
}
\email{massimiliano.rosini@unich.it}

\subjclass{
35L60,
35L03,
65M12,
76A30}
\keywords{Scalar conservation laws, Deterministic particle approximation, Theta-method, Entropy solutions.}

\maketitle

\section{Introduction}

The approximation of entropy solutions to one-dimensional scalar conservation laws
by particle-based and Lagrangian methods has a long and rich history. Given an initial density $\overline{\rho} \colon \mathbb{R} \to [0,R]$, with $R>0$, we consider the Cauchy problem
\begin{equation}
\begin{dcases}\label{e:CP}
\partial_t \rho + \partial_x\!\left(\rho \, v(\rho)\right) = 0,\\
 \rho(x,0)=\overline{\rho}(x),
\end{dcases}
\end{equation}
where the velocity function $v \colon [0,R] \to\R$ is assumed to be strictly decreasing.
Equation~\eqref{e:CP} includes the classical Lighthill-Whitham-Richards model
as well as broader continuum descriptions of traffic and crowd dynamics, shallow water, sedimentation and particle settling and granular flow on an incline, see \cite{lighthill1955kinematic, richards1956shock, helbing2001traffic, whitham1974waves, dafermos2016hyperbolic, kynch1952sedimentation, savage1989motion, gray2001granular, Haberman-book, GaravelloPiccoli-book, Rosini-book} and references therein.

A natural Lagrangian discretization is given by the follow-the-leader (FTL) system
\begin{equation}
\begin{aligned}
&\dot x_N(t)= v(0),
\\
&\dot x_i(t) = v\!\left(\frac{\ell}{x_{i+1}(t)-x_i(t)}\right),&
&i\in \disint{0}{N-1},
\end{aligned}
\label{e:FTL}
\end{equation}
where $\ell = \norm{\overline{\rho}}_{L^1}/N$, with $N\in\N$. Note that fixing a separate dynamics for the rightmost particle is meant to capture the  \emph{upwind} nature of the passage from the macroscopic description to the particle-based one. In particular, the upwinding of the resulting scheme is \emph{not} determined by the sign of the velocity $v$ itself, that we can assume to be negative, but rather on its monotonicity. This becomes more clear by introducing the \emph{Lagrangian density}
\[
R(z,t) = \rho\bigl(X(z,t),t\bigr),
\]
where $X(\,\cdot\,,t)$ denotes the \emph{pseudo}-inverse of the cumulative distribution function
\[
F(x,t) = \int_{-\infty}^x \rho(y,t) \, \d{y}.
\]
Formally, rewriting the conservation law in these Lagrangian coordinates, one finds that $R$ satisfies the scalar conservation law
\[
\partial_t R + R^2 \, v'(R) \, \partial_z R = 0.
\]
When $v'(R) < 0$, the characteristic speeds of this equation are negative in the $z$-variable, implying that characteristics propagate towards decreasing values of $z$. In other words, the evolution of $R$ at a given Lagrangian position depends on values located \emph{ahead} in $z$. This shows that the effective upwinding in the Lagrangian formulation (and hence in the particle approximation) is dictated by the sign of $v'$, not by the sign of $v$ itself, see \figurename~\ref{fig:neg_vel} for a numerical example.

\begin{figure}
 \centering
 \includegraphics[width=7cm,height=6cm]{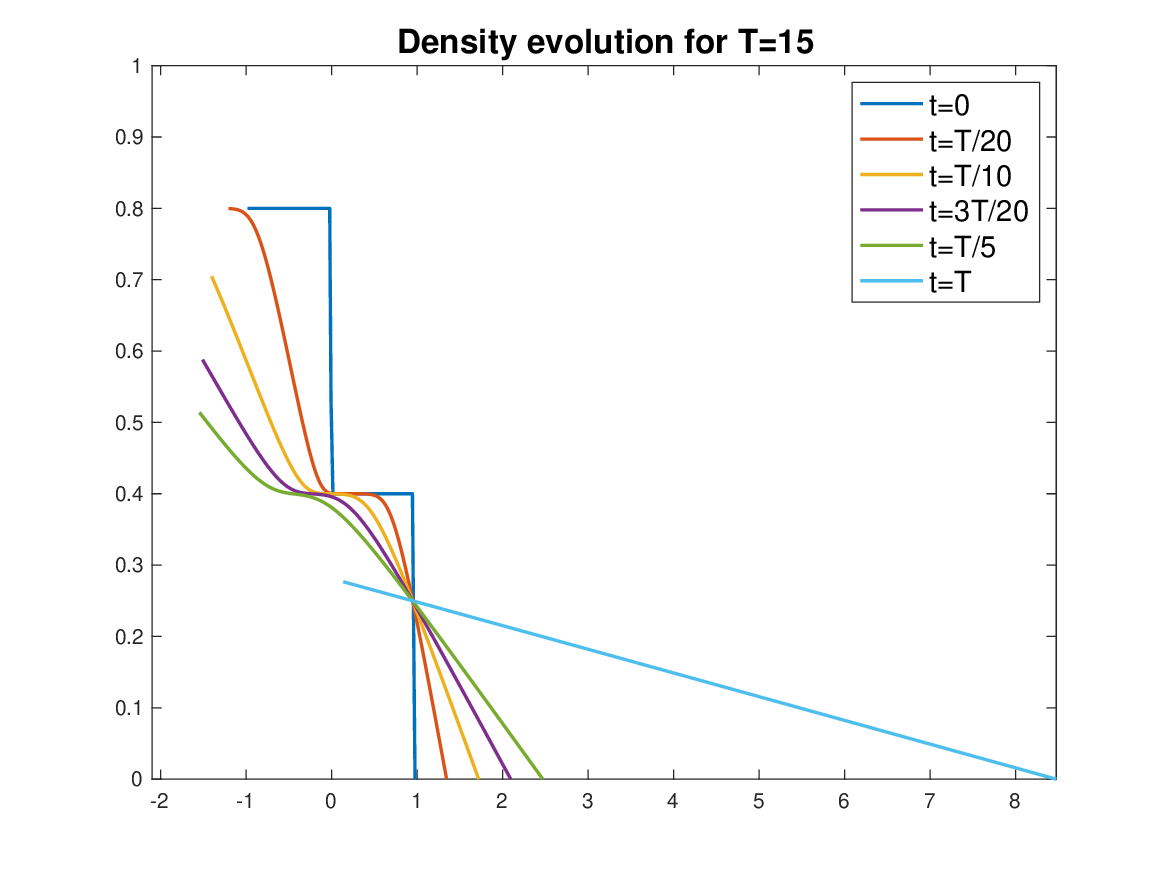}
 \includegraphics[width=7cm,height=6cm]{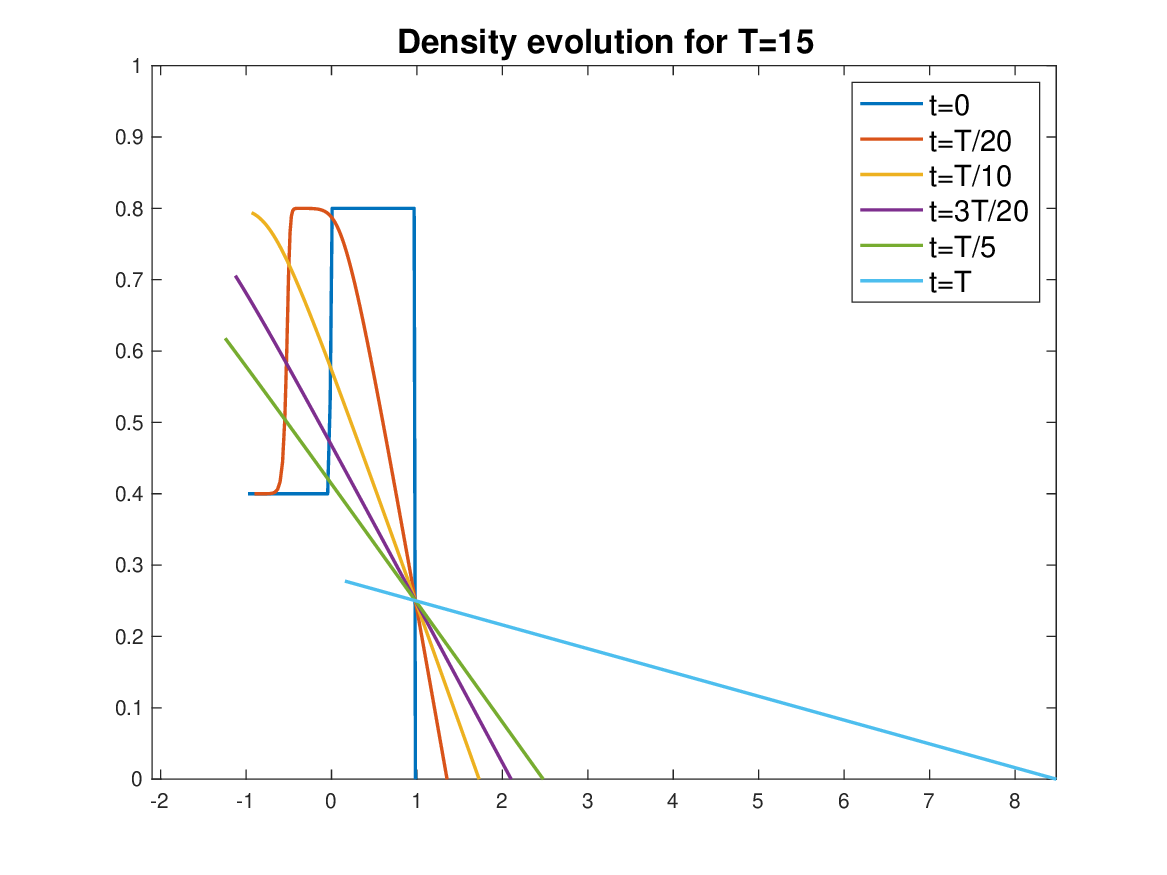}
\caption{Approximate solution $\rho_n$ plotted at different times, corresponding to
$v(\rho)=\frac{1}{2}-\rho$, with initial data
$\bar{\rho}(x)=0.8 \, \mathbf{1}_{[-1,0)}(x) + 0.4 \, \mathbf{1}_{[0,1)}(x)$ (left) and
$\bar{\rho}(x) = 0.4 \, \mathbf{1}_{[-1,0)}(x) + 0.8 \, \mathbf{1}_{[0,1)}(x)$ (right).
We use the implicit version ($\theta=0$) of the scheme described in \eqref{e:theta_intro} below, with $\ell=0.012$ and $\tau=0.01$.
The density $\rho_n$ attains values both greater than and less than $1/2$; hence the velocity $v(\rho_n)$ takes both negative and positive values.
Moreover, $\rho_n$ approximates the exact solution to \eqref{e:CP}, showing the correct upwinding in the scheme.
}
\label{fig:neg_vel}
\end{figure}

The rigorous connection between macroscopic and microscopic descriptions of scalar conservation laws has been extensively studied in the past decades. A probabilistic approach has been proposed in a vast literature,see e.g. \cite{ferrari, ferrari_TASEP, landim} and the references therein. Also a kinetic approximation of nonlinear conservation laws has been carried out in \cite{lions_perthame_tadmor}. Note that the wave-front tracking algorithm, originally introduced in \cite{dafermos1972polygonal, dafermos2016hyperbolic} (see also \cite{bressan1992wft, bressan2000hyperbolic, diperna1976global, holden2015front} and the references therein), is based on a micro-macro dichotomy as well, although of a different nature than the one considered in this work.

A different and highly influential perspective came with the many-particle limit theory, \cite{colombo_rossi, DiFrancescoRosini_ARMA}. In particular, in \cite{DiFrancescoRosini_ARMA}
authors established a rigorous derivation of nonlinear scalar conservation laws \eqref{e:CP} from the
microscopic FTL system \eqref{e:FTL}.
They proved that a proper discretized density associated with the
particle system converges to the unique Kruzhkov entropy solution of~\eqref{e:CP}.
Their analysis relies on discrete BV estimates and a discrete Oleĭnik-type inequality, already encoded at the discrete level. The aforementioned approach was also extended to other equations in different contexts, see \cite{AndreianovRosiniStivaletta, DiFrancescoFagioliRadici, DiFrancescoFagioliRosiniRusso1, DFIOSC2025, DiFrancescoStivaletta1, FagioliRadici, FagioliFavre, FaTse} and references therein.

These convergence results are based on a \emph{continuous-time} particle dynamics. In implementing the scheme numerically, a time discretization is unavoidable,
and explicit Euler approximations remain stable only under a strict positivity
assumption on the initial density, see \cite{holden2015front, holden2018ftl_lwr, holden2018continuum, holden2024continuum_nonlocal_ftl}.

The goal of this paper is to overcome these limitations by introducing a
\emph{fully discrete} many-particle approximation of~\eqref{e:CP}
via a general $\theta$-method:
\begin{equation}\label{e:theta_intro}
 x^{m+1}_i
= x^{m}_i
+ \tau \left(\theta \, v\!\left(\frac{\ell}{x^{m}_{i+1}-x^{m}_i}\right) + \left(1-\theta\right) v\!\left(\frac{\ell}{x^{m+1}_{i+1}-x^{m+1}_i}\right) \right),
\qquad i \in \disint{0}{N-1},
\end{equation}
together with a suitable condition for the rightmost particle.
Above, $\ell = \norm{\overline{\rho}}_{L^1}/N$, with $N \in \N$, $\tau$ denotes the time step, and $\theta \in [0,1]$ is the interpolation parameter between the implicit Euler ($\theta = 0$), the Crank--Nicolson ($\theta = 1/2$), and the explicit Euler ($\theta = 1$) schemes.
Such schemes are classical in the numerical treatment of ODEs and parabolic
equations \cite{QuarteroniValli2008, HairerWannerII, HundsdorferVerwer2003,
CrankNicolson1947}, but, up to the authors knowledge, their adaptation to particle-based approximations of
hyperbolic conservation laws was not investigated before,
especially in presence of vacuum.

The main contributions of this paper are the establishment of a vacuum-compatible
stability framework for the fully discrete particle system and the proof of
convergence of suitable piecewise constant approximations to entropy solutions
of~\eqref{e:CP}. In particular, we establish global well-posedness of the fully
discrete particle dynamics for all $\theta \in [0,1]$, under an appropriate
CFL-type condition, see \eqref{e:CFLtheta} below. This allows vacuum to be treated naturally, without
compromising the stability of the scheme. We then prove $L^\infty$ and BV estimates for a suitably reconstructed fully
discrete density, uniformly with respect to both the number of particles and the
number of time steps, thus extending the compactness theory
of~\cite{DiFrancescoRosini_ARMA} to the fully discrete setting. By combining
discrete stability, compactness arguments and Optimal Transport tools, we
show that a fully discrete density suitably constructed from the scheme \eqref{e:theta_intro} strongly converges in $L^1_{t,x}$ to the unique
Kruzhkov entropy solution of~\eqref{e:CP}, without requiring any strictly positive
lower bound on the initial density $\overline{\rho}$. We remark here that the convergence to weak solutions holds under the CFL-type condition, while a technical limitation requiring the time step $\tau$ to tend
to zero faster than the \emph{particle size} $\ell$ is imposed in order to obtain
convergence to the entropy solution. This requirement is more restrictive than the
standard CFL-type condition. While the natural hyperbolic scaling dictates the numerical relation $\tau =
\mathcal{O}(\ell)$, the identification of entropy solutions may require times going to zero faster than space, see for instance references relying on compactness arguments or
doubling-of-variables techniques \cite{kuznetsov1976accuracy,bressan2000hyperbolic}. The condition $\tau = o(\ell)$ in our context is a purely technical and analytical limitation, which is independent of the nature of the scheme. At first sight, the condition may seem to hide a parabolic scaling and thus suggest the presence of numerical viscosity. However, this is not possible because the scheme converges to the unique entropy solution, which rules out any artificial viscosity.

Overall, this work provides the first comprehensive fully discrete
Follow-the-Leader approximation of scalar conservation laws that is stable in
the presence of vacuum and compatible with a broad class of time
discretizations. In this way, it unifies and extends the approaches initiated
in~\cite{DiFrancescoRosini_ARMA,holden2018continuum}, offering a robust numerical
and analytical framework for equation considered.

The paper is organized as follows. In Section \ref{sec:main} we list all the required assumptions and we present the precise particle setting, together with its global well-posedness that is equivalent to the possibility of defining our discretized density, see Theorem \ref{thm:main_dis}. We then state our main result concerning the convergence to entropy weak solution in Theorem \ref{thm:main}. Sections \ref{s:proof_dis} and \ref{s:proof_main} are devoted to the proofs of the aforementioned results. In particular, in Sections \ref{s:proof_dis} we provide uniform $L^\infty$ and $BV$ bounds that allow to prove Theorem \ref{thm:main_dis}, whereas Section \ref{s:proof_main} contains all the ingredients for proving Theorem \ref{thm:main}. In Appendix \ref{sec:appendix} we provide a technical lemma concerning interpolation between $L^1$, $BV$ and $1$-Wasserstein spaces.

\section{The numerical particles scheme and main results}\label{sec:main}

In this section, we introduce our numerical scheme and prove that the associated discrete density is globally well defined, see Theorem~\ref{thm:main_dis}.
We also show that the scheme yields a sequence of discrete densities converging to the entropy solution of the Cauchy problem \eqref{e:CP}, see Theorem~\ref{thm:main}.

\subsection{The scheme}

Denote by $L^\infty_c(\R)$ the space of essentially bounded functions (w.r.t.~Lebesgue measure) with compact support.
Denote by $R>0$ the maximal density.
We shall work under the standing assumptions
\begin{gather}
\label{e:vel}
v\in C^1([0,R]),
\qquad
v'(\rho)<0 \quad \forall\rho\in [0,R],
\\
\label{e:inirho}
\overline{\rho} \in L^\infty_c(\R)\cap BV(\R),
\qquad
0 \leqslant \overline{\rho}(x) \leqslant R \hbox{ for a.e.~}x\in\R.
\end{gather}
Fix a time horizon $T>0$, two positive integers $M,N\in \N$, and a parameter $\theta \in [0,1]$.
We then set
\begin{equation}
\label{e:t_method_i}
x_{i}^{m+1} = x_{i}^{m} + \tau \left( \theta \, v\!\left(R_{i}^{m}\right) + \left(1-\theta\right) v\!\left(R_{i}^{m+1}\right) \right),
\quad i\in\disint{0}{N},\ m \in\disint{0}{M-1},
\end{equation}
where
\begin{equation}
\label{e:Rim}
R_{i}^{m} =
\begin{dcases}
\frac{\ell}{x_{i+1}^{m}-x_{i}^{m}} &\hbox{if } i \in \disint{0}{N-1},
\\
0&\hbox{if } i = N,
\end{dcases}
\qquad
\ m \in\disint{0}{M},
\end{equation}
and
\begin{align}
\label{e:tau-ell-L}
&\tau = T/M,&
&\ell = L / N,&
&L = \norm{\overline{\rho}}_{L^1} .
\end{align}
System \eqref{e:t_method_i} is coupled with the initial condition
\begin{equation}
\label{e:t_method_ini}
x_{i}^0 = \overline{x}_{i},\qquad i\in \disint{0}{N},
\end{equation}
where the initial positions are defined recursively as
\begin{equation}
\label{e:ini}
\left\{\begin{aligned}
\overline{x}_0 & =
\min\!\left(\mathrm{supp} (\overline{\rho})\right) ,
\\
\overline{x}_{i+1} & =
\min\left\{x\geqslant \overline{x}_{i}\,:\,\int_{\overline{x}_{i}}^x \overline{\rho}(y) \, \d{y} \geqslant \ell \right\} ,\qquad i\in\disint{0}{N-1} .
\end{aligned} \right.
\end{equation}
Observe that
\begin{equation}
\label{e:t_method_N}
x_{N}^{m+1} = x_N^{m}+\tau \, v(0) = \overline{x}_{N} + t^{m+1} \, v(0),\qquad m \in\disint{0}{M-1},
\end{equation}
where
\[t^{m} = m \, \tau.\]

\begin{remark}
In order to effectively solve the particle system \eqref{e:t_method_i}-\eqref{e:t_method_ini}, we observe that given a particle configuration $x_0^{m}<\ldots<x_N^{m}$ at time $t^{m} = m \, \tau$, the position of the particles at time $t^{m+1}$, $x_0^{m+1}<\ldots<x_N^{m+1}$, is derived from \eqref{e:t_method_i} and \eqref{e:t_method_ini}.
Indeed, the position $x_N^{m+1}$ is explicitly derived from the identity \eqref{e:t_method_N}. Then, the position $x_{i}^{m+1}$ is known once the positions $x_{i}^{m},x_{i+1}^{m}$, and $x_{i+1}^{m+1}$ are.
To see this, we write \eqref{e:t_method_i} as
\[x_{i}^{m+1}-\left(1-\theta\right) \tau \, v\!\left(\frac{\ell}{x_{i+1}^{m+1}-x_{i}^{m+1}}\right) = x_{i}^{m}+\theta \, \tau \, v\!\left(\frac{\ell}{x_{i+1}^{m}-x_{i}^{m}} \right) \]
and observe that the map
\begin{equation}
\label{e:map}
\left(-\infty,x^{m+1}_{i+1} - \frac{\ell}{R} \right]\ni x\mapsto x - \left(1-\theta\right) \tau \, v\!\left(\frac{\ell}{x_{i+1}^{m+1}-x} \right)
\end{equation}
is continuously differentiable with derivative
\[
1 - \left(1-\theta\right) \tau \, v'\!\left(\frac{\ell}{x_{i+1}^{m+1}-x} \right) \frac{\ell}{\left(x_{i+1}^{m+1}-x\right)^{2}} \geqslant 1
\]
in view of the assumption \eqref{e:vel}.
Hence, the map \eqref{e:map} is invertible and the position $x_{i}^{m+1}$ is well defined via the standard implicit function theorem.
\end{remark}

We denote by $\vLip$ the Lipschitz constant of $v$ and let
\begin{equation}
\label{e:V}
V = \norm{v}_{L^\infty} = \max\left\{\abs{v(0)},\abs{v(R)}\right\}.
\end{equation}
We emphasize that the velocity $v(\rho)$ can be negative, that is, $v(\rho)\in\R$. This assumption is not standard in the context of traffic flow modeling, but we prefer not to restrict ourselves to non-negative velocities in order to obtain more general results within a mathematical and modelling framework.

The discrete density associated with the scheme \eqref{e:t_method_i}-\eqref{e:t_method_ini} is defined on $\R\times[0,T)$ by
\begin{equation}
\label{e:disdens}
\rho^M_N(x,t) = \sum_{m=0}^{M-1} \sum_{i=0}^{N-1} R_{i}^{m} \, \mathbf{1}_{[x_{i}^{m},x_{i+1}^{m})}(x) \, \mathbf{1}_{[t^{m},t^{m+1})}(t)
\end{equation}
and is extended to $t=T$ by continuity. As shown in the next theorem, $\rho^M_N$ is globally well defined, see Section~\ref{s:proof_dis} for the proof.
\begin{theorem}
\label{thm:main_dis}
Consider a velocity function $v \colon [0,R] \to \R$ and an initial density $\overline{\rho} \colon \R \to [0,R]$ satisfying \eqref{e:vel} and \eqref{e:inirho}, respectively.
Fix a time horizon $T>0$, a parameter $\theta \in [0,1]$ and two positive integers $M,N\in\N$ satisfying the following CFL-type condition
\begin{equation}
\label{e:CFLtheta}
\theta \, N \, T \vLip R^2 < L \, M,
\end{equation}
where $L = \norm{\overline{\rho}}_{L^1(\R)}$.
In this case, the discrete density \eqref{e:disdens} is well defined on $\R\times[0,T]$ and satisfies the following estimates
\begin{align}
\label{e:MP}
(x,t) \in \R\times[0,T]\,
\Longrightarrow\,&
0 \leqslant \rho^M_N(x,t) \leqslant R,
\\
\label{e:TV}
m\in\disint{0}{M-1}\,
\Longrightarrow\,&
\TV{\rho^M_N\!\left(\,\cdot\,,t^{m+1}\right)} \leqslant \TV{\rho^M_N\!\left(\,\cdot\,,t^{m}\right)} \leqslant \TV{\overline{\rho}}.
\end{align}
Furthermore
\begin{equation}
\label{e:lipL1t}
0\leqslant t_1 \leqslant t_2 \leqslant T
\Longrightarrow
\norm{\rho^M_N(\,\cdot\,,t_2) - \rho^M_N(\,\cdot\,,t_1)}_{L^1} \leqslant C \, (t_2-t_1+\tau)^{1/2},
\end{equation}
where $\tau$ is given in \eqref{e:tau-ell-L}\textsubscript{1} and $C = 4 \left(2 \,\TV{\overline{\rho}} \, L \, V\right)^{1/2}$.
\end{theorem}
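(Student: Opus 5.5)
We argue by induction on $m$, working with the discrete Lagrangian densities $R_i^m$. The inductive statement is: the configuration at time $t^m$ is well defined, ordered, and satisfies $0<R_i^m\leqslant R$ for $i\in\disint{0}{N-1}$. For $m=0$ this holds because the construction \eqref{e:ini} gives $\int_{\overline{x}_i}^{\overline{x}_{i+1}}\overline{\rho}=\ell$ and $\overline{\rho}\leqslant R$, whence $\overline{x}_{i+1}-\overline{x}_i\geqslant \ell/R$.

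\textbf{Maximum principle.} Assume the claim at step $m$. The positions $x_i^{m+1}$ are then built backwards from $i=N$ as in the Remark, by inverting the map \eqref{e:map}. The point to check is that the unique solution satisfies $x_{i+1}^{m+1}-x_i^{m+1}\geqslant \ell/R$, i.e.\ that the right-hand side lies in the range of \eqref{e:map}. Subtracting the equations for $i$ and $i+1$, and writing $d_i=\ell/R_i$, gives
\[
d_i^{m+1}-(1-\theta)\tau\bigl(v(R_{i+1}^{m+1})-v(R_i^{m+1})\bigr)=d_i^m+\theta\tau\bigl(v(R_{i+1}^m)-v(R_i^m)\bigr).
\]
Each side is a monotone function of $R_i$. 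We then compare $R_i^{m+1}$ with $\max(R_i^m,R_{i+1}^m,R_{i+1}^{m+1})$ and with the corresponding minimum. On the explicit side, the function $R\mapsto \ell/R-\theta\tau v(R)$ must be decreasing on $[0,R]$ (more precisely, on the relevant range). Its derivative is $-\ell/R^2-\theta\tau v'(R)$, which is negative exactly when $\theta\tau\vLip R^2<\ell$. This is the CFL condition \eqref{e:CFLtheta}, since $\tau=T/M$ and $\ell=L/N$. The implicit side is monotone for free because $v'<0$. Writing the scheme as
\[
\Phi(R_i^{m+1},R_{i+1}^{m+1})=\Psi(R_i^m,R_{i+1}^m),
\]
monotone in each argument, we obtain a discrete comparison principle: $R_i^{m+1}$ lies between the min and max of $\{R_i^m,R_{i+1}^m,R_{i+1}^{m+1}\}$. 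Inducting downward in $i$, starting from $R_N\equiv 0$, yields $0\leqslant R_i^{m+1}\leqslant R$, which is \eqref{e:MP}. In particular $d_i^{m+1}<\infty$, so no collisions occur and particles stay ordered.

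\textbf{TV bound.} We have $\TV{\rho^M_N(\cdot,t^m)}=R_0^m+\sum_i|R_{i+1}^m-R_i^m|$, with $R_N=0$. From the monotone (incremental) form we write
\[
R_i^{m+1}-R_i^m=\alpha_i\,(R_{i+1}^m-R_i^m)+\beta_i\,(R_{i+1}^{m+1}-R_i^{m+1}),
\]
where $\alpha_i,\beta_i\geqslant 0$ are obtained from mean value theorems. The CFL condition gives $\alpha_i\in[0,1]$. This is a Harten-type incremental scheme, implicit in the upwind direction. We then apply the standard argument for implicit upwind schemes: take differences in $i$, sum absolute values, and telescope. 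Boundary terms at $i=N$ vanish because $R_N\equiv 0$, and the term at $i=0$ is handled by the same comparison. The result is $\TV{\cdot}(t^{m+1})\leqslant \TV{\cdot}(t^m)$. Finally, $\TV{\rho^M_N(\cdot,0)}\leqslant\TV{\overline{\rho}}$ because each $R_i^0$ is an average of $\overline{\rho}$ over $[\overline{x}_i,\overline{x}_{i+1})$. I expect this TV step, with the coupled implicit term, to be the main obstacle. If the direct incremental argument fails, I would instead prove an $\ell^1$-contraction for two solutions and apply it to the shifted sequence $R_{i+1}$ against $R_i$.

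\textbf{Time regularity.} First, each particle moves by at most $V\tau$ per step, so $|x_i^{m'}-x_i^m|\leqslant V(t^{m'}-t^m)$. Both densities have mass $L$ and are supported in intervals of bounded length. Their pseudo-inverse cumulative distributions coincide at the nodes $i\ell$, so the 1-Wasserstein distance satisfies
\[
W_1\bigl(\rho(t^{m'}),\rho(t^m)\bigr)\leqslant L\,V\,(t^{m'}-t^m),
\]
up to a term controlled by $\ell$, which is absorbed. Next, we use the interpolation lemma of the Appendix:
\[
\|f-g\|_{L^1}\leqslant C\bigl(W_1(f,g)\,(\TV{f}+\TV{g})\bigr)^{1/2}.
\]
Combining this with the TV bound gives \eqref{e:lipL1t} at grid times. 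For general $t_1\leqslant t_2$, we replace each $t_j$ by the grid point below it. This changes time differences by at most $\tau$, which explains the $+\tau$ in \eqref{e:lipL1t}. Tracking the constants yields $C=4(2\,\TV{\overline{\rho}}\,L\,V)^{1/2}$.
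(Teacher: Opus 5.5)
Your proposal is correct and follows essentially the paper's route. Your local comparison $R_i^{m+1}\leqslant\max\{R_i^m,R_{i+1}^m,R_{i+1}^{m+1}\}$, with downward induction from $R_N\equiv 0$, is a direct form of the paper's first-failure contradiction argument, and the same inequality $\Psi(R_i^m,R_{i+1}^m)\geqslant\Psi(R,R)=\ell/R$ also settles the solvability of the implicit step in $(0,R]$ that you flagged. Your Harten-type incremental TV argument is a cleaner bookkeeping of the paper's summation by parts, and it does close: use $\alpha_i\in[0,1)$, $\beta_i\geqslant 0$, $\alpha_N=\beta_N=0$, and absorb $R_0^{m+1}-R_0^m$ into the leftover terms $-\alpha_0\abs{R_1^m-R_0^m}-\beta_0\abs{R_1^{m+1}-R_0^{m+1}}$.

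One correction in the time estimate: there is no $O(\ell)$ remainder in the $W_1$ bound. Such a term could not be absorbed anyway, since the CFL condition only bounds $\tau$ by $\ell$, not conversely. By Lemma~\ref{lem:representation}, $X_N^m$ is exactly the piecewise linear interpolant of the points $(i\ell,x_i^m)$, so $\norm{X_N^{m'}-X_N^m}_{L^1(0,L)}\leqslant L\,V\,(t^{m'}-t^m)$ holds exactly, which is even sharper than the paper's $4LV$.
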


\subsection{Main result}

We are now in a position to state our main result, which shows that the discrete density \eqref{e:disdens} converges to the entropy solution of the Cauchy problem \eqref{e:CP}, see Section~\ref{s:proof_main}.
For completeness, we recall the definitions of weak and entropy solutions.

\begin{definition}
\label{d:weak}
A weak solution of the Cauchy problem \eqref{e:CP} is a continuous map $\rho \colon [0,T] \to L^\infty_c(\R)$ which satisfies
\begin{equation*}
\int_0^T \int_{\R} \left( \rho(x,t) \, \partial_t\varphi + f(\rho) \, \partial_x\varphi \right) \d{x} \, \d{t}
+ \int_{\R} \overline{\rho}(x) \, \varphi(x,0) \, \d{x}
= 0
\end{equation*}
for every test function $\varphi\in C_c^1(\R\times[0,T)])$, where $f(\rho) = \rho \, v(\rho)$.
\end{definition}

It is well known that weak solutions to scalar conservation laws are generally not unique. Uniqueness is recovered by imposing entropy
admissibility conditions, which select the physically relevant solution and
ensure stability with respect to perturbations of the initial data. In the
scalar case, entropy solutions are
unique in the sense of Kruzhkov; see, e.g.,
\cite{bressan2000hyperbolic, dafermos2016hyperbolic, kruvzkov}.

\begin{definition}
\label{d:entro}
An entropy solution of the Cauchy problem \eqref{e:CP} is a continuous map $\rho \colon [0,T] \to L^\infty_c(\R)$ which satisfies
\begin{equation}
\label{e:entro_ine}
\int_0^T \int_{\R} \left( \abs{\rho-k} \partial_t\varphi + \sgn{\rho-k} \left(f(\rho)-f(k)\right) \partial_x\varphi \right) \d{x} \, \d{t}
+ \int_{\R} \overline{\rho}(x) \, \varphi(x,0) \, \d{x}
\geqslant 0
\end{equation}
for every $k\in[0,R]$ and every non-negative test function $\varphi\in C_c^1(\R\times[0,T)])$, where $f(\rho) = \rho \, v(\rho)$.
\end{definition}

We have the following theorem, see Section~\ref{s:proof_main} for the proof.

\begin{theorem}
\label{thm:main}
Consider a velocity function $v \colon [0,R] \to \R$ and an initial density $\overline{\rho} \colon \R \to [0,R]$ satisfying \eqref{e:vel} and \eqref{e:inirho}, respectively.
Fix a time horizon $T>0$, a parameter $\theta \in [0,1]$ and two monotone sequences of positive integers $(M_n)_n,(N_n)_n\in\N$ satisfying the CFL-type condition 
\begin{equation}
\label{e:CFL}
\theta \, N_n \, T \vLip R^2 < L \, M_n,
\end{equation}
where $L = \norm{\overline{\rho}}_{L^1}$.
If
\begin{equation*}
\lim_{n\to+\infty}N_n = +\infty,
\end{equation*}
then the sequence of discrete densities $(\rho_n)_n$ with
\begin{equation}
\label{e:rn}
\rho_n = \rho^{M_n}_{N_n},
\end{equation}
see \eqref{e:disdens}, converges (up to a subsequence) to a weak solution of the Cauchy problem \eqref{e:CP}.
Moreover, if
\begin{equation}
\label{e:CFL-extra}
\lim_{n\to+\infty}\frac{N_n}{M_n} = 0,
\end{equation}
then the (whole) sequence of discrete densities $(\rho_n)_n$ converges to the unique entropy solution of the Cauchy problem \eqref{e:CP}.
\end{theorem}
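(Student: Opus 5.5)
Theorem~\ref{thm:main} will follow from three steps: compactness from Theorem~\ref{thm:main_dis}, consistency of the scheme in weak form, and identification of the entropy solution through a discrete entropy inequality.

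\textbf{Compactness.} By \eqref{e:MP}, \eqref{e:TV} and \eqref{e:lipL1t}, the family $(\rho_n)_n$ is bounded in $L^\infty$. It is bounded in $BV$ in space uniformly in time, and its time modulus of continuity in $L^1$ is $C(t_2-t_1+\tau_n)^{1/2}$. Supports stay in a fixed compact set, since particles move with speed at most $V$ and $\overline{x}_0,\overline{x}_N$ lie in the support of $\overline\rho$. A generalized Aubin--Lions / Helly-type argument (Kolmogorov--Riesz in $L^1_{x}$ at each time plus an Ascoli argument in time, with the $\tau_n$ defect vanishing since the CFL condition gives $\tau_n\to0$ when $\theta>0$; for $\theta=0$ I would assume $M_n\to\infty$ as implicit in the construction) yields a subsequence converging in $L^1_{loc}(\R\times[0,T])$ and in $C([0,T];L^1)$ to some $\rho$. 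The initial condition is recovered because $\rho_n(\cdot,0)\to\overline\rho$ in $L^1$; this uses \eqref{e:ini} and $\TV{\overline\rho}$, with an error $O(\ell\,\TV{\overline\rho})$.

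\textbf{Weak solution.} Fix $\varphi\in C^1_c$ and compute $\int\rho_n(t^{m+1})\varphi-\int\rho_n(t^m)\varphi$. On each cell the mass is $\ell$. I would write $\int_{x_i}^{x_{i+1}}R_i\varphi\,\d x=\ell\,\varphi(x_i)+O(\ell\,(x_{i+1}-x_i)\|\varphi_x\|_\infty)$ and use \eqref{e:t_method_i}, so that the difference equals $\ell\sum_i\tau(\theta v(R_i^m)+(1-\theta)v(R_i^{m+1}))\varphi_x(x_i)$ plus remainders. The main term is converted back to $\tau\int f(\rho_n)\varphi_x$, where $f(R_i)(x_{i+1}-x_i)=\ell\,v(R_i)$. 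The $\theta$-mixing between levels $m$ and $m+1$ is controlled by the time estimate \eqref{e:lipL1t}, which makes the replacement of $\rho_n(t^{m+1})$ by $\rho_n(t^m)$ cost $O(\tau^{1/2})$ per unit time after summation. The remaining error terms are $O(\ell)$ times $\sum_i(x_{i+1}-x_i)$ (bounded support), or quadratic Taylor terms controlled by $\|\varphi\|_{C^2}$ after a density argument. Summation by parts in $m$ and passage to the limit give Definition~\ref{d:weak}. Cells where $x_{i+1}-x_i$ is large (vacuum) cause no problem because $R_i$ is then small and $v$ stays bounded. Alternatively, following the Optimal Transport hint, I would compare $\rho_n$ with the empirical measure $\ell\sum_i\delta_{x_i}$ in $W_1$, with $W_1\le \ell\cdot$diam, and use the appendix interpolation lemma.

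\textbf{Entropy condition (main obstacle).} Following \cite{DiFrancescoRosini_ARMA}, I would prove a discrete Oleĭnik-type one-sided bound instead of Kruzhkov inequalities directly. The target is an estimate on the discrete Lagrangian quantity $v(R_{i+1}^m)-v(R_i^m)$ of the form
\[
\frac{v(R_{i}^{m})-v(R_{i+1}^{m})}{x_{i+1}^m-x_i^m}\ \text{bounded above by}\ \frac{1}{t^m}+O(\tau/\ell),
\]
or equivalently a one-sided Lipschitz bound in the Lagrangian variable. It should be derived by subtracting \eqref{e:t_method_i} for consecutive indices and using monotonicity of the implicit map \eqref{e:map}, which yields a discrete Riccati-type comparison $d^{m+1}\le d^m/(1+\tau c\,d^m)$ with error terms of relative size $\tau/\ell=\frac{TN}{LM}$. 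This is exactly where \eqref{e:CFL-extra} enters: the errors vanish only if $N_n/M_n\to0$. In the limit one obtains $\partial_x v(\rho)\ge -1/(ct)$ in $\mathcal D'$, the Oleĭnik condition. For genuinely nonlinear $f$ this selects the entropy solution. Since only $v'<0$ is assumed, I would instead run the argument on the Lagrangian equation $\partial_tR+R^2v'(R)\partial_zR=0$, whose flux is monotone, or pass to discrete Kruzhkov inequalities for the monotone implicit scheme in Lagrangian variables: monotonicity of \eqref{e:map} gives a Crandall--Tartar/Crandall--Majda type monotone scheme. The remaining step is to transfer the entropy inequality back to Eulerian coordinates. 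Uniqueness of entropy solutions then upgrades subsequence convergence to convergence of the whole sequence.
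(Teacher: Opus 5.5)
Your compactness step is fine. Your weak-consistency step is a workable alternative to the paper's: you use discrete summation by parts in $m$, where the paper interpolates the trajectories linearly in time. This works provided the cell remainders of size $\ell_n(x_{i+1}^m-x_i^m)$ are telescoped in $m$. If you bound them crudely and sum over the $M_n$ steps, they give $O(\ell_n/\tau_n)$, which the CFL condition bounds from below, not above.

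The genuine gap is the entropy step. A one-sided Ole\u{\i}nik bound on $\partial_x v(\rho)$ cannot hold under the theorem's hypotheses. Only $v\in C^1$, $v'<0$ is assumed, so $f(\rho)=\rho\,v(\rho)$ may have inflection points. For example, $v(\rho)=e^{-a\rho}$ with $aR>2$ makes $f$ concave on $[0,2/a]$ and convex on $[2/a,R]$. Then entropy shocks across which $v(\rho)$ jumps down and shocks across which it jumps up both occur. So no one-sided bound on $\partial_x v(\rho)$, in either direction, holds for the entropy solution, and none can be proved for the scheme. Even for concave $f$ your inequality points the wrong way. Across an admissible LWR shock $\rho$ jumps up, so $v(R_i^m)-v(R_{i+1}^m)$ is of order one on a cell of width $O(\ell_n)$. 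The correct condition is $\partial_x v(\rho)\leqslant C/t$, not $\partial_x v(\rho)\geqslant -C/t$.

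Your fallback (a monotone scheme in Lagrangian variables, then ``transfer back'') leaves the whole difficulty in the unproved transfer, and that is exactly where vacuum matters. The conserved Lagrangian quantity is the specific volume $1/R$, which becomes a measure wherever $\overline{\rho}$ vanishes inside its support. Moreover, the Eulerian inequalities with $k>0$ also involve the vacuum half-lines $\{x<x_0(t)\}$ and $\{x>x_{N_n}(t)\}$, which the Lagrangian picture does not see.

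The missing idea is that neither an Ole\u{\i}nik bound nor a Lagrangian detour is needed. The paper verifies the Kruzhkov inequalities directly for the Eulerian reconstruction:
\begin{itemize}
\item Interpolate the trajectories linearly in time and set $R_i(t)=\ell_n/(x_{i+1}(t)-x_i(t))$.
\item Replace $\rho_n$ by this time-continuous reconstruction. The errors are of order $\tau_n$ and $M_nN_n\tau_n^2\sim\tau_n/\ell_n$; this is where \eqref{e:CFL-extra} enters.
\item Integrate by parts in time on each cell and on the two vacuum half-lines.
\end{itemize}
Two facts drive the cancellation: $\dot R_i=-R_i^2(\dot x_{i+1}-\dot x_i)/\ell_n$, and $\dot x_i-v(R_i(t))$ is controlled by $\abs{R_i^{m+1}-R_i^m}+\abs{R_i(t)-R_i^m}$, which is $O(\tau_n/\ell_n)$ after summing in $i$. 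Hence the $\rho$-dependent terms cancel up to $O(\ell_n)+O(\tau_n)+O(\tau_n/\ell_n)$, and what remains is
\[
k\sum_{i=1}^{N_n-1}\int_0^T\varphi(x_i(t),t)\bigl(v(R_i(t))-v(k)\bigr)\bigl(\sgn{R_{i-1}(t)-k}-\sgn{R_i(t)-k}\bigr)\,\d{t}
\]
plus analogous terms at $x_0(t)$ and $x_{N_n}(t)$, with factors $1+\sgn{R_0(t)-k}$ and $1+\sgn{R_{N_n-1}(t)-k}$. Each of these is non-negative solely because $v$ is decreasing. The argument uses neither convexity of $f$ nor a positive lower bound on $\rho$, which is what the theorem requires.
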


\begin{remark}
In principle, it would suffice to show in Theorem~\ref{thm:main} that the limit is an entropy solution.
Indeed, by simply taking $k=0$ and $k=R$ in \eqref{e:entro_ine}, one can deduce that any entropy solution is also a weak solution.
However, the assumptions required to obtain a weak solution are less restrictive than those needed to recover the entropy solution, see \eqref{e:CFL-extra}.
For this reason, it is convenient to treat the two cases separately.
\end{remark}

\section{Well-posedness for the particle scheme and stability estimates}
\label{s:proof_dis}
This section is devoted to the proof of Theorem~\ref{thm:main_dis} and the estimates listed therein.

\subsection{Uniform bound for \texorpdfstring{$R_{i}^{m}$}{}}

In principle, the solution to \eqref{e:t_method_i}-\eqref{e:t_method_ini} may cease to exist after a finite number of iterations if any of the differences $x_{i+1}^{m} - x_{i}^{m}$ fall outside the interval $[\ell/R, +\infty)$. Indeed, in such cases, the quantity $R_{i}^{m}$ given in \eqref{e:Rim} would either be undefined or lie outside the domain of definition $[0,R]$ of the function $v$.
Hence, in order to prove a global existence for all discrete times $t^{m} = m \, \tau$, $m\in\disint{1}{M}$, we need to prove that the quantities $R_{i}^{m}$, $i\in \disint{0}{N-1}$, $m\in \disint{0}{M}$, belong to $[0,R]$, which is the goal of this section.

By assumption the CFL-type condition \eqref{e:CFLtheta} holds, namely
\begin{equation}
\label{e:CFLtheta-bis}
\theta \, N \, T \vLip R^2 < L \, M
\quad\Longleftrightarrow\quad
\left( \theta \vLip R^2 \right) \tau < \ell .
\end{equation}

For future use, we introduce
\begin{equation}
\label{e:dim}
d_{i}^{m} = x_{i+1}^{m}-x_{i}^{m} , \qquad i\in\disint{0}{N-1}.
\end{equation}
Observe that $R_{i}^{m} = \ell/d_{i}^{m}$ for all $i\in\disint{0}{N-1}$.

\begin{lem}
\label{l:Ri0esti}
$R_{i}^0 \in (0,R]$ for all $i \in\disint{0}{N-1}$.
\end{lem}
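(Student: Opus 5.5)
The plan is to work directly from the recursive definition \eqref{e:ini} of the initial positions and show that each gap satisfies $d_i^0=\overline{x}_{i+1}-\overline{x}_i\in[\ell/R,+\infty)$, with $d_i^0$ finite and positive. Since $R_i^0=\ell/d_i^0$, this gives exactly $R_i^0\in(0,R]$.

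\textbf{Step 1: the points are well defined, finite, and cut exact mass $\ell$.} I would argue by induction on $i$. Set $G_i(x)=\int_{\overline{x}_i}^x\overline{\rho}(y)\,\d{y}$. It is continuous and nondecreasing in $x$, with $G_i(\overline{x}_i)=0$. Suppose inductively that $\int_{-\infty}^{\overline{x}_i}\overline{\rho}=i\,\ell$. Then
\[
\lim_{x\to+\infty}G_i(x)=L-i\ell=(N-i)\ell\geqslant \ell\qquad\text{for } i\leqslant N-1 .
\]
Hence the set in \eqref{e:ini} is nonempty and closed, and bounded below by $\overline{x}_i$, so its minimum $\overline{x}_{i+1}$ exists and is finite. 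By continuity and minimality, $G_i(\overline{x}_{i+1})=\ell$ exactly, which closes the induction. The base case holds because $\overline{x}_0=\min\mathrm{supp}(\overline{\rho})$ is finite (compact support) and no mass lies to its left. The case $L=0$ is excluded implicitly, since otherwise $\ell=0$ and the scheme is vacuous.

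\textbf{Step 2: two-sided bound on the gaps.} From $\int_{\overline{x}_i}^{\overline{x}_{i+1}}\overline{\rho}=\ell>0$ we get $d_i^0>0$, so $R_i^0>0$ and $R_i^0$ is finite. From $\overline{\rho}\leqslant R$ a.e. we get
\[
\ell=\int_{\overline{x}_i}^{\overline{x}_{i+1}}\overline{\rho}(y)\,\d{y}\leqslant R\,d_i^0 ,
\]
that is, $d_i^0\geqslant \ell/R$. This gives $R_i^0=\ell/d_i^0\leqslant R$.

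The main point needing care is Step 1. One must check that the minimum is attained and that the cut mass equals $\ell$ exactly, not just at least $\ell$. Both follow from the continuity of $G_i$, which holds because $\overline{\rho}\in L^1$. One must also check that enough mass remains at each step, and this is where $\ell=L/N$ is used. Everything else is immediate.
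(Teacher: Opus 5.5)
Your proof is correct and follows the paper's argument. The paper likewise writes $R_i^0=\ell/(\overline{x}_{i+1}-\overline{x}_i)$ as the average of $\overline{\rho}$ over $[\overline{x}_i,\overline{x}_{i+1}]$, bounds it by $\norm{\overline{\rho}}_{L^\infty}\leqslant R$, and gets positivity of the gap from $\ell>0$; your Step 1 only makes explicit the well-posedness of \eqref{e:ini} and the identity $\int_{\overline{x}_i}^{\overline{x}_{i+1}}\overline{\rho}=\ell$, which the paper uses tacitly. One small precision: for $i=N-1$ the limit of $G_i$ is exactly $\ell$, so nonemptiness of the set in \eqref{e:ini} needs that value to be attained, which holds because $\overline{\rho}$ has compact support and $G_i$ is constant beyond $\max\mathrm{supp}(\overline{\rho})$.
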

\begin{proof}
Fix $i\in\disint{0}{N-1}$.
By \eqref{e:Rim}, the definition of the initial positions \eqref{e:t_method_ini}, \eqref{e:ini}, and \eqref{e:inirho}\textsubscript{2}, we have
\[
R_{i}^0 =
\frac{\ell}{\overline{x}_{i+1}-\overline{x}_{i}} =
\fint_{\overline{x}_{i}}^{\overline{x}_{i+1}} \overline{\rho}(y) \, \d{y} \leqslant
\norm{\overline{\rho}}_{L^\infty} \leqslant R.
\]
At last, by \eqref{e:ini} we have $\overline{x}_{i+1} - \overline{x}_{i} >0$ because $\ell>0$ and $\overline{\rho} \in BV(\R)$ by \eqref{e:inirho}\textsubscript{1}; hence $R_{i}^0 >0$ by \eqref{e:Rim} and \eqref{e:t_method_ini}.
\end{proof}

\begin{lem} \label{l:estiNmu}
For all $m \in\disint{0}{M-1}$ we have
\begin{equation}
\label{e:estiNmu}
0 < R_{N-1}^{m+1} \leqslant R_{N-1}^{m}\leqslant R.
\end{equation}
\end{lem}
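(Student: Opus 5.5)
Since $x_N^{m}$ moves with constant speed $v(0)$, everything reduces to a scalar recursion for the last gap $d_{N-1}^m$. I will argue by induction on $m$, with Lemma~\ref{l:Ri0esti} giving the base case $R_{N-1}^0\in(0,R]$. Assume $R_{N-1}^m\in(0,R]$, so $d_{N-1}^m\geqslant \ell/R$. Subtract \eqref{e:t_method_i} for $i=N-1$ from \eqref{e:t_method_N}. Using $R_N^m=R_N^{m+1}=0$, this gives
\[
d_{N-1}^{m+1} = d_{N-1}^m + \tau\,\theta\left(v(0)-v(R_{N-1}^m)\right) + \tau\left(1-\theta\right)\left(v(0)-v(R_{N-1}^{m+1})\right).
\]
The task is to show that $d_{N-1}^{m+1}$ is well defined and satisfies $d_{N-1}^{m+1}\geqslant d_{N-1}^m$. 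This is exactly $R_{N-1}^{m+1}\leqslant R_{N-1}^m$. Positivity is automatic once $d_{N-1}^{m+1}$ is finite and positive.

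\emph{Existence of the implicit step.} Write $d=d_{N-1}^{m+1}$ and consider
\[
G(d)=d-\tau(1-\theta)\left(v(0)-v(\ell/d)\right),\qquad d\in[\ell/R,+\infty).
\]
As in the Remark, $G$ is $C^1$ with $G'\geqslant 1$, hence strictly increasing with $G(d)\to+\infty$. We must solve
\[
G(d)=A:=d_{N-1}^m+\tau\theta\left(v(0)-v(R_{N-1}^m)\right).
\]
Since $v$ is decreasing, $v(0)-v(\rho)\geqslant 0$ on $[0,R]$, so $A\geqslant d_{N-1}^m$. On the other side,
\[
G(d_{N-1}^m)=d_{N-1}^m-\tau(1-\theta)\left(v(0)-v(R_{N-1}^m)\right)\leqslant d_{N-1}^m\leqslant A.
\]
Monotonicity of $G$ then gives a unique solution $d\geqslant d_{N-1}^m\geqslant \ell/R$.

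\emph{Conclusion of the induction.} From $d\geqslant d_{N-1}^m$ we get $0<R_{N-1}^{m+1}=\ell/d\leqslant R_{N-1}^m\leqslant R$, which is \eqref{e:estiNmu}, and the induction closes. Alternatively, the inequality follows directly from the displayed identity: both $\tau$-terms are nonnegative because $v$ is decreasing and the arguments lie in $[0,R]$.

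\emph{Main point of care.} The only delicate step is ensuring that $v(R_{N-1}^{m+1})$ is evaluated inside $[0,R]$ when $d$ is solved for. I handle this by restricting $G$ to $[\ell/R,\infty)$ and locating the solution via the bracketing $G(d_{N-1}^m)\leqslant A$, rather than assuming a priori that the solution exists. Note that the CFL condition \eqref{e:CFLtheta} is not needed for this last particle, since the explicit part only increases the gap.
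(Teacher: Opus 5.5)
Your proof is correct and follows the paper's argument: induction on $m$ starting from Lemma~\ref{l:Ri0esti}, subtracting the updates of $x_N$ and $x_{N-1}$, and using $v(\rho)\leqslant v(0)$ to get $d_{N-1}^{m+1}\geqslant d_{N-1}^{m}$. The only difference is technical. The paper avoids having to know a priori where $R_{N-1}^{m+1}$ lies by extending $v$ by constants outside $[0,R]$, whereas your bracketing argument for $G$ on $[\ell/R,+\infty)$ proves directly that the implicit step has a solution, which is slightly more careful.
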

\begin{proof}
We extend the definition of $v$ to the entire interval $\R$ by setting $v(\rho) = v(0)$ for all $\rho < 0$ and $v(\rho) = v(R)$ for all $\rho > R$.
By Lemma~\ref{l:Ri0esti}, we know that $0 < R_{N-1}^0\leqslant R$.
Assume now
\[0 < R_{N-1}^{m} \leqslant R
\quad\Longleftrightarrow\quad
d_{N-1}^{m} \geqslant \ell/R
.\]
By \eqref{e:t_method_N} and \eqref{e:t_method_i}\textsubscript{$i = N-1$} we have
\begin{align*}
d_{N-1}^{m+1} & =
x_N^{m+1} - x_{N-1}^{m+1} =
\left( x_N^{m} + \tau \, v(0) \right) - \left( x_{N-1}^{m} + \theta \, \tau \, v\!\left(R_{N-1}^{m}\right) + \left(1-\theta\right) \tau \, v\!\left(R_{N-1}^{m+1}\right) \right) \\
& = d_{N-1}^{m} + \tau \left( v(0) - \theta \, v\!\left(R_{N-1}^{m}\right) - \left(1-\theta\right) v\!\left(R_{N-1}^{m+1}\right) \right) \\
& \geqslant d_{N-1}^{m},
\end{align*}
where the last estimate follows by observing that $v(\rho) \leqslant v(0)$ for any $\rho\in\R$ and recalling that $\theta \in [0,1]$.
As a consequence $d_{N-1}^{m+1} \geqslant d_{N-1}^{m} \geqslant \ell/R > 0$, and therefore $0 < R_{N-1}^{m+1} \leqslant R_{N-1}^{m}\leqslant R$.
\end{proof}

\begin{prop} \label{prop:max_princ}
Assume the CFL-type condition \eqref{e:CFLtheta-bis}.
Then we have
\begin{equation} \label{e:max_princ}
0 < R_{i}^{m}\leqslant R\qquad \hbox{for all $i\in\disint{0}{N-1}$ and $m\in\disint{0}{M}$.}
\end{equation}
\end{prop}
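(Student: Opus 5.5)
We argue by induction on $m$. Within each time step we use a backward induction on $i$, from $i=N-1$ down to $i=0$. This mirrors the order in which the scheme is actually solved. The base case $m=0$ is Lemma~\ref{l:Ri0esti}, and the index $i=N-1$ is handled for every $m$ by Lemma~\ref{l:estiNmu}.

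Assume $0<R_j^m\leqslant R$ for all $j$, and $0<R_{i+1}^{m+1}\leqslant R$. We must show $d_i^{m+1}\geqslant \ell/R$, and also positivity. As in Lemma~\ref{l:estiNmu}, extend $v$ constantly outside $[0,R]$, so that the scheme makes sense a priori. Subtracting \eqref{e:t_method_i} for $i$ from the one for $i+1$ gives
\[
d_i^{m+1} = d_i^m + \theta\tau\left(v(R_{i+1}^m)-v(R_i^m)\right) + (1-\theta)\tau\left(v(R_{i+1}^{m+1})-v(R_i^{m+1})\right).
\]
We then argue by contradiction. Suppose $d_i^{m+1}<\ell/R$, so $R_i^{m+1}>R$ (or the gap is degenerate). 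Then $v(R_i^{m+1})=v(R)\leqslant v(R_{i+1}^{m+1})$, because $R_{i+1}^{m+1}\in(0,R]$ and $v$ is decreasing. So the implicit term is nonnegative and can be dropped:
\[
d_i^{m+1}\geqslant d_i^m+\theta\tau\left(v(R_{i+1}^m)-v(R_i^m)\right).
\]

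The explicit term is the main obstacle, and this is where the CFL condition \eqref{e:CFLtheta-bis} enters. Since $v$ is decreasing and $R_{i+1}^m\leqslant R$, we get $v(R_{i+1}^m)-v(R_i^m)\geqslant v(R)-v(R_i^m)\geqslant -\vLip(R-R_i^m)$. Hence
\[
d_i^{m+1}\geqslant \frac{\ell}{R_i^m}-\theta\tau\vLip\left(R-R_i^m\right).
\]
It suffices that the right-hand side be $\geqslant \ell/R$, i.e.
\[
\frac{\ell(R-R_i^m)}{R\,R_i^m}\geqslant \theta\tau\vLip\left(R-R_i^m\right).
\]
For $R_i^m=R$ this holds trivially. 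Otherwise it reduces to $\ell\geqslant \theta\tau\vLip R\,R_i^m$, which follows from $R_i^m\leqslant R$ and $\theta\vLip R^2\tau<\ell$. This contradicts $d_i^{m+1}<\ell/R$. Hence $d_i^{m+1}\geqslant\ell/R$, which gives $0<R_i^{m+1}\leqslant R$; positivity is automatic since $d_i^{m+1}>0$.

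One subtlety remains. The implicit equation for $x_i^{m+1}$ must have a solution in the admissible region $x\leqslant x_{i+1}^{m+1}-\ell/R$ of \eqref{e:map}. We handle this by working with the extended $v$, for which the map $x\mapsto x-(1-\theta)\tau v(\ell/(x_{i+1}^{m+1}-x))$ is a strictly increasing bijection of $(-\infty,x_{i+1}^{m+1})$ onto a half-line. This yields a unique solution, and the argument above places it in the admissible region, where the extension coincides with $v$. Completing the inductions over $i$ and then over $m$ gives \eqref{e:max_princ}.
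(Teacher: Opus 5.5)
Your argument is correct and is essentially the paper's. It uses the same induction on the first failing time and the rightmost failing index. It drops the implicit term by monotonicity in the same way (since $R_{i+1}^{m+1}\leqslant R<R_i^{m+1}$), and it applies \eqref{e:CFLtheta-bis} to the explicit term. The only difference is in that last step. You bound $v(R_{i+1}^m)$ below by the worst case $v(R)$ and then use the Lipschitz bound. The paper instead writes the explicit increment as $\alpha\,(d_{i+1}^m-d_i^m)$ with $\alpha\in[0,1)$ and concludes from $d_i^{m+1}\geqslant(1-\alpha)\,d_i^m+\alpha\, d_{i+1}^m$.

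Your solvability paragraph has one unchecked step. The increasing map sends $(-\infty,x_{i+1}^{m+1})$ onto $\left(-\infty,x_{i+1}^{m+1}-(1-\theta)\tau v(R)\right)$, so a solution exists only if $x_i^m+\theta\tau v(R_i^m)$ lies below that endpoint. This is not automatic, but it follows from your own estimate. The distance to the endpoint equals $d_i^m+\theta\tau\left(v(R_{i+1}^m)-v(R_i^m)\right)+(1-\theta)\tau\left(v(R_{i+1}^{m+1})-v(R)\right)\geqslant\ell/R$. By monotonicity of the map, this bound even gives $d_i^{m+1}\geqslant\ell/R$ directly, so the contradiction step becomes unnecessary.
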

\begin{proof}
We extend the definition of $v$ to the entire interval $\R$ by setting $v(\rho) = v(0)$ for all $\rho < 0$ and $v(\rho) = v(R)$ for all $\rho > R$.
Observe that \eqref{e:max_princ} is equivalent to have $d_{i}^{m} \geqslant \ell/R$.
Recall that Lemma~\ref{l:Ri0esti} and Lemma~\ref{l:estiNmu} imply that $d_{i}^0 \geqslant \ell/R$ for all $i\in\disint{0}{N-1}$ and $d_{N-1}^{m} \geqslant \ell/R$ for all $m \in \disint{0}{M}$.
Suppose by contradiction that there exists $m+1\in \disint{1}{M}$ such that \eqref{e:max_princ} fails for some $i\in \disint{0}{N-2}$. Without restriction, we assume that $m+1$ is the first discrete time at which \eqref{e:max_princ} fails, and that $i$ is the largest index at which $d_{i}^{m+1} < \ell/R$.
From \eqref{e:t_method_i}, the monotonicity of $v$, and the hypothesis $d_{i}^{m+1} < \ell/R \leqslant d_{i+1}^{m+1}$, it follows that
\begin{equation}
\label{e:d_{i}_theta}
d_{i}^{m+1}  = d_{i}^{m} + \theta \, \tau \left( v\!\left(\ell/d_{i+1}^{m}\right) - v\!\left(\ell/d_{i}^{m}\right) \right) + \left(1-\theta\right) \tau \left( v(\ell/d_{i+1}^{m+1}) - v(\ell/d_{i}^{m+1}) \right)
\geqslant
d_{i}^{m} + \alpha \left(d_{i+1}^{m} - d_{i}^{m}\right),
\end{equation}
where
\[
\alpha = \theta \, \tau \, \frac{v\!\left(\ell/d_{i+1}^{m}\right) - v\!\left(\ell/d_{i}^{m}\right)}{d_{i+1}^{m} - d_{i}^{m}}.
\]
Since $v$ is decreasing, the map $d\mapsto v(\ell/d)$ is increasing and therefore
\[\alpha = \theta \, \tau \, \frac{v\!\left(\ell/d_{i+1}^{m} \right)-v\!\left(\ell/d_{i}^{m} \right)}{d_{i+1}^{m}-d_{i}^{m}} \geqslant 0.\]
We claim that $\alpha<1$.
Indeed, from $R_{i}^{m}, R_{i+1}^{m} \leqslant R$ and \eqref{e:CFLtheta-bis} we have
\begin{equation*}
\alpha =
\theta \, \tau \, \frac{v\!\left(R_{i+1}^{m}\right)-v\!\left(R_{i}^{m}\right)}{\frac{\ell}{R_{i+1}^{m}}-\frac{\ell}{R_{i}^{m}}}
\leqslant
\theta \, \tau \vLip \frac{\abs{R_{i+1}^{m}-R_{i}^{m}}}{\ell \abs{\frac{1}{R_{i+1}^{m}}-\frac{1}{R_{i}^{m}}}}
 =
\theta \, \frac{N \, T}{L \, M} \vLip R_{i}^{m} \, R_{i+1}^{m}
\leqslant
\theta \, \frac{N \, T}{L \, M} \vLip R^2
< 1.
\end{equation*}
Consequently, we have $\alpha \in [0,1)$.
From $\min\{d_{i}^{m}, d_{i+1}^{m} \} \geqslant \ell/R$ and \eqref{e:d_{i}_theta} we obtain
\begin{equation*}
d_{i}^{m+1} \geqslant d_{i}^{m} + \alpha \left(d_{i+1}^{m} - d_{i}^{m}\right)
= d_{i}^{m} \, (1-\alpha) + \alpha \, d_{i+1}^{m} \\
\geqslant \ell/R,
\end{equation*}
which contradicts our assumption $d_{i}^{m+1} < \ell/R$.
\end{proof}

As a consequence of Proposition~\ref{prop:max_princ}, the quantity $R_{i}^{m}$ is uniformly bounded by $R$ and strictly greater than zero.
Therefore, the particle system \eqref{e:t_method_i}-\eqref{e:t_method_ini} may be restarted at all discrete times $t^{m}$, $m\in\disint{0}{M-1}$, because the position of the particle $x_{i}^{m}$ is always far from the boundary of $(-\infty,x_{i+1}^{m})$ at least by the fixed quantity $\ell/R$. Consequently, the discrete density $\rho^M_N(x,t)$ defined in \eqref{e:disdens} is globally defined on $(x,t)\in \R\times [0,T]$ and satisfies the estimate \eqref{e:MP} by construction.

\subsection{\texorpdfstring{$BV$}{}-estimate}

The goal of this subsection is to provide a further uniform estimate on $\rho^M_N$ in some functional space that embeds compactly in $L^1(\R\times[0,T])$. The standard tool in numerical methods for conservation law is to estimate the total variation of $\rho^M_N$ in $x$. For future use, we set
\begin{equation} \label{e:rhoNm}
\rho_N^{m}(x) =
\rho^M_N\!\left(x,t^{m}\right) =
\sum_{i=0}^{N-1}R_{i}^{m} \, \mathbf{1}_{[x_{i}^{m},x_{i+1}^{m})}(x),
\qquad m\in \disint{0}{M}.
\end{equation}
For a fixed $m\in \disint{0}{M}$, the total variation (with respect to the space variable $x$) of $\rho_N^{m}$ is
\[
\TV{\rho_N^{m}} = R_0^{m}+\sum_{i=0}^{N-2} \abs{R_{i+1}^{m}-R_{i}^{m}} + R_{N-1}^{m}.
\]
Before stating our $BV$ estimate, for future use we recover below the discrete evolution law for $R_{i}^{m}$:
\begin{equation}
\label{e:Rdot1}
R_{i}^{m+1}-R_{i}^{m} =
-\tau \, \frac{R_{i}^{m} \, R_{i}^{m+1}}{\ell} \left[ \theta \left(v\!\left(R_{i+1}^{m}\right)-v\!\left(R_{i}^{m}\right)\right)+\left(1-\theta\right) \left(v\!\left(R_{i+1}^{m+1}\right)-v\!\left(R_{i}^{m+1}\right)\right) \right] ,
\end{equation}
for $i\in\disint{0}{N-1}$. We then provide a $BV$-bound for the reconstructed density in \eqref{e:rhoNm}.

\begin{prop} \label{prop:BV_estimate}
Assume the CFL-type condition \eqref{e:CFLtheta-bis} holds. Then,
\[\TV{\rho_N^{m+1}} \leqslant \TV{\rho_N^{m}} \leqslant \TV{\overline{\rho}}\]
for all $m\in \disint{0}{M-1}$.
\end{prop}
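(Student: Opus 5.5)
The plan is to put the $\theta$-scheme for the densities in incremental (Harten-type) form, with nonnegative coefficients on both the explicit and the implicit side, and then perform a discrete $\ell^1$ estimate on the differences. First extend the index range with ghost values: set $R_{-1}^{m}=0$ and recall $R_N^{m}=0$ from \eqref{e:Rim}. With $\Delta_i^{m} = R_{i+1}^{m}-R_{i}^{m}$ for $i\in\disint{-1}{N-1}$, the total variation becomes
\[
\TV{\rho_N^{m}} = \sum_{i=-1}^{N-1}\abs{\Delta_i^{m}} .
\]
One must check that \eqref{e:Rdot1} for $i=N-1$ is consistent with $R_N^{m}=R_N^{m+1}=0$. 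This holds because $x_N$ moves with speed $v(0)=v(R_N)$.

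Next define the difference quotients
\[
q_i^{m} = \frac{v(R_{i+1}^{m})-v(R_i^{m})}{R_{i+1}^{m}-R_i^{m}} \leqslant 0,
\]
with $q_i^m = v'(R_i^m)$ if the two values coincide. Then \eqref{e:Rdot1} reads
\[
R_i^{m+1} = R_i^{m} + A_i\,\Delta_i^{m} + B_i\,\Delta_i^{m+1},
\qquad
A_i = -\theta\,\tau\,\frac{R_i^m R_i^{m+1}}{\ell}\,q_i^m,
\quad
B_i = -(1-\theta)\,\tau\,\frac{R_i^m R_i^{m+1}}{\ell}\,q_i^{m+1},
\]
with $A_i,B_i\geqslant 0$ because $v$ is decreasing. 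By Proposition~\ref{prop:max_princ}, both $R_i^m$ and $R_i^{m+1}$ lie in $(0,R]$, and $\abs{q_i^m}\leqslant \vLip$. Hence $A_i\leqslant \theta\tau\vLip R^2/\ell<1$ by the CFL condition \eqref{e:CFLtheta-bis}. For the ghost indices set $A_{-1}=B_{-1}=0$; the index $N$ only enters through $\Delta_{N-1}$, so no coefficient is needed there.

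Now set $W_i = (1-A_i)R_i^m + A_iR_{i+1}^m = R_i^{m+1}-B_i\Delta_i^{m+1}$, which gives $W_{-1}=0$. Subtracting consecutive indices yields the two identities
\[
(1+B_{i-1})\,\Delta_{i-1}^{m+1} = (W_i-W_{i-1}) + B_i\,\Delta_i^{m+1},
\qquad
W_i-W_{i-1} = (1-A_{i-1})\,\Delta_{i-1}^{m} + A_i\,\Delta_i^{m}.
\]
The step is to take absolute values and sum over $i\in\disint{0}{N-1}$. By the first identity, the terms $B_i\abs{\Delta_i^{m+1}}$ telescope against $B_{i-1}\abs{\Delta_{i-1}^{m+1}}$ on the left. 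This leaves
\[
\sum_{i=-1}^{N-2}\abs{\Delta_i^{m+1}} + (1+B_{N-1})\abs{\Delta_{N-1}^{m+1}} - B_{N-1}\abs{\Delta_{N-1}^{m+1}} \leqslant \sum_i \abs{W_i-W_{i-1}}
\]
(using $B_{-1}=0$), which is the full sum of $\abs{\Delta_i^{m+1}}$ over $i\in\disint{-1}{N-1}$. The implicit term for the last index has to be treated separately: at $i=N-1$ the relation involves $\Delta_{N-1}^{m+1}=-R_{N-1}^{m+1}$ and $W_{N-1}=(1-A_{N-1})R_{N-1}^m$. I would handle it by writing the equation for $R_{N-1}^{m+1}$ directly, or equivalently by Lemma~\ref{l:estiNmu}, which gives $0<R_{N-1}^{m+1}\leqslant R_{N-1}^{m}$.

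On the right-hand side, since $0\leqslant A_i\leqslant 1$, the second identity gives
\[
\sum_i\abs{W_i-W_{i-1}}\leqslant \sum_i\bigl[(1-A_{i-1})+A_{i-1}\bigr]\abs{\Delta_{i-1}^m}=\sum_{i=-1}^{N-1}\abs{\Delta_i^m}.
\]
This proves $\TV{\rho_N^{m+1}}\leqslant\TV{\rho_N^m}$. Iterating the inequality down to $m=0$ then leaves only the comparison $\TV{\rho_N^0}\leqslant\TV{\overline\rho}$. This follows because $\rho_N^0$ is obtained by averaging $\overline\rho$ over the cells $[\overline x_i,\overline x_{i+1})$, which partition $\mathrm{supp}(\overline\rho)$ by \eqref{e:ini}, and cell averaging does not increase total variation (the jump between consecutive averages is bounded by the variation of $\overline{\rho}$ over the union of the two cells).

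The main obstacle is bookkeeping at the two ends of the chain: verifying that the ghost values $R_{-1}=R_N=0$ make \eqref{e:Rdot1} and the telescoping exact, with no leftover boundary term of the wrong sign. Everything else is the nonnegativity of $A_i$, $B_i$ and the bound $A_i<1$ from the CFL condition.
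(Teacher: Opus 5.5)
Your argument is correct in substance, but it is organized differently from the paper's.

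The paper expands $\TV{\rho_N^{m+1}}-\TV{\rho_N^{m}}$ with sign functions, sums by parts and substitutes \eqref{e:Rdot1}. It then checks separately that the left boundary group, the right boundary group and the interior sums are non-positive. At the right end it only needs the monotonicity $R_{N-1}^{m+1}\leqslant R_{N-1}^{m}$ from Lemma~\ref{l:estiNmu}.

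You instead use a Harten-type incremental form with ghost values $R_{-1}=R_N=0$ and the intermediate quantity $W_i$. This splits each time step into two parts:
\begin{itemize}
\item an explicit convex-combination step, which does not increase total variation as soon as $0\leqslant A_i\leqslant 1$ (this is exactly where \eqref{e:CFLtheta-bis} enters);
\item an implicit step, which does not increase total variation for any $B_i\geqslant 0$ because the $B_i$-terms telescope.
\end{itemize}
The analytic input is identical: your $A_i$ and $B_i$ are the paper's coefficients $\tau\,\frac{R_i^mR_i^{m+1}}{\ell}\,\theta\,(-\Delta v_i^m)$ and $\tau\,\frac{R_i^mR_i^{m+1}}{\ell}\,(1-\theta)\,(-\Delta v_i^{m+1})$. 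What your packaging buys is a single computation covering all indices, boundaries included, and a transparent reason why the implicit part needs no CFL restriction.

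The bookkeeping at the right end needs tightening. Summing over $i\in\disint{0}{N-1}$, as you say, only gives $\sum_{j=-1}^{N-2}\abs{\Delta_j^{m+1}}-B_{N-1}\abs{\Delta_{N-1}^{m+1}}\leqslant\sum_{j=-1}^{N-2}\abs{\Delta_j^{m}}+A_{N-1}\abs{\Delta_{N-1}^{m}}$, not your displayed inequality. Instead, sum both identities over $i\in\disint{0}{N}$, with $W_N=0$ and $A_N=B_N=0$. The instance $i=N$ of the first identity is then precisely \eqref{e:Rdot1} at $i=N-1$, i.e.\ $(1+B_{N-1})R_{N-1}^{m+1}=(1-A_{N-1})R_{N-1}^{m}$. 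The choice $A_{-1}=A_N=0$ is what makes your reindexing of $\sum_i A_i\abs{\Delta_i^m}$ exact.

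This relation is not ``equivalent'' to Lemma~\ref{l:estiNmu}. In your bookkeeping the bare inequality $R_{N-1}^{m+1}\leqslant R_{N-1}^{m}$ is too weak; you need the exact identity. You do have it, since you checked that \eqref{e:Rdot1} at $i=N-1$ is consistent with $R_N=0$.

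Finally, your parenthetical reason for $\TV{\rho_N^0}\leqslant\TV{\overline\rho}$ double counts. Bounding each jump by the variation over two adjacent cells only yields $2\,\TV{\overline\rho}$ after summation. A correct argument: mollify $\overline\rho$ and, by the mean value theorem, choose one point per cell at which the mollification equals its cell average. These points are ordered, so no variation is counted twice. Then let the mollification parameter go to zero, using that mollification does not increase total variation.
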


\begin{proof}
Since $\rho_{N}^{0} (\,\cdot\,,0)$ attains only some average values $R_{i}^0 = \fint_{x_{i}^0}^{x_{i+1}^0} \overline{\rho}(y) \, \d{y}$, $i \in \disint{0}{N-1}$, of $\overline{\rho}$, one can easily prove that $\TV{\rho_{N}^{0}} \leqslant \TV{\overline{\rho}}$.
We compute
\begin{align*}
\TV{\rho_N^{m+1}} - \TV{\rho_N^{m}} = &\left(R_0^{m+1} - R_0^{m}\right) + \left(R_{N-1}^{m+1} - R_{N-1}^{m}\right) + \sum_{i=0}^{N-2} \left( \abs{R_{i+1}^{m+1}-R_{i}^{m+1}} - \abs{R_{i+1}^{m}-R_{i}^{m}}\right)
\\ = {}
&\left(R_0^{m+1} - R_0^{m}\right) + \left(R_{N-1}^{m+1} - R_{N-1}^{m}\right)
+ \sum_{i=0}^{N-2} \sgn{R_{i+1}^{m+1}-R_{i}^{m+1}} (R_{i+1}^{m+1}-R_{i}^{m+1})
\\
&- \sum_{i=0}^{N-2} \sgn{R_{i+1}^{m}-R_{i}^{m}} \left(R_{i+1}^{m}-R_{i}^{m}\right)
\\ = {}
&\left(R_0^{m+1} - R_0^{m}\right) + \left(R_{N-1}^{m+1} - R_{N-1}^{m}\right)\\
&+ \sum_{i=0}^{N-2} \sgn{R_{i+1}^{m+1}-R_{i}^{m+1}} [(R_{i+1}^{m+1}-R_{i}^{m+1}) - \left(R_{i+1}^{m} - R_{i}^{m}\right)]
\\
&+\sum_{i=0}^{N-2} [\sgn{R_{i+1}^{m+1}-R_{i}^{m+1}} - \sgn{R_{i+1}^{m} - R_{i}^{m}}]\left(R_{i+1}^{m}-R_{i}^{m}\right)
\end{align*}
and thus
\begin{align*}
\TV{\rho_N^{m+1}} - \TV{\rho_N^{m}} = &\left(R_0^{m+1} - R_0^{m}\right) + \left(R_{N-1}^{m+1} - R_{N-1}^{m}\right)
+ \sum_{i=0}^{N-2} \sgn{R_{i+1}^{m+1}-R_{i}^{m+1}} \left(R_{i+1}^{m+1}-R_{i+1}^{m}\right)
\\
&- \sum_{i=0}^{N-2} \sgn{R_{i+1}^{m+1} - R_{i}^{m+1}} \left(R_{i}^{m+1}-R_{i}^{m}\right)\\
&+\sum_{i=0}^{N-2} \left[\sgn{R_{i+1}^{m+1}-R_{i}^{m+1}} - \sgn{R_{i+1}^{m} - R_{i}^{m}}\right]\left(R_{i+1}^{m}-R_{i}^{m}\right).
\end{align*}
Using summation by parts we get
\begin{align}
\TV{ \rho_N^{m+1}} - \TV{\rho_N^{m}} = {}&
\begin{aligned}[t]
&\left(R_0^{m+1} - R_0^{m}\right) + \left(R_{N-1}^{m+1} - R_{N-1}^{m}\right) \\
&+ \sgn{R_{N-1}^{m+1} - R_{N-2}^{m+1}} \left(R_{N-1}^{m+1} - R_{N-1}^{m}\right) -\sgn{R_1^{m+1} - R_0^{m+1}} \left(R_0^{m+1} - R_0^{m}\right) \\
&+ \left[ \sgn{R_1^{m+1} - R_0^{m+1}} - \sgn{R_1^{m} - R_0^{m}} \right] \left(R_1^{m} - R_0^{m}\right) \\
&+ \sum_{i=1}^{N-2} \left[\sgn{R_{i}^{m+1} - R_{i-1}^{m+1}} -\sgn{R_{i+1}^{m+1}-R_{i}^{m+1}} \right] (R_{i}^{m+1} -R_{i}^{m} ) \\
&+ \sum_{i=1}^{N-2} \left[\sgn{R_{i+1}^{m+1} - R_{i}^{m+1}} - \sgn{R_{i+1}^{m} - R_{i}^{m}} \right] \left(R_{i+1}^{m} - R_{i}^{m}\right) .
\end{aligned}
\label{e:comp_theta_2}
\end{align}
We set
\begin{align*}
A_{N-1} = {}& \left[ 1 + \sgn{R_{N-1}^{m+1} - R_{N-1}^{m+1}} \right] \left(R_{N-1}^{m+1} - R_{N-1}^{m}\right) ,
\\
A_0 = {}& \left[1-\sgn{R_1^{m+1} - R_0^{m+1}} \right] \left(R_0^{m+1} - R_0^{m}\right)+ \left[\sgn{R_1^{m+1} - R_0^{m+1}} - \sgn{R_1^{m} - R_0^{m}} \right] \left(R_1^{m} - R_0^{m}\right) .
\end{align*}
Observe that \eqref{e:estiNmu} implies $R_{N-1}^{m+1} - R_{N-1}^{m} \leqslant 0$ for all $m \in \disint{0}{M-1}$, which ensures that $A_{N-1} \leqslant 0$.
Introduce the notation
\begin{equation*}
\Delta v_{i}^{m} =
\begin{dcases}
\frac{v\!\left(R_{i+1}^{m}\right) - v\!\left(R_{i}^{m}\right)}{R_{i+1}^{m} - R_{i}^{m}}
& \hbox{if } R_{i+1}^{m} \neq R_{i}^{m} ,
\\
v'\!\left(R_{i}^{m}\right) & \hbox{otherwise} ,
\end{dcases}
\qquad i\in\disint{0}{N-2},\ m \in\disint{0}{M}.
\end{equation*}
Observe that $\Delta v_{i}^{m} \leqslant 0$, $i\in\disint{0}{N-2}$, $m \in\disint{0}{M}$, by the monotonicity of $v.$
Using \eqref{e:Rdot1} on the term $R_0^{m+1} - R_0^{m}$, $A_0$ can be rearranged as
\begin{align*}
A_0 = {}&
\left[1-\sgn{R_1^{m+1}-R_0^{m+1}} \right] \left[-\tau \, \frac{R_0^{m} \, R_0^{m+1}}{\ell} \right] \left[\theta \, \left(v\!\left(R_1^{m}\right) - v\!\left(R_0^{m}\right)\right) + \left(1-\theta\right) \left(v\!\left(R_1^{m+1}\right) - v\!\left(R_0^{m+1}\right)\right) \right]
\\
&+\left[\sgn{R_1^{m+1} - R_0^{m+1}} - \sgn{R_1^{m} - R_0^{m}} \right] \left(R_1^{m} - R_0^{m}\right)
\\
= {}&
\left[1-\sgn{R_1^{m+1}-R_0^{m+1}} \right] \left(\tau \, \frac{R_0^{m} \, R_0^{m+1}}{\ell} \right) \left(1-\theta\right) \left( - \Delta v_0^{m+1} \right) (R_1^{m+1} - R_0^{m+1})
\\
&+ \sgn{R_1^{m+1} - R_0^{m+1}} \left[1-\tau \, \frac{R_0^{m} \, R_0^{m+1}}{\ell} \, \theta \left( -\Delta v_0^{m}\right) \right] \left(R_1^{m} - R_0^{m}\right)
\\
&+ \left[ \tau \, \frac{R_0^{m} \, R_0^{m+1}}{\ell} \, \theta \left( -\Delta v_0^{m} \right) - \sgn{R_1^{m} - R_0^{m}} \right] \left(R_1^{m} - R_0^{m}\right).
\end{align*}
The first term in the last right-hand side above is non-positive because
\begin{align*}
&\left[1-\sgn{R_1^{m+1}-R_0^{m+1}} \right] \left(\tau \, \frac{R_0^{m} \, R_0^{m+1}}{\ell} \right) \left(1-\theta\right) \left( -\Delta v_0^{m+1} \right) (R_1^{m+1} - R_0^{m+1})
\\
= {}
&-2\left[R_0^{m+1}-R_1^{m+1} \right]_+ \left(\tau \, \frac{R_0^{m} \, R_0^{m+1}}{\ell} \right) \left(1-\theta\right) \left( - \Delta v_0^{m+1} \right)
\leqslant 0.
\end{align*}
Moreover, since $0\leqslant -\Delta v_0^{m+1} \leqslant \vLip$, by using the CFL condition \eqref{e:CFLtheta-bis} and the result in Proposition~\ref{prop:max_princ}, we deduce that
\[
1-\tau \, \frac{R_0^{m} \, R_0^{m+1}}{\ell} \, \theta \left( -\Delta v_0^{m}\right) \geqslant 1-\tau \, \frac{R^2}{\ell} \, \theta \vLip>0,
\]
and therefore the remaining terms are bounded as follows
\begin{align*}
&\sgn{R_1^{m+1} - R_0^{m+1}} \left[1-\tau \, \frac{R_0^{m} \, R_0^{m+1}}{\ell} \, \theta \left( -\Delta v_0^{m}\right) \right] \left(R_1^{m} - R_0^{m}\right)
\\
&+ \left[ \tau \, \frac{R_0^{m} \, R_0^{m+1}}{\ell} \, \theta \left( - \Delta v_0^{m} \right) - \sgn{R_1^{m} - R_0^{m}} \right] \left(R_1^{m} - R_0^{m}\right) \\
\leqslant{}& \left[1-\tau \, \frac{R_0^{m} \, R_0^{m+1}}{\ell} \, \theta \left( -\Delta v_0^{m}\right) \right] \abs{R_1^{m} - R_0^{m}}
+ \tau \, \frac{R_0^{m} \, R_0^{m+1}}{\ell} \, \theta \left( - \Delta v_0^{m} \right) \abs{R_1^{m} - R_0^{m}} - \abs{R_1^{m} - R_0^{m}}
= 0 .
\end{align*}
Hence, also $A_0\leqslant0$.

We now prove that the last two lines on the right-hand side of \eqref{e:comp_theta_2} are non-positive.
Due to \eqref{e:Rdot1}, we write
\begin{align*}
&\sum_{i=1}^{N-2} \left[\sgn{R_{i}^{m+1} - R_{i-1}^{m+1}}
- \sgn{R_{i+1}^{m+1}-R_{i}^{m+1}} \right](R_{i}^{m+1} -R_{i}^{m} ) \\
&+ \sum_{i=1}^{N-2} \left[\sgn{R_{i+1}^{m+1} - R_{i}^{m+1}} - \sgn{R_{i+1}^{m} - R_{i}^{m}} \right] \left(R_{i+1}^{m} - R_{i}^{m}\right) \\
= {}& \sum_{i=1}^{N-2} \sgn{R_{i+1}^{m+1} - R_{i}^{m+1}} \left( 1 - \tau \, \frac{R_{i}^{m} \, R_{i}^{m+1}}{\ell} \, \theta \left( - \Delta v_{i}^{m} \right) \right) \left(R_{i+1}^{m} - R_{i}^{m}\right) \\
&+ \sum_{i=1}^{N-2} \sgn{R_{i}^{m+1} - R_{i-1}^{m+1}} \, \tau \, \frac{R_{i}^{m} \, R_{i}^{m+1}}{\ell} \, \theta \left( - \Delta v_{i}^{m} \right) \left(R_{i+1}^{m} - R_{i}^{m}\right) \\
&- \sum_{i=1}^{N-2} \sgn{R_{i+1}^{m} - R_{i}^{m}} \left(R_{i+1}^{m} - R_{i}^{m}\right) \\
&+ \sum_{i=1}^{N-2} \left[ \left[ \sgn{R_{i}^{m+1} - R_{i-1}^{m+1}} - \sgn{R_{i+1}^{m+1} - R_{i}^{m+1}} \right] \tau \, \frac{R_{i}^{m} \, R_{i}^{m+1}}{\ell} \left(1-\theta\right) \left( - \Delta v_{i}^{m+1} \right) \right] \left(R_{i+1}^{m+1} - R_{i}^{m+1}\right).
\end{align*}
Now, since $0\leqslant -\Delta v_{i}^{m+1} \leqslant \vLip$, the CFL condition \eqref{e:CFLtheta-bis} and the result in Proposition~\ref{prop:max_princ} imply
\[
1 - \tau \, \frac{R_{i}^{m} \, R_{i}^{m+1}}{\ell} \, \theta \left( - \Delta v_{i}^{m} \right) \geqslant 1 - \tau \, \frac{R^2}{\ell} \, \theta \vLip > 0.
\]
Therefore, one easily gets
\begin{align*}
&\sum_{i=1}^{N-2} \sgn{R_{i+1}^{m+1} - R_{i}^{m+1}} \left( 1 - \tau \, \frac{R_{i}^{m} \, R_{i}^{m+1}}{\ell} \, \theta \left( - \Delta v_{i}^{m} \right) \right) \left(R_{i+1}^{m} - R_{i}^{m}\right) \\
&+ \sum_{i=1}^{N-2} \sgn{R_{i}^{m+1} - R_{i-1}^{m+1}} \, \tau \, \frac{R_{i}^{m} \, R_{i}^{m+1}}{\ell} \, \theta \left( - \Delta v_{i}^{m} \right) \left(R_{i+1}^{m} - R_{i}^{m}\right)
- \sum_{i=1}^{N-2} \sgn{R_{i+1}^{m} - R_{i}^{m}} \left(R_{i+1}^{m} - R_{i}^{m}\right) \\
\leqslant{}& \sum_{i=1}^{N-2} \left[
\left( 1 - \tau \, \frac{R_{i}^{m} \, R_{i}^{m+1}}{\ell} \, \theta \left( - \Delta v_{i}^{m} \right) \right) \abs{R_{i+1}^{m} - R_{i}^{m}}
+ \tau \, \frac{R_{i}^{m} \, R_{i}^{m+1}}{\ell} \, \theta \left( - \Delta v_{i}^{m} \right) \abs{R_{i+1}^{m} - R_{i}^{m}}
- \abs{R_{i+1}^{m} - R_{i}^{m}}
\right] = 0.
\end{align*}
Furthermore, we have
\[
\sum_{i=1}^{N-2} \left\{ \left[ \sgn{R_{i}^{m+1} - R_{i-1}^{m+1}} - \sgn{R_{i+1}^{m+1} - R_{i}^{m+1}} \right] \tau \, \frac{R_{i}^{m} \, R_{i}^{m+1}}{\ell} \left(1-\theta\right) \left( - \Delta v_{i}^{m+1} \right) \right\} \left(R_{i+1}^{m+1} - R_{i}^{m+1}\right)
\leqslant 0.
\]
This completes the proof.
\end{proof}

\begin{remark}
\emph{
In the implicit case $\theta = 0$, it holds that $\TV{\rho_N^{m+1}} \leqslant \TV{\rho_N^{m}}$ with no need of assuming any CFL-type condition. This is clear from the fact that \eqref{e:CFL} in that case is trivially satisfied.
}
\end{remark}

\subsection{Time oscillations and compactness}

We recall some tools from transport distances on probability measures. Let $\rho\in L^1(\R)\cap \mathcal{P}_c(\R)$, where $\mathcal{P}_c(\R)$ is the space of probability measures on $\R$ with compact support. We set $F_\rho \colon \R\rightarrow [0,L]$ as
\[F_\rho(x) = \int_{-\infty}^x \rho(y) \, \d{y}\]
and $X_\rho \colon (0,L)\rightarrow \R$ as the pseudo-inverse of $F_\rho$, given by
\[X_\rho(z) = \inf\{x\in \R\,:\, F_\rho(x)\geqslant z\} .\]
Given $\mu,\eta\in L^1(\R)\cap \mathcal{P}_c(\R)$, we recall (see \cite{villani_optimal} and \cite[(18)]{DiFrancescoRosini_ARMA}) that the scaled $1$-Wasserstein distance between $\mu$ and $\eta$ may be computed as
\[d_1(\rho,\eta) = \norm{X_\rho-X_\eta}_{L^1(0,L)} .\]

For notation simplicity, we define
\begin{align*}
F_N^{m} & = F_{\rho_N^{m}},&
X_N^{m} & = X_{\rho_N^{m}}.
\end{align*}

\begin{lem} \label{lem:representation}
For all $m\in \disint{0}{M}$, we have
\[X_N^{m}(z) = \sum_{i=0}^{N-1} \left(x_{i}^{m}+\frac{1}{R_{i}^{m}} \left(z-i \, \ell\right)\right) \mathbf{1}_{[i \, \ell,(i+1)\ell)} (z).\]
\end{lem}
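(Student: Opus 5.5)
The plan is to compute the cumulative distribution $F_N^{m}$ of the piecewise constant density $\rho_N^{m}$ in \eqref{e:rhoNm} explicitly, and then invert it interval by interval.

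First, by Proposition~\ref{prop:max_princ} we have $R_{i}^{m}\in(0,R]$, so the particles are strictly ordered: $x_0^{m}<x_1^{m}<\dots<x_N^{m}$ with $d_i^{m}=\ell/R_i^{m}>0$. Since $\rho_N^{m}$ equals $R_i^{m}$ on $[x_i^{m},x_{i+1}^{m})$ and vanishes outside $[x_0^{m},x_N^{m})$, we get $F_N^{m}(x)=0$ for $x\leqslant x_0^{m}$ and $F_N^{m}(x)=L$ for $x\geqslant x_N^{m}$. Each cell carries mass $R_i^{m}(x_{i+1}^{m}-x_i^{m})=\ell$, hence $F_N^{m}(x_i^{m})=i\,\ell$. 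It follows that
\[
F_N^{m}(x)=i\,\ell+R_i^{m}\left(x-x_i^{m}\right),\qquad x\in[x_i^{m},x_{i+1}^{m}],
\]
with $L=N\ell$ consistent with \eqref{e:tau-ell-L}.

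Next, $F_N^{m}$ is continuous and strictly increasing on $[x_0^{m},x_N^{m}]$, because its slope there is $R_i^{m}>0$. Hence for $z\in(0,L)$ the pseudo-inverse $X_N^{m}(z)=\inf\{x: F_N^{m}(x)\geqslant z\}$ coincides with the true inverse of the restriction of $F_N^{m}$ to $[x_0^{m},x_N^{m}]$. The infimum lies in this interval, since $F_N^{m}=0<z$ to the left of $x_0^{m}$.

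Finally, for $z\in[i\ell,(i+1)\ell)$ we solve $i\ell+R_i^{m}(x-x_i^{m})=z$. This gives $x=x_i^{m}+(z-i\ell)/R_i^{m}$, which lies in $[x_i^{m},x_{i+1}^{m})$. This is the claimed formula; the value on the null set $z=0$ is irrelevant.

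The only delicate point is strict positivity of the $R_i^{m}$. It guarantees there are no vacuum gaps inside $[x_0^{m},x_N^{m}]$, so the pseudo-inverse has no jumps there. This positivity is already supplied by Proposition~\ref{prop:max_princ}, so no step should be a real obstacle.
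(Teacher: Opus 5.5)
Your proposal is correct and follows the same approach as the paper: compute $F_N^{m}$ explicitly as the continuous piecewise linear function $i\,\ell+R_i^{m}(x-x_i^{m})$ on $[x_i^{m},x_{i+1}^{m})$, then invert it cell by cell. Your justification that the pseudo-inverse coincides with the true inverse, using $R_i^{m}>0$ from Proposition~\ref{prop:max_princ}, makes explicit what the paper's ``imposing continuity'' step leaves implicit.
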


\begin{proof}
From the definition of $\rho_N^{m}$ in \eqref{e:rhoNm}, we deduce that
\[F_N^{m}(x) = \sum_{i=0}^{N-1} \left(i \, \ell + R_{i}^{m}\left(x-x_{i}^{m}\right) \vphantom{\frac{1}{R_{i}^{m}}} \right) \mathbf{1}_{[x_{i}^{m},x_{i+1}^{m})}(x).\]
Since $F_N^{m}$ is piecewise linear, so is its pseudo-inverse $X_N^{m}$. Moreover, on each interval $(i \, \ell,(i+1) \ell)$ the slope of $X_N^{m}$ is the inverse of the slope of $F_N^{m}$ on the interval $(x_{i}^{m},x_{i+1}^{m})$. Imposing the continuity of $X_N^{m}$ at the points of discontinuity of its derivative implies the assertion.
\end{proof}

We start our compactness argument by estimating time-oscillations computing the $1$-Wasserstein distance between the linear-in-time interpolation at two separate times.

\begin{prop} \label{prop:time}
For all $0\leqslant t_1 \leqslant t_2 \leqslant T$ we have
\[ d_1\!\left(\rho_N^M(\,\cdot\,,t_2),\rho_N^M(\,\cdot\,,t_1)\right) \leqslant 4 L \, V \left(t_2-t_1+\tau\right).\]
\end{prop}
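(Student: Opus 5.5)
Since $\rho^M_N(\,\cdot\,,t)$ is piecewise constant in time, I will first reduce to the discrete times. Given $t_1\leqslant t_2$ with $t_1\in[t^{m_1},t^{m_1+1})$ and $t_2\in[t^{m_2},t^{m_2+1})$, we have $\rho^M_N(\,\cdot\,,t_j)=\rho_N^{m_j}$, with the convention that at $t=T$ we use $m_2=M$. Moreover $(m_2-m_1)\tau\leqslant t_2-t_1+\tau$. It therefore suffices to show that
\[
d_1\!\left(\rho_N^{m+k},\rho_N^{m}\right)\leqslant 4L\,V\,k\,\tau ,
\]
and by the triangle inequality for $d_1$ this reduces to the one-step estimate $d_1(\rho_N^{m+1},\rho_N^{m})\leqslant 4 L V\tau$. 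In fact I expect to obtain the better constant $2LV$.

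For the one-step estimate I will use the representation of Lemma~\ref{lem:representation} together with the formula $d_1=\norm{X_N^{m+1}-X_N^{m}}_{L^1(0,L)}$. On each interval $[i\ell,(i+1)\ell)$, the difference $X_N^{m+1}(z)-X_N^{m}(z)$ is affine in $z$. At $z=i\ell$ it equals $x_i^{m+1}-x_i^m$. As $z\to(i+1)\ell$ it tends to
\[
\bigl(x_i^{m+1}+d_i^{m+1}\bigr)-\bigl(x_i^m+d_i^m\bigr)=x_{i+1}^{m+1}-x_{i+1}^m ,
\]
because $\ell/R_i^m=d_i^m$. The absolute value of an affine function is bounded by the maximum of its endpoint values. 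Hence the $L^1$ norm on that interval is at most $\ell\max\{\abs{x_i^{m+1}-x_i^m},\abs{x_{i+1}^{m+1}-x_{i+1}^m}\}$.

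By \eqref{e:t_method_i} and \eqref{e:V}, every particle satisfies $\abs{x_j^{m+1}-x_j^m}\leqslant \tau(\theta V+(1-\theta)V)=\tau V$. This holds also for $j=N$ by \eqref{e:t_method_N}. Here it matters that, by Proposition~\ref{prop:max_princ}, all the $R_j^m$ stay in $[0,R]$, so that $v$ is evaluated only inside its domain and is bounded by $V$. Summing over $i$ gives
\[
d_1\!\left(\rho_N^{m+1},\rho_N^m\right)\leqslant N\ell\,\tau V=L V\tau ,
\]
which is well within $4LV\tau$.

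Chaining the steps then gives $d_1(\rho^M_N(\,\cdot\,,t_2),\rho^M_N(\,\cdot\,,t_1))\leqslant LV(m_2-m_1)\tau\leqslant 4LV(t_2-t_1+\tau)$. The only delicate points are the bookkeeping of the indices $m_1,m_2$ relative to $t_1,t_2$, including the endpoint $t=T$, and justifying the $L^1$ identity for $d_1$ in the paper's scaled convention with total mass $L$. Both are routine. I do not expect a genuine obstacle: the key input, the uniform velocity bound, is already supplied by the maximum principle of Proposition~\ref{prop:max_princ}.
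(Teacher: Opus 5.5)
Your proof is correct and follows essentially the paper's route: the quantile representation of Lemma~\ref{lem:representation}, the identity $d_1=\norm{X_N^{m_2}-X_N^{m_1}}_{L^1(0,L)}$, and the displacement bound $\abs{x_i^{m+1}-x_i^m}\leqslant \tau V$ coming from Proposition~\ref{prop:max_princ}. The differences are minor. Telescoping one step at a time is unnecessary, since your affine endpoint argument applies directly to $X_N^{m_2}-X_N^{m_1}$. Bounding each cell by $\ell\max\{\cdot,\cdot\}$ via convexity, rather than splitting into a position term and a gap term and using $(N+1)\ell\leqslant 2L$ as the paper does, yields the sharper constant $LV$ in place of $4LV$.
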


\begin{proof}
Fix $0\leqslant t_1 \leqslant t_2 \leqslant T$ and let $0\leqslant m_1 \leqslant m_2 \leqslant M$ be such that
\[t_1\in [t^{m_1},t^{m_1+1}),\qquad t_2\in [t^{m_2},t^{m_2+1}).\]
We use Lemma~\ref{lem:representation} to compute
\begin{align*}
& d_1\!\left(\rho_N^M(\,\cdot\,,t_2),\rho_N^M(\,\cdot\,,t_1)\right) =
d_1\!\left(\rho_N^{m_2},\rho_N^{m_1}\right) =
\norm{X_N^{m_2}-X_N^{m_1}}_{L^1(0,L)}
\\ = {}&
\sum_{i=0}^{N-1} \int_{i \, \ell}^{(i+1) \ell} \abs{x_{i}^{m_2}-x_{i}^{m_1}+\left(\frac{1}{R_{i}^{m_2}}-\frac{1}{R_{i}^{m_1}} \right)\left(z-i \, \ell\right)} \d{z}
\leqslant
\sum_{i=0}^{N-1} \left[\ell \abs{x_{i}^{m_2}-x_{i}^{m_1} } + \frac{\ell^2}{2} \abs{\frac{1}{R_{i}^{m_2}}-\frac{1}{R_{i}^{m_1}} }\right]\\
= {}& \sum_{i=0}^{N-1} \left[\ell \abs{x_{i}^{m_2}-x_{i}^{m_1} } + \frac{\ell}{2} \abs{(x^{m_2}_{i+1}-x^{m_2}_{i})-(x^{m_1}_{i+1}-x^{m_1}_{i})}\right]
\leqslant 2\ell \sum_{i=0}^{N} \abs{x_{i}^{m_2}-x_{i}^{m_1} }.
\end{align*}
Using the scheme \eqref{e:t_method_i}, we have
\begin{align*}
&x_{i}^{m_2}-x_{i}^{m_1} = \sum_{m = m_1}^{m_2-1} \left(x_{i}^{m+1}-x_{i}^{m}\right) =
\begin{dcases}
\tau\sum_{m = m_1}^{m_2-1} \left(\theta \, v\!\left(R_{i}^{m}\right)+\left(1-\theta\right) v\!\left(R_{i}^{m+1}\right)\right) & \hbox{if $i\in\disint{0}{N-1}$} ,\\
\tau (m_2-m_1)v(0) & \hbox{if $i = N$} .
\end{dcases}
\end{align*}
Hence, thanks to \eqref{e:MP} we have the estimate
\[\abs{ x_{i}^{m_2}-x_{i}^{m_1} } \leqslant V \, \tau \left(m_2-m_1\right)\]
with $V$ given in \eqref{e:V} and by \eqref{e:tau-ell-L} we get
\[
d_1\!\left(\rho_N^M(\,\cdot\,,t_2),\rho_N^M(\,\cdot\,,t_1)\right) \leqslant
2 \ell \, V \, \tau \left(m_2-m_1\right) (N+1) \leqslant
4 L \, V \left(t^{m_2}-t^{m_1}\right) \leqslant
4 L \, V \left(t_2-t_1+\tau\right).
\]
This concludes the proof.
\end{proof}

Next we improve the above estimate by computing the $L^1$-difference between $\rho^M_N(\,\cdot\,,t_2)$ and $\rho^M_N(\,\cdot\,,t_1)$ at two different times $0\leqslant t_1< t_2\leqslant T$.

\begin{cor}
For all $0\leqslant t_1 < t_2 \leqslant T$, we have
\[\norm{\rho^M_N(\,\cdot\,,t_2) - \rho^M_N(\,\cdot\,,t_1)}_{L^1(\R)} \leqslant 4 \left(2\TV{\overline{\rho}} L \, V\right)^{1/2} (t_2-t_1+\tau)^{1/2} .\]
\end{cor}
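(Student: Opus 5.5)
The plan is to interpolate between the $1$-Wasserstein bound of Proposition~\ref{prop:time} and the uniform $BV$ bound of Proposition~\ref{prop:BV_estimate}. The technical lemma in Appendix~\ref{sec:appendix} is meant for exactly this. The inequality I expect to use (or reprove) has the form
\[
\norm{\mu-\eta}_{L^1(\R)} \leqslant C_0 \left(\max\{\TV{\mu},\TV{\eta}\}\; d_1(\mu,\eta)\right)^{1/2}
\]
for nonnegative $\mu,\eta\in L^1\cap BV$ with the same mass $L$, where $C_0$ is an explicit constant.

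To prove this interpolation inequality I would mollify. Fix a standard even mollifier $\omega_\varepsilon$ supported in $[-\varepsilon,\varepsilon]$ and split
\[
\norm{\mu-\eta}_{L^1}\leqslant \norm{\mu-\mu*\omega_\varepsilon}_{L^1}+\norm{\eta-\eta*\omega_\varepsilon}_{L^1}+\norm{(\mu-\eta)*\omega_\varepsilon}_{L^1}.
\]
The first two terms are bounded by $\varepsilon\,\TV{\mu}$ and $\varepsilon\,\TV{\eta}$, using the standard estimate $\norm{u-u*\omega_\varepsilon}_{L^1}\leqslant \varepsilon\,\TV{u}$.

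For the third term I would use that, in one dimension, $\mu-\eta=\partial_x(F_\mu-F_\eta)$. This gives $(\mu-\eta)*\omega_\varepsilon=(F_\mu-F_\eta)*\omega_\varepsilon'$, hence
\[
\norm{(\mu-\eta)*\omega_\varepsilon}_{L^1}\leqslant \norm{\omega_\varepsilon'}_{L^1}\norm{F_\mu-F_\eta}_{L^1}\leqslant \frac{c}{\varepsilon}\norm{F_\mu-F_\eta}_{L^1}.
\]
In one dimension $\norm{F_\mu-F_\eta}_{L^1(\R)}=\norm{X_\mu-X_\eta}_{L^1(0,L)}=d_1(\mu,\eta)$, because both quantities equal the area between the two monotone graphs. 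Optimizing over $\varepsilon$, with $\varepsilon^2\sim d_1/\TV{}$, then yields the square-root interpolation.

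With the inequality in hand, I would apply it to $\mu=\rho^M_N(\cdot,t_2)$ and $\eta=\rho^M_N(\cdot,t_1)$. Both have mass $L$ because mass is conserved: $\int\rho_N^m=\sum_i R_i^m d_i^m=N\ell=L$. Proposition~\ref{prop:BV_estimate} gives total variation at most $\TV{\overline{\rho}}$, and Proposition~\ref{prop:time} gives $d_1\leqslant 4LV(t_2-t_1+\tau)$. Then
\[
\norm{\rho^M_N(\cdot,t_2)-\rho^M_N(\cdot,t_1)}_{L^1}\leqslant C_0\left(4\,\TV{\overline{\rho}}\,L\,V\,(t_2-t_1+\tau)\right)^{1/2}.
\]

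The remaining bookkeeping is to make sure the constants match $4(2\TV{\overline{\rho}}LV)^{1/2}$. This requires $C_0=2\sqrt2$. That value comes out of the optimization if one uses a mollifier with $\norm{\omega_\varepsilon'}_{L^1}=2/\varepsilon$, which holds for monotone profiles on each half-line, for instance the normalized indicator/hat choice, so that the sum becomes $2\varepsilon\,\TV{}+\frac{2}{\varepsilon}\,d_1$. Minimizing this gives $4\sqrt{\TV{}\,d_1}$, and then $4\sqrt{\TV{}\cdot 4LV(\cdot)}$ is larger than what is claimed. So I would instead tune the split by using the sharper $\varepsilon/2$-type bound for the mollification error, or simply invoke the appendix lemma with its stated constant.

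The main obstacle is this constant bookkeeping, not the structure of the argument. The square-root rate and the dependence on $\TV{\overline{\rho}}$, $L$, $V$ follow directly from the two earlier propositions.
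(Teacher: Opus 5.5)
Your plan is correct, and at the level of the corollary it is the paper's argument. The paper simply combines Proposition~\ref{prop:time}, Proposition~\ref{prop:BV_estimate} and Lemma~\ref{lem:appendix}. With the lemma's constant this gives $2\,(2\TV{\overline{\rho}})^{1/2}\,(4LV(t_2-t_1+\tau))^{1/2}=4\,(2\TV{\overline{\rho}}\,L\,V)^{1/2}(t_2-t_1+\tau)^{1/2}$ exactly. Your check that both densities have mass $L$ is a point the paper leaves implicit; it is needed because the lemma is stated for probability measures while the paper uses the scaled $d_1$.

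Where you differ is the proof of the interpolation inequality. The paper applies H\"ormander's inequality $\norm{u'}_{L^1}\leqslant 2\norm{u''}_{L^1}^{1/2}\norm{u}_{L^1}^{1/2}$ to $u=F_\varepsilon-G_\varepsilon$ built from mollified data, and then removes the mollification via Fatou and $\norm{\xi_\varepsilon*(F-G)}_{L^1}\leqslant\norm{F-G}_{L^1}$. You instead keep $\varepsilon$ as a free parameter in a three-term split and optimize over it. Your route is more elementary and needs no external inequality.

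The constant bookkeeping you were unsure about does close. For the box kernel $\omega_\varepsilon=\frac{1}{2\varepsilon}\mathbf{1}_{[-\varepsilon,\varepsilon]}$, the measure $\omega_\varepsilon'$ has total variation $1/\varepsilon$, not $2/\varepsilon$; the value $2/\varepsilon$ belongs to the hat kernel. Indeed $(\mu-\eta)*\omega_\varepsilon(x)=\frac{1}{2\varepsilon}[(F_\mu-F_\eta)(x+\varepsilon)-(F_\mu-F_\eta)(x-\varepsilon)]$, which has $L^1$ norm at most $d_1/\varepsilon$. So even your crude bound gives $\varepsilon(\TV{\mu}+\TV{\eta})+d_1/\varepsilon$. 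Its minimum over $\varepsilon$ is exactly the lemma's $2((\TV{\mu}+\TV{\eta})\,d_1)^{1/2}$. Using the sharper value $\int\abs{y}\,\omega_\varepsilon(y)\,\d{y}=\varepsilon/2$ improves the factor $2$ to $\sqrt{2}$.

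Your own proposed fix also works. The hat kernel has $\int\abs{y}\,\omega_\varepsilon(y)\,\d{y}=\varepsilon/3\leqslant\varepsilon/2$. With the $\varepsilon/2$ bound this gives $\varepsilon\max\{\TV{\mu},\TV{\eta}\}+2d_1/\varepsilon$, whose minimum yields exactly $C_0=2\sqrt{2}$, as required.
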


\begin{proof}
The proof follows easily by Proposition~\ref{prop:time}, Lemma~\ref{lem:appendix} and Proposition~\ref{prop:BV_estimate}.
\end{proof}

\section{Convergence to entropy solution}
\label{s:proof_main}
This subsection is devoted to the proof of Theorem~\ref{thm:main}.
\subsection{Compactness}

In the next Proposition we show the convergence (up to a subsequence) of the scheme.

\begin{prop}
\label{prop:convergence}
Consider a velocity function $v \colon [0,R] \to \R$ and an initial density $\overline{\rho} \colon \R \to [0,R]$ satisfying \eqref{e:vel} and \eqref{e:inirho}, respectively.
Fix a time horizon $T>0$, a parameter $\theta \in [0,1]$ and two monotone sequences of positive integers $(M_n)_n,(N_n)_n\in\N$ satisfying the CFL-type condition \eqref{e:CFL}
\[
\theta \, N_n \, T \vLip R^2 < L \, M_n.
\]
In this case, the sequence of discrete densities $(\rho_n)_n$ defined in \eqref{e:rn} converges up to a subsequence to some limit $\rho\in L^\infty_c(\R\times [0,T])$ strongly in $L^1(\R\times[0,T])$, and for all $t\in[0,T]$ it holds
\begin{align}
\label{e:SleepToken}
&\norm{\rho(\,\cdot\,,t)}_{L^\infty(\R)} \leqslant R,&
&\TV{\rho(\,\cdot\,,t)} \leqslant \TV{\overline{\rho}}.
\end{align}
\end{prop}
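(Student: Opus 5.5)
The plan is to apply a Helly-type / Aubin–Lions compactness theorem for functions of $(x,t)$ with values in $L^1(\R)$, using the uniform estimates of Theorem~\ref{thm:main_dis}. A convenient version is the generalized Aubin–Lions lemma of Rossi–Savaré, as in \cite{DiFrancescoRosini_ARMA}. An equivalent direct route is a Kolmogorov–Riesz–Fréchet argument in $L^1(\R\times[0,T])$. I would take the direct route, since all ingredients are already available.

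\textbf{Step 1: uniform bounds.} By \eqref{e:MP}, $0\leqslant\rho_n\leqslant R$. By \eqref{e:TV}, $\TV{\rho_n(\cdot,t)}\leqslant\TV{\overline{\rho}}$ for every $t$, since $\rho_n(\cdot,t)=\rho_N^m$ for $t\in[t^m,t^{m+1})$.

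\textbf{Step 2: uniform compact support.} By \eqref{e:ini}, the initial particles lie in $\mathrm{supp}(\overline{\rho})\subset[-K,K]$. By \eqref{e:t_method_i} and \eqref{e:MP}, each particle moves at speed at most $V$. Hence $\mathrm{supp}\,\rho_n(\cdot,t)\subset[-K-VT,K+VT]$ for all $n$ and all $t\in[0,T]$.

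\textbf{Step 3: equicontinuity of translations.} Spatial translations are controlled by the BV bound:
\[
\int_0^T\norm{\rho_n(\cdot+h,t)-\rho_n(\cdot,t)}_{L^1}\d t\leqslant T\,|h|\,\TV{\overline{\rho}}.
\]
Time translations are controlled by \eqref{e:lipL1t}:
\[
\int_0^{T-s}\norm{\rho_n(\cdot,t+s)-\rho_n(\cdot,t)}_{L^1}\d t\leqslant TC(s+\tau_n)^{1/2}.
\]
Here lies the main obstacle: this time bound is not uniform as $s\to0$ unless $\tau_n\to0$.

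\textbf{Resolving the obstacle.} The CFL condition \eqref{e:CFL} bounds $N_n/M_n$ only when $\theta>0$, and it says nothing when $\theta=0$. I would use that $(M_n)_n$ is monotone, so it is either eventually constant or divergent. If $M_n\to\infty$, then $\tau_n\to0$ and the Kolmogorov criterion applies, since for any $\varepsilon$ only finitely many $n$ need separate treatment. If $M_n$ stays bounded, then eventually $M_n=M$ is fixed and $\rho_n$ is piecewise constant in time on the fixed grid $\{t^m\}$. In that case I would apply Helly's theorem at each of the finitely many times $t^m$, using the bounds of Steps~1 and~2 and a diagonal extraction. Either way a subsequence converges in $L^1(\R\times[0,T])$ to some $\rho$, and $\rho$ is supported in a compact set.

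\textbf{Step 4: properties of the limit.} Passing to a further subsequence gives a.e.\ convergence, which yields $0\leqslant\rho\leqslant R$. The bounds for every $t$ follow once I show $\rho\in C([0,T];L^1)$. Passing to the limit in \eqref{e:lipL1t} gives a $1/2$-Hölder modulus in the case $\tau_n\to0$. For the bounded-$M$ case, $\rho$ is piecewise constant in time, and I would interpret the statement at a.e.\ $t$ and then use the right-continuous representative.

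With continuity in hand, $\rho_n(\cdot,t)\to\rho(\cdot,t)$ in $L^1_{\mathrm{loc}}$ for every $t$. This follows from the equicontinuity in time combined with convergence at a.e.\ $t$, which holds up to a subsequence by Fubini. The bound $\TV{\rho(\cdot,t)}\leqslant\TV{\overline{\rho}}$ then follows from lower semicontinuity of the total variation under $L^1_{\mathrm{loc}}$ convergence, and the bound $\norm{\rho(\cdot,t)}_{L^\infty}\leqslant R$ from a.e.\ convergence. This gives \eqref{e:SleepToken}.
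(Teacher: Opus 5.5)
Your argument is correct but uses a different compactness theorem from the paper. The paper applies Helly's theorem at each rational time, using \eqref{e:MP}, \eqref{e:TV} and the uniform support $[\overline{a}-VT,\overline{b}+VT]$ to upgrade pointwise convergence to $L^1$ convergence, and extracts a diagonal subsequence. It then combines the asymptotic equicontinuity $\limsup_k\norm{\rho_{n_k}(\cdot,t_2)-\rho_{n_k}(\cdot,t_1)}_{L^1}\leqslant C(t_2-t_1)^{1/2}$ from \eqref{e:lipL1t} with a refined Arzelà--Ascoli theorem \cite[Proposition~3.3.1]{AGS}. That route gives at once three things: $L^1(\R)$ convergence of $\rho_{n_k}(\cdot,t)$ for every $t\in[0,T]$, continuity of $t\mapsto\rho(\cdot,t)$ (needed later for Definition~\ref{d:weak}), and \eqref{e:SleepToken} at every time by lower semicontinuity. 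Space-time $L^1$ convergence then follows by dominated convergence in $t$. Your Kolmogorov--Riesz--Fréchet route obtains space-time compactness first from the two translation estimates. You then have to recover the pointwise-in-time information via Fubini and the Hölder modulus, as you do in Step~4. This is slightly more elementary, but it needs that extra step; if you extend by zero outside $[0,T]$, the boundary strips cost only $O(sL)$ by mass conservation. A real advantage of your proposal is the case split on $M_n$. The paper simply asserts $\tau_{n_k}=T/M_{n_k}\to0$, which the stated hypotheses do not imply: the Proposition assumes neither $M_n\to\infty$ nor $N_n\to\infty$, and for $\theta=0$ the CFL condition \eqref{e:CFL} is void. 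Your separate treatment of an eventually constant $M_n$, via Helly on the fixed time grid, covers that situation. The price is a limit that is only piecewise constant in time rather than continuous, which is still enough for the statement as written.
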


\begin{proof}
For any rational time $t\geqslant0$, by \eqref{e:MP} and \eqref{e:TV}, we can apply Helly's theorem, see \cite[Theorem~2.3]{bressan2000hyperbolic}, to construct a subsequence $(\rho_{n_k}(\,\cdot\,,t))_k$ that converges pointwise to some $\rho(\,\cdot\,,t)\in L^1_{loc}(\R)$ satisfying \eqref{e:SleepToken}.
By construction, if $[\overline{a},\overline{b}] = \mathrm{supp}(\overline{\rho})$ then for any $t\in[0,T]$ the support of $\rho_{n_k}(\,\cdot\,,t)$ is contained in $[\overline{a}- V T,\overline{b}+ V T]$, where $V$ is defined in \eqref{e:V}.
Therefore, $(\rho_{n_k}(\,\cdot\,,t))_k$ is dominated by $x \mapsto R \, \mathbf{1}_{[\overline{a}- V T,\overline{b}+ V T]}(x)$.
Hence, the aforementioned pointwise convergence implies compactness in $L^1(\R)$.
Now, for any $0\leqslant t_1 \leqslant t_2 \leqslant T$, from \eqref{e:lipL1t} we obtain
\[
\limsup_{k\rightarrow +\infty} \norm{ \rho_{n_k} (\,\cdot\,,t_2) - \rho_{n_k}(\,\cdot\,,t_1) }_{L^1(\R)} \leqslant C(t_2-t_1)^{1/2}
\]
with $C = 4 \left(2\TV{\overline{\rho}} L \, V\right)^{1/2}$ because $\tau_{n_k} = T/M_{n_k} \to 0$, see \eqref{e:tau-ell-L}\textsubscript{1}. Hence, a refined version of the Arzelà-Ascoli Theorem, see e.g.~\cite[Proposition~3.3.1]{AGS}, implies the assertion.
\end{proof}

\subsection{Convergence to a weak solution}

We are now in the position of proving that the limit $\rho$ obtained in Proposition~\ref{prop:convergence} is a weak solution to \eqref{e:CP} in the sense of Definition~\ref{d:weak}.

For all $i \in \disint{0}{N_n}$, we define the time-continuous interpolated curves $x_i \colon [0,T] \to \R$ as follows
\begin{equation}
\label{e:PinkFloyd}
x_{i}(t) =
\begin{dcases}
x_{i}^{m}+\frac{t-t^{m}}{\tau_n} \left(x_{i}^{m+1}-x_{i}^{m}\right) &\hbox{if } t \in (t^{m},t^{m+1}),\ m\in\disint{0}{M_n-1},
\\
x_i^{m}&\hbox{if } t=t^{m},\ m\in\disint{0}{M_n},
\end{dcases}
\end{equation}
where $\tau_n = T/M_n$.
We observe that by \eqref{e:t_method_i}, for all $m\in \disint{0}{M_n-1}$ and $i\in\disint{0}{N_n}$, the above curves satisfy
\begin{align}
\label{e:x_{i}_minus_x_m}
x_{i}(t)-x_{i}^{m} &= \frac{t-t^{m}}{\tau_n} \left(x_{i}^{m+1}-x_{i}^{m}\right)&
& \hbox{if } t\in [t^{m},t^{m+1}] ,
\\
\label{e:x_dot}
\dot{x}_{i}(t) &= \frac{x_i^{m+1}-x_i^{m}}{\tau_n} = \theta \, v\!\left(R_{i}^{m}\right)+\left(1-\theta\right) v\!\left(R_{i}^{m+1}\right)&
& \hbox{if }t\in (t^{m},t^{m+1}).
\end{align}
In particular, $\dot{x}_{N_n}(t) =v(0)$.
Moreover, we set
\begin{equation}
\label{e:Ri}
R_{i}(t) =
\begin{dcases}
\frac{\ell_n}{x_{i+1}(t)-x_{i}(t)} &\hbox{if } i \in \disint{0}{N_n-1},
\\
0&\hbox{if } i = N_n,
\end{dcases}
\end{equation}
where $\ell_n = L/N_n$.
Observe that $R_i(t^{m}) = R_i^{m}$, $i\in\disint{0}{N_n-1}$, for any $m \in \disint{0}{M_n}$.
Furthermore, by \eqref{e:x_{i}_minus_x_m}, \eqref{e:x_dot} and \eqref{e:Ri}, for all $m\in \disint{0}{M_n-1}$ and $i\in\disint{0}{N_n-1}$
\begin{align}
\label{e:R_i_minus_R_m}
R_{i}(t)-R_{i}^{m} &= -\frac{\tau_n}{\ell_n} \left(\frac{t-t^{m}}{\tau_n} \right) R_{i}(t) \, R_{i}^{m} \left(\dot{x}_{i+1}(t) - \dot{x}_{i}(t)\right)&
& \hbox{if } t\in (t^{m},t^{m+1}) ,
\\
\label{e:R_i_dot}
\dot{R}_{i}(t) &= -\frac{R_{i}(t)^2}{\ell_n} \left(\dot{x}_{i+1}(t) - \dot{x}_{i}(t)\right)&
& \hbox{if } t\in (t^{m},t^{m+1}) .
\end{align}

In the next lemma we list some properties that will be extensively used in the proofs of the next results.

\clearpage
\begin{lem}\,
\begin{enumerate}
\item
For any $m\in \disint{0}{M_n-1}$ and $i\in\disint{0}{N_n}$ we have
\begin{align}
\label{e:x_i_dot_est}
 t\in (t^{m}, t^{m+1}) \Longrightarrow& \abs{\dot{x}_{i}(t)} \leqslant V ,
\\
\label{e:x_i_minus_x_m_est}
t\in [t^{m}, t^{m+1}] \Longrightarrow& \abs{x_{i}(t)-x_{i}^{m}} \leqslant \abs{x_{i}^{m+1}-x_{i}^{m}} \leqslant \tau \, V ,
\end{align}
where $V$ is the constant defined in \eqref{e:V}.
\item
For any $i\in\disint{0}{N_n-1}$ there holds
\begin{equation}
\label{e:bound_R_i}
t\in[0,T] \Longrightarrow 0 < R_i(t) \leqslant R.
\end{equation}
\item
For any $m\in\disint{0}{M_n-1}$ and $i\in\disint{0}{N_n-1}$ there holds
\begin{equation}
\label{e:Ri-Rim_est}
t\in (t^{m}, t^{m+1}) \Longrightarrow \abs{R_{i}(t)-R_{i}^{m}} \leqslant \frac{\tau_n}{\ell_n} \vLip R_{i}(t) \, R_{i}^{m} \left[ \theta \abs{R_{i+1}^{m}-R_{i}^{m}} +\left(1-\theta\right) \abs{R_{i+1}^{m+1}-R_{i}^{m+1}} \right] .
\end{equation}
\item
For any $m\in\disint{0}{M_n-1}$ there holds
\begin{equation}
\label{e:dot_minus_dot}
t\in (t^{m}, t^{m+1}) \Longrightarrow \sum_{i=0}^{N_n-1}\abs{\dot{x}_{i+1}(t) - \dot{x}_i(t)} \leqslant \vLip \TV{\overline{\rho}}.
\end{equation}
\end{enumerate}
\end{lem}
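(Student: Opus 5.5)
Each of the four items follows directly from the explicit form of the interpolated curves \eqref{e:PinkFloyd}, the scheme \eqref{e:t_method_i}, and Proposition~\ref{prop:max_princ}. We treat them in order.

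\emph{Item (1).} By \eqref{e:x_dot}, on $(t^{m},t^{m+1})$ the velocity $\dot x_i(t)$ equals $\theta\, v(R_i^{m})+(1-\theta)\,v(R_i^{m+1})$, or $v(0)$ when $i=N_n$. This is a convex combination of values of $v$ on $[0,R]$, because $R_i^{m},R_i^{m+1}\in(0,R]$ by Proposition~\ref{prop:max_princ}. Hence $\abs{\dot x_i}\leqslant V$. Estimate \eqref{e:x_i_minus_x_m_est} then follows from \eqref{e:x_{i}_minus_x_m}, since $0\leqslant (t-t^{m})/\tau_n\leqslant 1$ and $x_i^{m+1}-x_i^{m}=\tau_n\,\dot x_i$.

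\emph{Item (2).} For fixed $i\leqslant N_n-1$, the gap $d_i(t)=x_{i+1}(t)-x_i(t)$ is the linear interpolation between $d_i^{m}$ and $d_i^{m+1}$. Both endpoints are at least $\ell_n/R$ by Proposition~\ref{prop:max_princ}, so $d_i(t)\geqslant \ell_n/R>0$. Therefore $0<R_i(t)\leqslant R$.

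\emph{Item (3).} Start from \eqref{e:R_i_minus_R_m} and bound $(t-t^{m})/\tau_n\leqslant 1$. Substituting \eqref{e:x_dot} gives
\[
\dot x_{i+1}-\dot x_i=\theta\bigl(v(R_{i+1}^{m})-v(R_i^{m})\bigr)+(1-\theta)\bigl(v(R_{i+1}^{m+1})-v(R_i^{m+1})\bigr).
\]
For $i=N_n-1$ we use $R_{N_n}^{m}=0$ and $v(0)$, which is consistent with \eqref{e:Rim}. The Lipschitz bound on each difference then yields \eqref{e:Ri-Rim_est}.

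\emph{Item (4).} Summing the same identity over $i$ gives
\[
\sum_{i=0}^{N_n-1}\abs{\dot x_{i+1}-\dot x_i}\leqslant \vLip\Bigl[\theta\sum_{i=0}^{N_n-1}\abs{R_{i+1}^{m}-R_i^{m}}+(1-\theta)\sum_{i=0}^{N_n-1}\abs{R_{i+1}^{m+1}-R_i^{m+1}}\Bigr].
\]
With the convention $R_{N_n}^{m}=0$, each inner sum is at most $\TV{\rho_{N_n}^{m}}$ (resp.\ at the level $m+1$). The jump at $x_0$ is simply dropped, which only makes the sum smaller. Proposition~\ref{prop:BV_estimate} bounds both total variations by $\TV{\overline{\rho}}$, and the convex combination gives \eqref{e:dot_minus_dot}.

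The only point requiring care is this last bookkeeping: one must check that the boundary term at $i=N_n-1$, which involves $v(0)$, is exactly the jump $R_{N_n-1}^{m}-0$ already counted in $\TV{\rho_N^{m}}$. Once that is verified, the bound holds and everything else is routine.
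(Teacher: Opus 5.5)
Your proposal is correct and follows essentially the same route as the paper. Items (1)--(3) come from \eqref{e:x_dot}, \eqref{e:R_i_minus_R_m}, the linear interpolation of the gaps and Proposition~\ref{prop:max_princ}, and item (4) from \eqref{e:x_dot} combined with Proposition~\ref{prop:BV_estimate}; your explicit check that the $i=N_n-1$ term, with $R_{N_n}^{m}=0$, is the boundary jump $R_{N_n-1}^{m}$ already counted in $\TV{\rho_N^{m}}$, and that dropping $R_0^{m}$ only helps, is bookkeeping the paper leaves implicit.
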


\begin{proof}
Estimate \eqref{e:x_i_dot_est} is straightforward from \eqref{e:x_dot} and from Proposition~\ref{prop:max_princ}.
\eqref{e:x_i_minus_x_m_est} follows directly from \eqref{e:x_{i}_minus_x_m} and the definition of the scheme \eqref{e:t_method_i}, with the help of Proposition~\ref{prop:max_princ} too.

Fix $i\in\disint{0}{N_n-1}$. Take $d_i(t) = x_{i+1}(t)-x_i(t)$ and observe that
\[t \in [t^{m},t^{m+1}] \Longrightarrow 0< \ell_n/R \leqslant \min\{d_i^{m},d_i^{m+1}\} \leqslant d_i(t) \leqslant \max\{d_i^{m},d_i^{m+1}\}\]
by \eqref{e:PinkFloyd} and Proposition~\ref{prop:max_princ}, where $d_i^{m}$ is defined in \eqref{e:dim}.
Observe that $R_i(t) = \ell_n/d_i(t)$, hence
\[t \in [t^{m},t^{m+1}] \Longrightarrow 0< \min\{R_i^{m},R_i^{m+1}\} \leqslant R_i(t) \leqslant \max\{R_i^{m},R_i^{m+1}\} \leqslant R.\]
This estimate readily implies \eqref{e:bound_R_i}.
Then, \eqref{e:Ri-Rim_est} follows from \eqref{e:R_i_minus_R_m}, \eqref{e:x_dot} and \eqref{e:t_method_i}.
At last, \eqref{e:dot_minus_dot} can be obtained by \eqref{e:x_dot} and Proposition~\ref{prop:BV_estimate}.
\end{proof}

In the next proposition we show that the limit $\rho$ obtained in Proposition~\ref{prop:convergence} is indeed a weak solution of the Cauchy problem \eqref{e:CP} in the sense of Definition~\ref{d:weak}.

\begin{prop} \label{prop:weak}
Let $\rho_n = \rho_{N_n}^{M_n}$ with $(N_n)_n$, $(M_n)_n$ satisfying the CFL-type condition \eqref{e:CFLtheta} and such that $M_n,N_n \nearrow +\infty$.
Assume that $(\rho_n)_n$ converges to some limit $\rho\in L^\infty_c(\R\times [0,T])$ in $L^1(\R\times[0,T])$ and that it satisfies \eqref{e:SleepToken}.
Then, $\rho$ is a weak solution to the Cauchy problem \eqref{e:CP} on $\R\times [0,T]$.
\end{prop}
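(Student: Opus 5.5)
Fix a test function $\varphi\in C^1_c(\R\times[0,T))$. Since $\rho_n\to\rho$ in $L^1$ and all densities are bounded by $R$ with supports in a fixed compact set, $f(\rho_n)\to f(\rho)$ in $L^1$. It therefore suffices to show that the discrete weak residual
\[
\mathcal{E}_n=\int_0^T\!\!\int_\R \left(\rho_n\,\partial_t\varphi+f(\rho_n)\,\partial_x\varphi\right)\d{x}\,\d{t}+\int_\R\overline{\rho}\,\varphi(x,0)\,\d{x}
\]
tends to zero. The initial term is harmless. We have $\rho_n(\cdot,0)=\rho^0_{N_n}\to\overline{\rho}$ in $L^1$, because the $R_i^0$ are cell averages of $\overline{\rho}$ on cells of mass $\ell_n$, and one can estimate $\int\abs{\rho^0_N-\overline{\rho}}$ by a $d_1$-bound of order $\ell_n$ plus the interpolation Lemma~\ref{lem:appendix} with the $BV$ bound.

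The core of the argument is to work on each time slab $[t^m,t^{m+1})$ with the continuous interpolated curves $x_i(t)$ of \eqref{e:PinkFloyd}. On the slab, $\rho_n(\cdot,t)=\rho_N^m$ is frozen on the cells $[x_i^m,x_{i+1}^m)$. I would compare it with the moving reconstruction $\tilde\rho_n(x,t)=\sum_i R_i(t)\mathbf{1}_{[x_i(t),x_{i+1}(t))}(x)$, which conserves mass $\ell_n$ in each cell. For $\tilde\rho_n$, differentiating in time, using $\int_{x_i}^{x_{i+1}}R_i=\ell_n$ together with \eqref{e:R_i_dot}, and integrating by parts as in \cite{DiFrancescoRosini_ARMA}, gives
\[
\frac{\d}{\d t}\int\tilde\rho_n\varphi\,\d{x}=\sum_i\int_{x_i(t)}^{x_{i+1}(t)}R_i(t)\,\partial_t\varphi\,\d{x}+\sum_i \frac{\ell_n}{d_i}\int_{x_i}^{x_{i+1}}\left[\dot x_i+\frac{x-x_i}{d_i}(\dot x_{i+1}-\dot x_i)\right]\partial_x\varphi\,\d{x}.
\]
Integrating over $[0,T]$ then yields $\int\!\!\int(\tilde\rho_n\partial_t\varphi+\tilde\rho_n w_n\partial_x\varphi)+\int\tilde\rho_n(\cdot,0)\varphi(\cdot,0)=0$, where $w_n$ is the affine interpolation of the particle velocities $\dot x_i$ inside each cell.

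The error terms are estimated as follows.
\begin{enumerate}
\item The difference $\tilde\rho_n-\rho_n$ is controlled in $L^1$ on each slab. The endpoints move by at most $\tau V$ by \eqref{e:x_i_minus_x_m_est}, and the values change by $\abs{R_i(t)-R_i^m}\leqslant \frac{\tau_n}{\ell_n}[v]_{\mathrm{Lip}}R^2(\ldots)$ by \eqref{e:Ri-Rim_est}. Since each cell has length at most of order $\ell_n$ times the inverse density, the value error times cell length is $O(\tau_n)\cdot\mathrm{TV}$, and summing over $i$ with the $BV$ bound \eqref{e:TV} gives an $O(\tau_n)$ error per unit time.
\item The flux discrepancy $\rho_n w_n-f(\rho_n)$ splits into three pieces:
\begin{itemize}
\item $\dot x_i$ versus $v(R_i^m)$, a difference of order $(1-\theta)[v]_{\mathrm{Lip}}\abs{R_i^{m+1}-R_i^m}$, which is controlled via \eqref{e:Rdot1} by $\frac{\tau}{\ell}R^2[v]_{\mathrm{Lip}}$ times a local variation;
\item the interpolation term $\frac{x-x_i}{d_i}(\dot x_{i+1}-\dot x_i)$, bounded using \eqref{e:dot_minus_dot} by $\norm{\partial_x\varphi}_\infty\,\ell_n\,[v]_{\mathrm{Lip}}\mathrm{TV}(\overline{\rho})$ after multiplying by $R_i d_i=\ell_n$;
\item the upwind shift, i.e.\ replacing $v(R_i)$ where the scheme really uses $v$ at cell $i$ with respect to particle $i$ (in \eqref{e:x_dot} $\dot x_i$ depends on $R_i$ only, so this piece vanishes), together with the rightmost cell, which contributes at most $O(\ell_n)$.
\end{itemize}
\end{enumerate}

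The main obstacle is the term $\frac{\tau_n}{\ell_n}$ that appears in \eqref{e:Ri-Rim_est} and in $R_i^{m+1}-R_i^m$. It is bounded, by the CFL condition \eqref{e:CFL}, only when $\theta>0$; for $\theta=0$ the ratio $\tau_n/\ell_n$ is not controlled. My plan is to exploit the cancellation structure: multiplied by cell length $d_i\leqslant$ ... the product $\abs{R_i(t)-R_i^m}\,d_i=\abs{d_i(t)-d_i^m}R_i^m R_i(t)/R_i(t)\cdot\ldots$ reduces to $\abs{d_i(t)-d_i^m}R_i^m\leqslant R\,\tau_n\abs{\dot x_{i+1}-\dot x_i}$. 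Summing over $i$ with \eqref{e:dot_minus_dot} then gives $O(\tau_n)$ with no $\ell_n$ in the denominator. The first flux piece would be handled in the same way, writing it in Lagrangian form, so that every error becomes $O(\tau_n+\ell_n)\,\norm{\varphi}_{C^1}\,T$ and $\mathcal{E}_n\to0$.
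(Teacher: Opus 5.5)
Your proposal is correct and is essentially the paper's argument. The exact continuity equation you derive for the moving reconstruction $\tilde\rho_n$ with affine velocity $w_n$ is the same time integration by parts the paper performs on $\mathfrak{M}^n$: your interpolation piece is exactly $-\mathfrak{M}^n$, bounded by $O(\ell_n)$. Your remaining errors correspond to $\mathfrak{E}^n_1$--$\mathfrak{E}^n_4$ and use the same two tools, namely cancelling $\tau_n/\ell_n$ against the cell mass $R_i d_i=\ell_n$ and the $BV$ bound. The differences are cosmetic: you compare $\tilde\rho_n$ with $\rho_n$ in $L^1$ globally, and you handle the initial datum via Lemma~\ref{lem:appendix} rather than weak$^*$ convergence.

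One step should be made explicit: the endpoint-shift part of $\norm{\tilde\rho_n-\rho_n}_{L^1}$. Bound it by summation by parts, $\sum_i \abs{R_i^m-R_{i-1}^m}\,\abs{x_i(t)-x_i^m}\leqslant \tau_n V\,\TV{\overline{\rho}}$, as the paper does in $\mathfrak{E}^n_2$ and $\mathfrak{E}^n_3$. Bounding each shifted sliver separately by $R\,\tau_n V$ only gives $O(\tau_n/\ell_n)$, which is not controlled for $\theta=0$.
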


\begin{proof}
Let $\varphi\in C^\infty_0(\R\times [0,T))$. By construction, $\rho_n(\,\cdot\,,0)$ converges to $\overline{\rho}$ in the weak$^*$ measure sense, hence by \eqref{e:SleepToken} as $n\rightarrow +\infty$ we have
\begin{equation*}
\lim_{n\to+\infty} \mathcal{W}^n = \int_0^T \int_{\R} \rho(x,t) \left(\partial_t\varphi(x,t) + v\!\left(\rho(x,t)\right) \partial_x\varphi(x,t)\right)\d{x} \, \d{t}+\int_\R \overline{\rho}(x) \, \varphi(x,0) \, \d{x} ,
\end{equation*}
where
\[\mathcal{W}^n = \int_0^T \int_{\R} \rho_n(x,t)\left(\partial_t\varphi(x,t) + v\!\left(\rho_n(x,t)\right) \partial_x\varphi(x,t)\right)\d{x} \, \d{t}+\int_\R \rho_n(x,0) \, \varphi(x,0) \, \d{x} .\]
In order to prove that the limit $\rho$ is a weak solution to the Cauchy problem \eqref{e:CP}, it suffices to prove that $\mathcal{W}^n$ converges to zero as $n\rightarrow+\infty$. We use \eqref{e:x_dot} and get
\begin{align*}
\mathcal{W}^n = {}&
\sum_{i=0}^{N_n-1} \left[ \sum_{m=0}^{M_n-1} \int_{t^{m}}^{t^{m+1}}R_{i}^{m}\left(\int_{x_{i}^{m}}^{x_{i+1}^{m}} \partial_t\varphi(x,t) \, \d{x} +v\!\left(R_{i}^{m}\right) \left(\varphi\!\left(x_{i+1}^{m},t\right) - \varphi\!\left(x_{i}^{m},t\right)\right) \right) \d{t}\right.\\
& \qquad \left.
+ R_{i}^0\int_{x_{i}^0}^{x_{i+1}^0} \varphi(x,0) \, \d{x} \right]
\\ = {}&
\mathfrak{M}^n+\mathfrak{E}^n_1+\mathfrak{E}^n_2+\mathfrak{E}^n_3+\mathfrak{E}^n_4,
\end{align*}
with the main term
\begin{align*}
\mathfrak{M}^n = {}&\sum_{m=0}^{M_n-1} \sum_{i=0}^{N_n-1} \int_{t^{m}}^{t^{m+1}}R_{i}(t) \left(\int_{x_{i}(t)}^{x_{i+1}(t)} \partial_t\varphi(x,t) \, \d{x} +\dot{x}_{i}(t) \left( \varphi\!\left(x_{i+1}(t),t\right) - \varphi\!\left(x_{i}(t),t\right) \right) \right) \d{t} \\
&+\sum_{i=0}^{N_n-1}R_{i}^0\int_{x_{i}^0}^{x_{i+1}^0} \varphi(x,0) \, \d{x} ,
\end{align*}
and the error terms
\begin{align*}
\mathfrak{E}^n_1 & = \sum_{m=0}^{M_n-1} \sum_{i=0}^{N_n-1} \int_{t^{m}}^{t^{m+1}} (R_{i}^{m}-R_{i}(t)) \left(\int_{x_{i}^{m}}^{x_{i+1}^{m}} \partial_t\varphi(x,t) \, \d{x} + \dot{x}_{i}(t) \left( \varphi\!\left(x_{i+1}^{m},t\right) - \varphi\!\left(x_{i}^{m},t\right)\right) \right) \d{t} ,
\\
\mathfrak{E}^n_2 & = \sum_{m=0}^{M_n-1} \sum_{i=0}^{N_n-1} \int_{t^{m}}^{t^{m+1}}R_{i}(t) \left(\int_{x_{i}^{m}}^{x_{i+1}^{m}} \partial_t\varphi(x,t) \, \d{x}-\int_{x_{i}(t)}^{x_{i+1}(t)} \partial_t\varphi(x,t) \, \d{x}\right) \d{t} ,
\\
\mathfrak{E}^n_3 & = \sum_{m=0}^{M_n-1} \sum_{i=0}^{N_n-1} \int_{t^{m}}^{t^{m+1}}R_{i}(t) \, \dot{x}_{i}(t) \left(\varphi\!\left(x_{i+1}^{m},t\right) - \varphi\!\left(x_{i}^{m},t\right) - \varphi\!\left(x_{i+1}(t),t\right)+\varphi\!\left(x_{i}(t),t\right)\right) \d{t} ,
\\
\mathfrak{E}^n_4 & = \sum_{m=0}^{M_n-1} \sum_{i=0}^{N_n-1} \int_{t^{m}}^{t^{m+1}} R_{i}^{m} \left( v\!\left(R_{i}^{m}\right)-\dot{x}_{i}(t) \right) (\varphi\!\left(x_{i+1}^{m},t\right) - \varphi\!\left(x_{i}^{m},t\right)) \, \d{t} .
\end{align*}
We compute the main term $\mathfrak{M}^n$ using \eqref{e:PinkFloyd}, \eqref{e:Ri}, integration by parts in time, \eqref{e:R_i_dot} and \eqref{e:x_dot} to obtain
\begin{align*}
\mathfrak{M}^n {} = &
\int_0^T \sum_{i=0}^{N_n-1} R_{i}(t) \left( \int_{x_{i}(t)}^{x_{i+1}(t)} \partial_t\varphi(x,t) \, \d{x} +\dot{x}_{i}(t)(\varphi\!\left(x_{i+1}(t),t\right) - \varphi\!\left(x_{i}(t),t\right)) \right) \d{t}
+\sum_{i=0}^{N_n-1}R_{i}(0)\int_{x_{i}(0)}^{x_{i+1}(0)} \varphi(x,0) \, \d{x}
\\{} = &
\int_0^T \sum_{i=0}^{N_n-1}R_{i}(t)
\begin{aligned}[t]
\left[ \vphantom{\int_{x_{i}(t)}^{x_{i+1}(t)}} \right.
&
\frac{\d}{\d{t}} \left(\int_{x_{i}(t)}^{x_{i+1}(t)} \varphi(x,t) \, \d{x}\right) - \left(\varphi\!\left(x_{i+1}(t),t\right)\dot{x}_{i+1}(t) - \varphi\!\left(x_{i}(t),t\right) \dot{x}_{i}(t)\right)
\\
&+\dot{x}_{i}(t)(\varphi\!\left(x_{i+1}(t),t\right) - \varphi\!\left(x_{i}(t),t\right))\Biggr] \, \d{t} +\sum_{i=0}^{N_n-1}R_{i}(0)\int_{x_{i} (0)}^{x_{i+1} (0)} \varphi(x,0) \, \d{x}
\end{aligned}
\\{} = &
\int_0^T \sum_{i=0}^{N_n-1} \left[-\dot{R}_{i}(t) \int_{x_{i}(t)}^{x_{i+1}(t)} \varphi(x,t) \, \d{x} +R_{i}(t) \, \varphi\!\left(x_{i+1}(t),t\right)(\dot{x}_{i}(t) - \dot{x}_{i+1}(t))\right] \d{t}
\\{} = &
\int_0^T \sum_{i=0}^{N_n-1} \left[\frac{R_{i}(t)^2}{\ell_n} (\dot{x}_{i+1}(t)-\dot{x}_{i}(t))\int_{x_{i}(t)}^{x_{i+1}(t)} \left(\varphi(x,t) - \varphi\!\left(x_{i+1}(t),t\right)\right) \d{x}\right] \d{t}
\\{} = &
\sum_{m=0}^{M_n-1} \int_{t^{m}}^{t^{m+1}} \sum_{i=0}^{N_n-1}
\begin{aligned}[t]
\left[ \vphantom{\int_{x_{i}(t)}^{x_{i+1}(t)}} \right.
&
\frac{R_{i}(t)^2}{\ell_n} \left( \theta \left( v\!\left(R_{i+1}^{m}\right) - v\!\left(R_{i}^{m}\right) \right) + \left(1-\theta\right) \left( v\!\left(R_{i+1}^{m+1}\right) - v\!\left(R_{i}^{m+1}\right) \right) \right)
\\&\left.
\times \int_{x_{i}(t)}^{x_{i+1}(t)} \left(\varphi(x,t) - \varphi\!\left(x_{i+1}(t),t\right)\right) \d{x} \right] \d{t}.
\end{aligned}
\end{align*}
Observe that for all $i\in\disint{0}{N_n-1}$
\begin{align*}
&\int_{t^{m}}^{t^{m+1}} \frac{R_{i}(t)^2}{\ell_n} \abs{ \int_{x_{i}(t)}^{x_{i+1}(t)} \left(\varphi(x,t) - \varphi\!\left(x_{i+1}(t),t\right)\right) \d{x} } \d{t}
\\\leqslant{}&
\int_{t^{m}}^{t^{m+1}} \frac{R_{i}(t)^2}{\ell_n} \left( \frac{1}{2} \norm{\partial_x\varphi}_{L^\infty} \left(x_{i+1}(t)-x_{i}(t)\right)^2 \right) \d{t}
=
\int_{t^{m}}^{t^{m+1}} \frac{1}{2} \norm{\partial_x\varphi}_{L^\infty} \ell_n \, \d{t}
=
\frac{1}{2} \norm{\partial_x\varphi}_{L^\infty} \ell_n \, \tau_n .
\end{align*}
Hence, using Proposition~\ref{prop:BV_estimate} we get
\begin{align*}
\abs{\mathfrak{M}^n} &\leqslant
\frac{1}{2} \norm{\partial_x\varphi}_{L^\infty} \ell_n \tau_n \sum_{m=0}^{M_n-1} \sum_{i=0}^{N_n-1} \left( \theta \abs{v\!\left(R_{i+1}^{m}\right) - v\!\left(R_{i}^{m}\right) } + \left(1-\theta\right) \abs{ v\!\left(R_{i+1}^{m+1}\right) - v\!\left(R_{i}^{m+1}\right) } \right)
\\
& \leqslant
\left( \frac{1}{2} \norm{\partial_x\varphi}_{L^\infty} T \vLip \TV{\overline{\rho}} \right) \ell_n \rightarrow 0.
\end{align*}
We now estimate the error terms. Using \eqref{e:Ri-Rim_est}, \eqref{e:x_i_dot_est}, \eqref{e:bound_R_i} we get the estimate for $\mathfrak{E}^n_1$
\begin{align*}
\abs{\mathfrak{E}^n_1}
\leqslant{}&
\frac{\tau_n}{\ell_n} \vLip \left( \norm{\partial_t\varphi}_{L^\infty} + V\norm{\partial_x\varphi}_{L^\infty} \right)
\\
&\times \sum_{m=0}^{M_n-1} \sum_{i=0}^{N_n-1} \int_{t^{m}}^{t^{m+1}}
R_{i}(t) \, R_{i}^{m} \left[ \theta \abs{R_{i+1}^{m}-R_{i}^{m}}
+\left(1-\theta\right) \abs{R_{i+1}^{m+1}-R_{i}^{m+1}} \right] \left(x_{i+1}^{m}-x_{i}^{m}\right) \d{t}
\\
\leqslant{}&
\left( \vLip \left( \norm{\partial_t\varphi}_{L^\infty} + V \norm{\partial_x\varphi}_{L^\infty} \right) T \, R \, \TV{\overline{\rho}} \right) \tau_n \rightarrow 0,
\end{align*}
where we have used Proposition~\ref{prop:BV_estimate}. In order to control $\mathfrak{E}^n_2$ we use summation by parts in the index $i$ as follows:
\begin{align*}
\abs{\mathfrak{E}^n_2} &= \abs{\sum_{m=0}^{M_n-1} \sum_{i=0}^{N_n-1} \int_{t^{m}}^{t^{m+1}} R_{i}(t) \left(\int_{x_{i+1}(t)}^{x_{i+1}^{m}} \partial_t\varphi(x,t) \, \d{x} -\int_{x_{i}(t)}^{x_{i}^{m}} \partial_t\varphi(x,t) \, \d{x} \right) \d{t}}\\
&= \left|\sum_{m=0}^{M_n-1} \int_{t^{m}}^{t^{m+1}} \right.
\begin{aligned}[t]
\left[ \vphantom{\int_{x_{N_n}(t)}^{x_{N_n}^{m}}} \right.&
\sum_{i=1}^{N_n-1} \left(R_{i-1}(t)-R_{i}(t)\right) \int_{x_{i}(t)}^{x_{i}^{m}} \partial_t\varphi(x,t) \, \d{x}
\\&\left.\left.
+ R_{N_n-1}(t)\int_{x_{N_n}(t)}^{x_{N_n}^{m}} \partial_t\varphi(x,t) \, \d{x} -R_0(t)\int_{x_0(t)}^{x_0^{m}} \partial_t\varphi(x,t) \, \d{x} \right] \d{t} \right|
\end{aligned}\\
&\leqslant \left( T \, \TV{\overline{\rho}} \norm{ \partial_t\varphi}_{L^\infty} V + 2 \, T \, R \norm{\partial_t\varphi}_{L^\infty} V \right) \tau_n \rightarrow 0,
\end{align*}
with the last inequality following from \eqref{e:x_i_minus_x_m_est}, \eqref{e:bound_R_i} and from Proposition~\ref{prop:BV_estimate}. We use summation by parts in $i$ in the estimate for $\mathfrak{E}^n_3$ as well as follows:
\begin{align*}
\mathfrak{E}^n_3 ={}& \sum_{m=0}^{M_n-1} \sum_{i=0}^{N_n-1} \int_{t^{m}}^{t^{m+1}}R_{i}(t) \, \dot{x}_{i}(t)\left( \left( \varphi\!\left(x_{i+1}^{m},t\right) - \varphi\!\left(x_{i+1}(t),t\right) \right) - \left( \varphi\!\left(x_{i}^{m},t\right) - \varphi\!\left(x_{i}(t),t\right) \right) \right) \d{t}
\\={}&
\sum_{m=0}^{M_n-1} \int_{t^{m}}^{t^{m+1}}
\left[ \vphantom{\sum_{i=1}^{N_n-1}} \right.
\sum_{i=1}^{N_n-1} \left(R_{i-1}(t) \, \dot{x}_{i-1}(t)-R_{i}(t) \, \dot{x}_{i}(t)\right) \left(\varphi\!\left(x_{i}^{m},t\right) - \varphi\!\left(x_{i}(t),t\right)\right)
\\&
\left. \vphantom{\sum_{i=1}^{N_n-1}}
+R_{N_n-1}(t) \, \dot{x}_{N_n-1}(t)\left(\varphi(x_{N_n}^{m},t) - \varphi(x_{N_n}(t),t)\right)
- R_0(t) \, \dot{x}_0(t)\left(\varphi(x_0^{m},t) -\varphi(x_0(t),t)\right)\right] \d{t}
\\={}&
\sum_{m=0}^{M_n-1} \int_{t^{m}}^{t^{m+1}}
\left[ \vphantom{\sum_{i=1}^{N_n-1}} \right.
\sum_{i=1}^{N_n-1}
\left(R_{i-1}(t)-R_{i}(t)\right) \dot{x}_{i-1}(t) \left(\varphi\!\left(x_{i}^{m},t\right) - \varphi\!\left(x_{i}(t),t\right)\right)
\\&
+ \sum_{i=1}^{N_n-1}
R_{i}(t) \left( \dot{x}_{i-1}(t) - \dot{x}_{i}(t)\right)
\left(\varphi\!\left(x_{i}^{m},t\right) - \varphi\!\left(x_{i}(t),t\right)\right)
\\&\left. \vphantom{\sum_{i=1}^{N_n-1}}
+R_{N_n-1}(t) \, \dot{x}_{N_n-1}(t)\left(\varphi(x_{N_n}^{m},t) - \varphi(x_{N_n}(t),t)\right)
-R_0(t) \, \dot{x}_0(t) \left(\varphi(x_0^{m},t) -\varphi(x_0(t),t)\right) \right] \d{t}
\end{align*}
which leads, thanks to \eqref{e:x_i_minus_x_m_est} and to \eqref{e:x_i_dot_est}, to the estimate
\begin{align*}
&\abs{\mathfrak{E}^n_3} \leqslant
T \left(
\TV{\overline{\rho}} \norm{\partial_x\varphi}_{L^\infty} V^2
+ R \, 2 \, V \norm{\partial_x\varphi}_{L^\infty} V
+ 2 \, R \, V \norm{\partial_x\varphi}_{L^\infty} V
\right)\tau_n \rightarrow 0.
\end{align*}
As for the last term, observe first that by \eqref{e:x_dot} we have
\[\abs{ v\!\left(R_{i}^{m}\right)-\dot{x}_{i}(t) } =
\left(1-\theta\right) \abs{ v\!\left(R_{i}^{m+1}\right) - v\!\left(R_{i}^{m}\right) } \leqslant
\left(1-\theta\right) \vLip \abs{ R_{i}^{m+1} - R_{i}^{m} } .\]
Furthermore, we have
\[\abs{ R_{i}^{m} (\varphi\!\left(x_{i+1}^{m},t\right) - \varphi\!\left(x_{i}^{m},t\right)) }
\leqslant
R_{i}^{m} \norm{\partial_x\varphi}_{L^\infty} \left(x_{i+1}^{m}-x_{i}^{m}\right) =
\ell_{N_n } \norm{\partial_x\varphi}_{L^\infty} .\]
Moreover, by \eqref{e:Rdot1} we have
\begin{equation}
\label{e:R_i_time_est}
\abs{R_{i}^{m+1}-R_{i}^{m}} \leqslant \frac{\tau_n}{\ell_n} \vLip R_i^{m} \, R_i^{m+1} \left[ \theta \abs{R_{i+1}^{m}-R_{i}^{m}} +\left(1-\theta\right) \abs{R_{i+1}^{m+1}-R_{i}^{m+1}} \right] ,
\end{equation}
and by Propositions~\ref{prop:max_princ} and~\ref{prop:BV_estimate} it follows
\[\sum_{i=0}^{N_n-1} \abs{R_{i}^{m+1}-R_{i}^{m}} \leqslant \frac{\tau_n}{\ell_n} \vLip R^2 \, \TV{\overline{\rho}} .\]
The above estimates together with \eqref{e:R_i_time_est} and Proposition~\ref{prop:BV_estimate} lead to
\begin{align*}
\abs{\mathfrak{E}^n_4} &\leqslant \left(1-\theta\right) \vLip \ell_n \norm{\partial_x\varphi}_{L^\infty} \sum_{m=0}^{M_n-1} \sum_{i=0}^{N_n-1} \int_{t^{m}}^{t^{m+1}} \abs{R_{i}^{m+1}-R_{i}^{m}} \d{t}
\\
&= \left(1-\theta\right) \vLip \ell_n \norm{\partial_x\varphi}_{L^\infty} \, \tau_n \sum_{m=0}^{M_n-1} \sum_{i=0}^{N_n-1} \abs{R_{i}^{m+1}-R_{i}^{m}}\\
& 
\leqslant \left( \left(1-\theta\right) \vLip^2 \norm{\partial_x\varphi}_{L^\infty} T \, R^2 \, \TV{\overline{\rho}} \right) \tau_n
\rightarrow 0.
\end{align*}
This concludes the proof.
\end{proof}

\subsection{Convergence to the entropy solution}

In order to complete the proof of Theorem~\ref{thm:main}, we are only left with proving that if $M_n,N_n\nearrow +\infty$ subject to \eqref{e:CFL}, \eqref{e:CFL-extra} and $(\rho_n)_n$ converges strongly in $L^1(\R\times [0,T])$ and almost everywhere to $\rho$, then $\rho$ is the unique entropy solution to the Cauchy problem \eqref{e:CP} in the sense of Definition~\ref{d:entro}.

\begin{prop}
Let $(N_n)_n,(M_n)_n\subseteq\N$ be such that $M_n,N_n \nearrow +\infty$ and satisfying \eqref{e:CFL-extra} and the CFL-type condition \eqref{e:CFL}.
Assume that $(\rho_n)_n$ as in \eqref{e:rn} converges to some limit $\rho\in L^\infty_c(\R\times [0,T])$ in $L^1(\R\times[0,T])$ and that it satisfies \eqref{e:SleepToken}.
Then, $\rho$ is an entropy solution to the Cauchy problem \eqref{e:CP} on $\R\times [0,T]$.
\end{prop}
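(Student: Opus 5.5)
We mimic the proof of Proposition~\ref{prop:weak}, now with the Kruzhkov entropy pair $\eta_k(\rho)=\abs{\rho-k}$, $q_k(\rho)=\sgn{\rho-k}(f(\rho)-f(k))$. Fix $k\in[0,R]$ and a non-negative $\varphi\in C^\infty_c(\R\times[0,T))$. Define
\[\mathcal{E}^n=\int_0^T\!\!\int_\R\left(\abs{\rho_n-k}\partial_t\varphi+q_k(\rho_n)\partial_x\varphi\right)\d{x}\,\d{t}+\int_\R\abs{\rho_n(x,0)-k}\varphi(x,0)\,\d{x}.\]
The data term converges because $\rho_n(\cdot,0)\to\overline{\rho}$ in $L^1$. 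This follows from the uniform BV bound and the fact that $\rho_n(\cdot,0)$ is an average reconstruction. The other terms converge by a.e.\ and $L^1$ convergence together with \eqref{e:SleepToken}. So it suffices to prove $\liminf_n\mathcal{E}^n\geqslant0$.

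As in the weak case, we write $\mathcal{E}^n$ cell by cell and split off error terms. In these error terms we replace $R_i^m$ by $R_i(t)$ and $x_i^m$ by $x_i(t)$. We also use $q_k(R_i^m)=\sgn{R_i^m-k}(R_i^m v(R_i^m)-kv(k))$. The errors of the type $\mathfrak{E}^n_1,\dots,\mathfrak{E}^n_4$ are bounded by the same estimates \eqref{e:x_i_minus_x_m_est}, \eqref{e:Ri-Rim_est}, \eqref{e:R_i_time_est} and Proposition~\ref{prop:BV_estimate}. This works since $\rho\mapsto\abs{\rho-k}$ and $q_k$ are Lipschitz on $[0,R]$. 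One check is needed: $\mathfrak{E}^n_1$ involves $\tau_n/\ell_n$ multiplied by cell widths, which gives $O(\tau_n)$. If a bound of the type $\sum_i\abs{R_i^{m+1}-R_i^m}\lesssim\tau_n/\ell_n$ appears without a compensating factor $\ell_n$, then it is only $o(1)$ when $N_n/M_n\to0$. This is exactly where \eqref{e:CFL-extra} enters.

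For the main term we integrate by parts in time on each cell $[x_i(t),x_{i+1}(t)]$, as in the computation of $\mathfrak{M}^n$. Using \eqref{e:R_i_dot} and $\frac{\d}{\d t}\abs{R_i-k}=\sgn{R_i-k}\dot R_i$, this produces interface terms
\[\sum_i\varphi(x_{i+1}(t),t)\Big[\abs{R_i-k}(\dot x_{i+1}-\dot x_i)+\ldots\Big]\]
after a discrete summation by parts. The interface at $x_{i+1}$ then carries a jump term
\[\varphi(x_{i+1})\Big(\abs{R_{i+1}-k}\,(\text{its velocity})-\abs{R_i-k}\dot x_{i+1}+q\text{-contributions}\Big).\]
With $\dot x_{i}\approx v(R_i)$ up to the $O(\tau_n/\ell_n)$ time errors, this becomes
\[\varphi(x_{i+1},t)\Big[\sgn{R_{i+1}-k}\,k\,(v(R_{i+1})-v(k))-\sgn{R_i-k}\,k\,(v(R_i)-v(k))\Big]\]
plus terms of the form $R_i(\varphi(x)-\varphi(x_{i+1}))$, which are $O(\ell_n)$ as in $\mathfrak{M}^n$.

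The key sign check is a case analysis on the position of $k$ relative to $R_i$ and $R_{i+1}$, using only that $v$ is decreasing. If $k$ lies on the same side of both, then summation by parts turns the term into $k\,(v(R_{i+1})-v(R_i))$ times $\varphi$ differences, which is $O(\ell_n)$ after summing with the BV bound. If $k$ lies between them, each of the two summands is non-negative, since $\sgn{\rho-k}(v(\rho)-v(k))\leqslant0$. This mirrors the discrete entropy inequality for continuous FTL in \cite{DiFrancescoRosini_ARMA}, and it is where the upwind structure, with the leader fixed at $v(0)$, is used.

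I expect the main obstacle to be controlling the mismatch between $\dot x_i=\theta v(R_i^m)+(1-\theta)v(R_i^{m+1})$ and $v(R_i(t))$ inside the entropy flux. In the weak case this mismatch was linear and telescoped. Here it is multiplied by $\sgn{R_i(t)-k}$, which can change within a time step. The resulting error is bounded by $\vLip\sum_i\abs{R_i^{m+1}-R_i^m}$ times $\norm{\varphi}_{L^\infty}$ rather than $\ell_n\norm{\partial_x\varphi}_{L^\infty}$. Summed over $m$, this gives a bound of order $T\vLip^2R^2\TV{\overline{\rho}}\,\tau_n/\ell_n$, which vanishes precisely under \eqref{e:CFL-extra}.

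Once $\liminf_n\mathcal{E}^n\geqslant0$ holds, the limit $\rho$ is an entropy solution. Continuity in time into $L^1$ follows from \eqref{e:lipL1t}. Kruzhkov uniqueness then yields convergence of the whole sequence.
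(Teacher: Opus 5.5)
\textbf{There is a genuine gap in the sign check for the main term.} Your overall architecture is the paper's: you rewrite cell by cell, replace $R_i^m,x_i^m$ by the interpolants $R_i(t),x_i(t)$, and integrate by parts in time through \eqref{e:R_i_dot}. You also correctly observe that the mismatch $\dot x_i-v(R_i(t))$, multiplied by sign factors that differ from cell to cell, gives only an $O(\tau_n/\ell_n)$ error. These are the paper's $\mathfrak{E}^n_6,\mathfrak{E}^n_8$, and they are where \eqref{e:CFL-extra} is really needed.

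The sign check is the heart of the proof, and your interface formula is not what the computation produces. Since $R_i(t)\,(x_{i+1}(t)-x_i(t))=\ell_n$, the term coming from $\frac{\d}{\d t}\abs{R_i-k}=\sgn{R_i-k}\dot R_i$ is
\[
\sgn{R_i-k}\frac{R_i^2}{\ell_n}(\dot x_{i+1}-\dot x_i)\int_{x_i}^{x_{i+1}}\varphi\,\d x\approx\sgn{R_i-k}\,R_i\,(\dot x_{i+1}-\dot x_i)\,\varphi(x_{i+1}).
\]
Its coefficient is $\sgn{R_i-k}\,R_i$, not the $\abs{R_i-k}$ you display. The discrepancy $k\,\sgn{R_i-k}(\dot x_{i+1}-\dot x_i)\varphi(x_{i+1})$ is of order one after summing in $i$. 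Moreover, the right edge of cell $i$ is the particle $x_{i+1}$, which moves with $v(R_{i+1})$, not $v(R_i)$.

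Done correctly, all $R_i$-parts of cell $i$ combine into
\[
\sgn{R_i-k}\frac{R_i^2}{\ell_n}\Bigl[(\dot x_{i+1}-\dot x_i)\int_{x_i}^{x_{i+1}}\bigl(\varphi-\varphi(x_{i+1})\bigr)\d x+\bigl(\dot x_i-v(R_i)\bigr)\int_{x_i}^{x_{i+1}}\bigl(\varphi(x_i)-\varphi(x_{i+1})\bigr)\d x\Bigr].
\]
After summation this is $O(\ell_n+\tau_n)$ (the paper's $\mathfrak{E}^n_7$), so only the $k$-parts survive. Replacing $\dot x_j$ by $v(R_j)$ and summing by parts in $i$, the term attached to the particle $x_i$ is
\[
k\,\varphi(x_i(t),t)\,\bigl(v(R_i)-v(k)\bigr)\bigl(\sgn{R_{i-1}-k}-\sgn{R_i-k}\bigr),\qquad i\in\disint{0}{N_n},\quad R_{-1}=R_{N_n}=0.
\]
This vanishes identically when $k$ is not strictly between $R_{i-1}$ and $R_i$, and is positive otherwise because $v$ is decreasing. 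The terms $i=0$ and $i=N_n$ come from the exterior vacuum regions $x<x_0(t)$ and $x>x_{N_n}(t)$, where $\abs{\rho_n-k}=k$ and $q_k(0)=f(k)$. You do not treat these regions, and they are where $\dot x_{N_n}=v(0)$ is actually used.

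Your formula is instead $\varphi(x_{i+1})\bigl(g(R_{i+1})-g(R_i)\bigr)$ with $g(\rho)=k\,\sgn{\rho-k}\bigl(v(\rho)-v(k)\bigr)\leqslant0$, and your case analysis fails for it.
\begin{itemize}
\item If $k<R_i,R_{i+1}$, the term equals $k\,\varphi(x_{i+1})\bigl(v(R_{i+1})-v(R_i)\bigr)$. Its sum over $i$ is a Stieltjes-type sum of order one, bounded by $k\vLip\TV{\overline{\rho}}\norm{\varphi}_{\infty}$ but not smaller in general, with no definite sign. Summation by parts only turns it into roughly $-\int g(\rho_n)\,\partial_x\varphi\,\d x$, which is still $O(1)$ and not $O(\ell_n)$. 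Note also that $\varphi(x_{i+1})-\varphi(x_i)$ is only $O(\ell_n/R_i)$, which is not small near vacuum.
\item If $k$ lies between $R_i$ and $R_{i+1}$, the summand $k\,\sgn{R_{i+1}-k}\bigl(v(R_{i+1})-v(k)\bigr)$ enters with a plus sign and is non-positive. So the two summands have opposite signs and nothing follows.
\end{itemize}

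The missing idea is this. Because the cell mass $\ell_n$ is conserved, only the constant $k$ in $\abs{R_i-k}$ produces non-negligible interface terms. At each particle $x_i$ these combine into $v(R_i)-v(k)$ times the jump of $\sgn{\,\cdot\,-k}$ across $x_i$, with $x_i$ moving at the speed $v(R_i)$ of the cell ahead of it.
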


\begin{proof}
We first observe that \eqref{e:CFL-extra} implies that
\begin{equation*}
\lim_{n\to+\infty} \frac{\tau_n}{\ell_n}=0,
\end{equation*}
where $\tau_n = T/M_n$ and $\ell_n =L/N_n$.
Let $\varphi\in C_c^\infty(\R\times [0,T))$ with $\varphi\geqslant 0$, and let $k\in [0,R]$. Using an argument similar to the one at the beginning of the proof of Proposition~\ref{prop:weak}, it is sufficient to prove that $\lim_{n\to+\infty} \mathcal{E}^n \geqslant 0$, where
\begin{align*}
\mathcal{E}^n ={}& \int_0^T\int_\R \left[ \abs{\rho_n(x,t)-k} \partial_t\varphi(x,t)+\Phi_k\left(\rho_n(x,t)\right) \partial_x\varphi(x,t)\right] \d{x} \, \d{t}
+ \int_\R \abs{\rho_n(x,0)-k} \varphi(x,0) \, \d{x} ,
\end{align*}
with $k\in[0,R]$ and
\begin{align*}
&\Phi_k(\rho) = \sgn{\rho-k} \left( f(\rho)-f(k) \right),&
&f(\rho) = \rho \, v(\rho).
\end{align*}
We rewrite $\mathcal{E}^n$ by involving the piecewise reconstruction $R_i(t)$ given in \eqref{e:Ri} in order to obtain
\begin{align*}
\mathcal{E}^n ={}&
\sum_{m=0}^{M_n-1} \int_{t^{m}}^{t^{m+1}} \left[ \int_{-\infty}^{x_0^{m}} \left(k \, \partial_t\varphi(x,t) + f(k) \, \partial_x\varphi(x,t)\right) \d{x}
+ \int_{x_{N_n}^{m}}^{+\infty} \left(k \, \partial_t\varphi(x,t) + f(k) \, \partial_x\varphi(x,t)\right) \d{x} \right] \d{t}
\\&
+ \sum_{m=0}^{M_n-1} \sum_{i=0}^{N_n-1} \int_{t^{m}}^{t^{m+1}}
\left[
\abs{R_{i}^{m}-k} \int_{x_{i}^{m}}^{x_{i+1}^{m}} \partial_t\varphi(x,t) \, \d{x}
+\Phi_k\left(R_{i}^{m}\right) \left(\varphi\!\left(x_{i+1}^{m},t\right) - \varphi\!\left(x_{i}^{m},t\right)\right) \right] \d{t}
\\&
+ k \int_{-\infty}^{x^0_0} \varphi(x,0) \, \d{x}
+ k \int_{x_{N_n}^0}^{+\infty} \varphi(x,0) \, \d{x}
+ \sum_{i=0}^{N_n-1} \abs{R_{i}^0-k} \int_{x_{i}^0}^{x_{i+1}^0} \varphi(x,0) \, \d{x}
\\={}&
\mathfrak{M}^n_1 + \mathfrak{M}^n_2 + \mathfrak{E}^n_1 + \mathfrak{E}^n_2 + \mathfrak{E}^n_3 + \mathfrak{E}^n_4 + \mathfrak{E}^n_5,
\end{align*}
where
\begin{align*}
\mathfrak{M}^n_1 ={}&
\int_0^T \int_{-\infty}^{x_0(t)} \left(k \, \partial_t\varphi(x,t) + f(k) \partial_x\varphi(x,t)\right) \d{x} \, \d{t} + \int_0^T\int_{x_N(t)}^{+\infty} \left(k \, \partial_t\varphi(x,t) + f(k) \, \partial_x\varphi(x,t)\right) \d{x} \, \d{t}
\\&
+ k \int_{-\infty}^{x_0(0)} \varphi(x,0) \, \d{x}
+ k \int_{x_{N_n}(0)}^{+\infty} \varphi(x,0) \, \d{x} ,
\\
\mathfrak{M}^n_2 ={}&
\sum_{i=0}^{N_n-1} \int_{0}^{T} \left[
\abs{R_{i}(t)-k} \int_{x_{i}(t)}^{x_{i+1}(t)} \partial_t\varphi(x,t) \, \d{x}
+\Phi_k\left(R_{i}(t)\right) \left(\varphi\!\left(x_{i+1}(t),t\right) - \varphi\!\left(x_{i}(t),t\right)\right) \right] \d{t}
\\&
+\sum_{i=0}^{N_n-1} \abs{R_{i} (0)-k} \int_{x_{i} (0)}^{x_{i+1} (0)} \varphi(x,0) \, \d{x} ,
\end{align*}
and
\begin{align*}
\mathfrak{E}^n_1 ={}&
\sum_{m=0}^{M_n-1} \sum_{i=0}^{N_n-1} \int_{t^{m}}^{t^{m+1}} \left( \abs{R_{i}^{m}-k} - \abs{R_{i}(t)-k} \right) \left( \int_{x_{i}^{m}}^{x_{i+1}^{m}} \partial_t\varphi(x,t) \, \d{x} \right) \d{t} , \\
\mathfrak{E}^n_2 ={}&
\sum_{m=0}^{M_n-1} \sum_{i=0}^{N_n-1} \int_{t^{m}}^{t^{m+1}} \left( \Phi_k\left(R_{i}^{m}\right) - \Phi_k\left(R_{i}(t)\right) \right) \left(\varphi\!\left(x_{i+1}^{m},t\right) - \varphi\!\left(x_{i}^{m},t\right)\right) \d{t}, \\
\mathfrak{E}^n_3 ={}&
\sum_{m=0}^{M_n-1} \sum_{i=0}^{N_n-1} \int_{t^{m}}^{t^{m+1}} \abs{R_{i}(t)-k} \left(\int_{x_{i}^{m}}^{x_{i+1}^{m}} \partial_t\varphi(x,t) \, \d{x} - \int_{x_{i}(t)}^{x_{i+1}(t)} \partial_t\varphi(x,t) \, \d{x} \right) \d{t} , \\
\mathfrak{E}^n_4 ={}&
\sum_{m=0}^{M_n-1} \sum_{i=0}^{N_n-1} \int_{t^{m}}^{t^{m+1}} \Phi_k\left(R_{i}(t)\right) \left(\varphi\!\left(x_{i+1}^{m},t\right) - \varphi\!\left(x_{i}^{m},t\right) - \varphi\!\left(x_{i+1}(t),t\right)+\varphi\!\left(x_{i}(t),t\right)\right) \d{t} , \\
\mathfrak{E}^n_5 ={}&
\sum_{m=0}^{M_n-1} \int_{t^{m}}^{t^{m+1}} \left[ \int_{x_0(t)}^{x_0^{m}} \left(k \, \partial_t\varphi(x,t) + f(k) \, \partial_x\varphi(x,t)\right) \d{x}
+ \int_{x_N^{m}}^{x_N(t)} \left(k \, \partial_t\varphi(x,t) + f(k) \, \partial_x\varphi(x,t)\right) \d{x} \right] \d{t}.
\end{align*}
Let us first evaluate the first main term $\mathfrak{M}^n_1$ as follows. Integration by parts in time and the equality $\dot{x}_{N_n}(t) = v(0)$ imply
\begin{align*}
\mathfrak{M}^n_1 ={}& k \int_0^T\left[\frac{\d}{\d{t}} \left(\int_{-\infty}^{x_0(t)} \varphi(x,t) \, \d{x} \right) - \dot{x}_0(t) \, \varphi(x_0(t),t)+v(k) \, \varphi(x_0(t),t)\right] \d{t}
\\
&+ k \int_0^T\left[\frac{\d}{\d{t}} \left(\int_{x_{N_n}(t)}^{+\infty} \varphi(x,t) \, \d{x} \right) + \dot{x}_{N_n}(t) \, \varphi(x_{N_n}(t),t)-v(k) \, \varphi(x_{N_n}(t),t)\right] \d{t}
\\
&+k\int_{-\infty}^{x_0(0)} \varphi(x,0) \, \d{x} + k \int_{x_{N_n}(0)}^{+\infty} \varphi(x,0) \, \d{x}
\\
={}& k \int_0^T \left( \varphi(x_0(t),t)\left(v(k)-\dot{x}_0(t)\right) - \varphi(x_{N_n}(t),t)\left(v(k)-\dot{x}_{N_n}(t)\right) \right) \d{t}
\\
={}& \widetilde{\mathfrak{M}}^n_1 + \mathfrak{E}^n_6,
\end{align*}
with
\begin{align} \label{e:M1_est}
\widetilde{\mathfrak{M}}^n_1 &= k \int_0^T \left( \varphi(x_0(t),t) \left(v(k)-v\!\left(R_0(t)\right)\right) - \varphi(x_{N_n}(t),t) (v(k) - v(0)) \right) \d{t} ,
\\\nonumber
\mathfrak{E}^n_6 &= k \int_0^T \varphi(x_0(t),t) \left( v\!\left(R_0(t)\right) - \dot{x}_0(t) \right) \d{t} .
\end{align}
With a similar strategy, we write $\mathfrak{M}^n_2$ as follows:
\begin{align*}
\mathfrak{M}^n_2 ={}& \sum_{i=0}^{N_n-1} \int_0^T
\left[ \vphantom{\int_{x_{i}(t)}^{x_{i+1}(t)}} \right.
\begin{aligned}[t]
&\abs{R_{i}(t)-k} \left(\frac{\d}{\d{t}} \left(\int_{x_{i}(t)}^{x_{i+1}(t)} \varphi(x,t) \, \d{x} \right) - \varphi\!\left(x_{i+1}(t),t\right)\dot{x}_{i+1}(t)+\varphi\!\left(x_{i}(t),t\right)\dot{x}_{i}(t)\right)
\\&\left. \vphantom{\int_{x_{i}(t)}^{x_{i+1}(t)}}
+\Phi_k\left(R_{i}(t)\right) (\varphi\!\left(x_{i+1}(t),t\right) - \varphi\!\left(x_{i}(t),t\right))\right] \d{t}
\end{aligned}
\\
&+ \sum_{i=0}^{N_n-1} \abs{R_{i} (0)-k} \int_{x_{i} (0)}^{x_{i+1} (0)} \varphi(x,0) \, \d{x}
\\={}&
\sum_{i=0}^{N_n-1} \int_0^T \sgn{R_{i}(t)-k}
\left[ \vphantom{\int_{x_{i}(t)}^{x_{i+1}(t)}} \right.
\begin{aligned}[t]
&-\dot{R}_{i}(t)\int_{x_{i}(t)}^{x_{i+1}(t)} \varphi(x,t) \, \d{x}
\\
&+\left[R_{i}(t)(v\!\left(R_{i}(t)\right) - \dot{x}_{i+1}(t))-k(v(k) - \dot{x}_{i+1}(t))\right]\varphi\!\left(x_{i+1}(t),t\right)
\\
&\left. \vphantom{\int_{x_{i}(t)}^{x_{i+1}(t)}}
- \left[R_{i}(t)(v\!\left(R_{i}(t)\right)-\dot{x}_{i}(t)) - k(v(k) - \dot{x}_{i}(t))\right]\varphi\!\left(x_{i}(t),t\right) \right] \d{t}
\end{aligned}
\\={}&
\sum_{i=0}^{N_n-1} \int_0^T \sgn{R_{i}(t)-k}
\left[ \vphantom{\int_{x_{i}(t)}^{x_{i+1}(t)}} \right.
\begin{aligned}[t]
&\frac{R_{i}(t)^2}{\ell_n} (\dot{x}_{i+1}(t) - \dot{x}_{i}(t)) \int_{x_{i}(t)}^{x_{i+1}(t)} \varphi(x,t) \, \d{x}
\\&
+ \frac{R_{i}(t)^2}{\ell_n} (v\!\left(R_{i}(t)\right) - \dot{x}_{i+1}(t)) \int_{x_{i}(t)}^{x_{i+1}(t)} \varphi\!\left(x_{i+1}(t),t\right) \d{x}
\\&
- \frac{R_{i}(t)^2}{\ell_n} (v\!\left(R_{i}(t)\right)-\dot{x}_{i}(t)) \int_{x_{i}(t)}^{x_{i+1}(t)} \varphi\!\left(x_{i}(t),t\right) \d{x}
\\&\left. \vphantom{\int_{x_{i}(t)}^{x_{i+1}(t)}}
+ k \left( \dot{x}_{i+1}(t) - v(k) \right) \varphi\!\left(x_{i+1}(t),t\right)
+ k \left( v(k) - \dot{x}_{i}(t) \right) \varphi\!\left(x_{i}(t),t\right) \right] \d{t}
\end{aligned}
\\
={}& \widetilde{\mathfrak{M}}^n_2+ \mathfrak{E}^n_7 + \mathfrak{E}^n_8,
\end{align*}
with
\begin{align}
\label{e:M2_est}
\widetilde{\mathfrak{M}}^n_2 ={}&
k \sum_{i=0}^{N_n-1} \int_0^T \sgn{R_i (t)- k} \left[ (v (R_{i+1}(t)) - v(k)) \, \varphi\!\left(x_{i+1}(t),t\right) + \left(v(k)-v\!\left(R_i(t)\right)\right) \varphi\!\left(x_i(t),t\right) \right] \d{t},
\\\nonumber
\mathfrak{E}^n_7 ={}& \sum_{i=0}^{N_n-1} \int_0^T \sgn{R_{i}(t)-k} \frac{R_{i}(t)^2}{\ell_n} \left[ \vphantom{\int_{x_{i}(t)}^{x_{i+1}(t)}} \right.
(\dot{x}_{i+1}(t) - \dot{x}_{i}(t))\int_{x_{i}(t)}^{x_{i+1}(t)} \varphi(x,t) \, \d{x}
\\\nonumber
&\left. \vphantom{\int_{x_{i}(t)}^{x_{i+1}(t)}}
-(\dot{x}_{i+1}(t)-v\!\left(R_{i}(t)\right) \int_{x_{i}(t)}^{x_{i+1}(t)} \varphi\!\left(x_{i+1}(t),t\right) \d{x} +(\dot{x}_{i}(t)-v\!\left(R_{i}(t)\right)\int_{x_{i}(t)}^{x_{i+1}(t)} \varphi\!\left(x_{i}(t),t\right) \d{x}
\right] \d{t},
\\\nonumber
\mathfrak{E}^n_8 ={}& k \sum_{i=0}^{N_n-1} \int_0^T \sgn{R_i(t) - k} \left[ \left( \dot{x}_{i+1}(t) - v\!\left(R_{i+1}(t)\right) \right) \varphi( x_{i+1}(t),t) + \left(v (R_i (t)) - \dot{x}_i (t) \right) \varphi\!\left(x_i(t),t\right) \right] \d{t}.
\end{align}
We now estimate the terms $\mathfrak{E}^n_i$, $i=\disint{1}{8}$, and show that they are vanishing as $n\to+\infty$.
Concerning $\mathfrak{E}^n_1$, by using that $\abs{ \abs{a-c} - \abs{b-c}} \leqslant \abs{a-b}$ for all $a,b,c \in \R$, \eqref{e:Ri-Rim_est}, \eqref{e:bound_R_i} and Proposition~\ref{prop:BV_estimate} we get
\begin{align*}
\abs{\mathfrak{E}^n_1} & \leqslant \sum_{m=0}^{M_n-1} \sum_{i=0}^{N_n-1} \int_{t^{m}}^{t^{m+1}} \abs{R_i^{m} - R_i (t)} \left( \int_{x_i^{m}}^{x_{i+1}^{m}} \abs{\de_t\varphi(x,t)} \d{x} \right) \d{t}
\\&
\leqslant \sum_{m=0}^{M_n-1} \sum_{i=0}^{N_n-1} \int_{t^{m}}^{t^{m+1}}
\frac{\tau_n}{\ell_n} \vLip R \, R_{i}^{m} \left[ \theta \abs{R_{i+1}^{m}-R_{i}^{m}} +\left(1-\theta\right) \abs{R_{i+1}^{m+1}-R_{i}^{m+1}} \right]
\norm{\de_t \varphi}_{\infty} \left(x_{i+1}^{m} - x_i^{m}\right) \d{t}
\\&
\leqslant T \sum_{i=0}^{N_n-1} \tau_n \vLip R \left[ \theta \abs{R_{i+1}^{m}-R_{i}^{m}} +\left(1-\theta\right) \abs{R_{i+1}^{m+1}-R_{i}^{m+1}} \right] \norm{\de_t \varphi}_{\infty}
\\&
\leqslant \left( T \vLip R \, \TV{\overline{\rho}} \norm{\de_t \varphi}_{\infty} \right) \tau_n \to 0.
\end{align*}
We use that $[\Phi_k]_{\mathrm{Lip}} = \norm{f'}_{\infty}$, and by \eqref{e:Ri-Rim_est}, \eqref{e:bound_R_i} and Proposition~\ref{prop:BV_estimate} we estimate
\begin{align*}
\abs{\mathfrak{E}^n_2} & \leqslant \norm{f'}_{\infty} \sum_{m=0}^{M_n-1} \sum_{i=0}^{N_n-1} \int_{t^{m}}^{t^{m+1}} \abs{R_i^{m} - R_i(t)} \norm{\de_x \varphi}_{\infty} \left(x_{i+1}^{m} - x_i^{m}\right) \d{t} \\
& \leqslant \norm{f'}_{\infty} \sum_{m=0}^{M_n-1} \sum_{i=0}^{N_n-1} \int_{t^{m}}^{t^{m+1}} \tau_n \vLip R \left( \theta \abs{R_{i+1}^{m} - R_i^{m}} + \left(1-\theta\right) \abs{R_{i+1}^{m+1} - R_i^{m+1}} \right) \norm{\de_x \varphi}_{\infty} \, \d{t}
\\&
\leqslant \left( \norm{f'}_{\infty} T \vLip R \, \TV{\overline{\rho}} \norm{\de_x \varphi}_{\infty} \right) \tau_n \to 0.
\end{align*}
Concerning $\mathfrak{E}^n_3$, by \eqref{e:x_i_minus_x_m_est} we get
\begin{align*}
\abs{\mathfrak{E}^n_3} & = \abs{ \sum_{m=1}^{M_n-1} \sum_{i=0}^{N_n-1} \int_{t^{m}}^{t^{m+1}} \abs{ R_i (t)-k } \left( \int_{x_{i+1}(t)}^{x_{i+1}^{m}} \de_t \varphi\, \d{x} - \int_{x_i(t)}^{x_i^{m}} \de_t \varphi\, \d{x} \right) \d{t} }
\\
& \leqslant \sum_{m=0}^{M_n-1} \sum_{i=0}^{N_n-1} \int_{t^{m}}^{t^{m+1}} R \norm{\de_t \varphi}_{\infty} (\abs{x_{i+1}(t)-x_{i+1}^{m}} + \abs{x_i(t)-x_i^{m}}) \, \d{t} \\
& \leqslant M_n \, N_n \, \tau_n \, R \norm{\de_t \varphi}_{\infty} 2 \, \tau_n \, V
\leqslant \left( 2 \, T \, L \, R \norm{\de_t \varphi}_{\infty} V \right) \frac{\tau_n}{\ell_n} \to 0.
\end{align*}
As for the previous term, by \eqref{e:bound_R_i}, \eqref{e:x_i_minus_x_m_est} we have
\begin{align*}
\abs{\mathfrak{E}^n_4} & = \abs{ \sum_{m=0}^{M_n-1} \sum_{i=0}^{N_n-1} \int_{t^{m}}^{t^{m+1}} \Phi_k(R_i(t)) \left[ \varphi(x_{i+1}^{m}, t) - \varphi\!\left(x_{i+1}(t),t\right) - (\varphi\!\left(x_{i}^{m},t\right) - \varphi\!\left(x_i(t),t\right)) \right] \d{t} } \\
& \leqslant \sum_{m=0}^{M_n-1} \sum_{i=0}^{N_n-1} \int_{t^{m}}^{t^{m+1}} \norm{f'}_{\infty} \abs{R_i(t)-k} \norm{\de_x \varphi}_{\infty} \left( \abs{x_{i+1}^{m} - x_{i+1}(t)} + \abs{x_i^{m} - x_i(t)} \right) \d{t} \\
& \leqslant \sum_{m=0}^{M_n-1} \sum_{i=0}^{N_n-1} \tau_n \norm{f'}_{\infty} R \norm{\de_x \varphi}_{\infty} 2 \tau_n V
= \left( 2 \, T \, V \, L \norm{f'}_{\infty} R \norm{\de_x \varphi}_{\infty} \right) \frac{\tau_n}{\ell_n} \to 0.
\end{align*}
Since $k>0$ and $v$ is a decreasing function, by \eqref{e:x_i_minus_x_m_est} we deduce
\begin{align*}
\abs{\mathfrak{E}^n_5} & = k \sum_{m=0}^{M_n-1} \int_{t^{m}}^{t^{m+1}} \left[ \int_{x_0(t)}^{x_0^{m}} (\de_t \varphi + v(k) \de_x \varphi) \, \d{x} + \int_{x_N^{m}}^{x_N(t)} (\de_t \varphi + v(k) \de_x \varphi ) \, \d{x} \right] \d{t} \\
& \leqslant k \sum_{m=0}^{M_n-1} \int_{t^{m}}^{t^{m+1}} \left( \norm{\de_t \varphi}_{\infty} + V \norm{\de_x \varphi}_{\infty} \right) \left(\abs{x_0^{m} - x_0 (t)} + \abs{x_N^{m} - x_N (t)}\right) \d{t} \\
& \leqslant \left( 2 \, k \, T (\norm{\de_t \varphi}_{\infty} + V \norm{\de_x \varphi}_{\infty}) V\right) \tau_n \to 0.
\end{align*}
By using \eqref{e:x_dot}, \eqref{e:Ri-Rim_est}, \eqref{e:R_i_time_est}, \eqref{e:bound_R_i} and \eqref{e:dot_minus_dot}, we estimate
\begin{align*}
\abs{\mathfrak{E}^n_7}  ={}& \left| \sum_{m=0}^{M_n-1}\sum_{i=0}^{N_n-1} \int_{t^{m}}^{t^{m+1}} \sgn{R_i (t)-k} \frac{R_i (t)^2}{\ell_n} \left[ (\dot{x}_{i+1}(t) - \dot{x}_i(t) ) \int_{x_i(t)}^{x_{i+1}(t)} (\varphi(x,t) - \varphi\!\left(x_{i+1}(t),t\right)) \, \d{x} \right.\right.
\\& \left.\left.\qquad
+ \left(\dot{x}_i(t) - v\!\left(R_i(t)\right)\right) \int_{x_i(t)}^{x_{i+1}(t)} (\varphi(x_i (t),t) - \varphi\!\left(x_{i+1}(t),t\right)) \, \d{x} \right] \d{t} \right|
\\={}&
\left| \sum_{m=0}^{M_n-1}\sum_{i=0}^{N_n-1} \int_{t^{m}}^{t^{m+1}} \sgn{R_i (t)-k} \frac{R_i (t)^2}{\ell_n}
\left[ \vphantom{\int_{x_{i}(t)}^{x_{i+1}(t)}} \right.
(\dot{x}_{i+1}(t) - \dot{x}_i(t) ) \int_{x_i(t)}^{x_{i+1}(t)} (\varphi(x,t) - \varphi\!\left(x_{i+1}(t),t\right)) \, \d{x} \right.
\\& \left.\left.\ 
+ \left[ \left(1-\theta\right) \left(v\!\left(R_i^{m+1}\right)-v\!\left(R_i^{m}\right)\right) + v\!\left(R_i^{m}\right) - v\!\left(R_i(t)\right) \right] \int_{x_i(t)}^{x_{i+1}(t)} \left( \varphi(x_i (t),t) - \varphi\!\left(x_{i+1}(t),t\right) \right) \d{x} \right] \d{t} \right|
\\\leqslant{}&
\sum_{m=0}^{M_n-1} \sum_{i=0}^{N_n-1} \int_{t^{m}}^{t^{m+1}} \frac{R_i(t)^2}{\ell_n}
\left[ \vphantom{\int_{x_{i}(t)}^{x_{i+1}(t)}} \right.
\abs{\dot{x}_{i+1}(t) - \dot{x}_i(t)} \norm{\de_x \varphi}_{\infty} (x_{i+1}(t) - x_i(t))^2
\\
& \qquad
+ \left[ \left(1-\theta\right) \vLip \abs{R_i^{m+1} - R_i^{m}} + \vLip \abs{R_i^{m} - R_i(t)} \right] \norm{\de_x \varphi}_{\infty} (x_{i+1}(t) - x_i(t))^2 \left. \vphantom{\int_{x_{i}(t)}^{x_{i+1}(t)}} \right] \d{t}
\\\leqslant{}&
\sum_{m=0}^{M_n-1} \sum_{i=0}^{N_n-1} \int_{t^{m}}^{t^{m+1}} \ell_n \norm{\de_x \varphi}_{\infty}
\left[ \vphantom{\int_{x_{i}(t)}^{x_{i+1}(t)}} \right.
\abs{\dot{x}_{i+1}(t) - \dot{x}_i(t)} \\
& \qquad + \vLip
\left[ \vphantom{\frac{\tau_n}{\ell_n}} \right.
\begin{aligned}[t]
&\left(1-\theta\right) \frac{\tau_n}{\ell_n} \vLip R_i^{m} \, R_i^{m+1} \left[ \theta \abs{R_{i+1}^{m} - R_i^{m}} + \left(1-\theta\right) \abs{R_{i+1}^{m+1} - R_i^{m+1}} \right]
\\&
\left.
+ \frac{\tau_n}{\ell_n} \vLip R_i(t) R_i^{m} \left[ \theta \abs{R_{i+1}^{m} - R_i^{m}} + \left(1-\theta\right) \abs{R_{i+1}^{m+1} - R_i^{m+1}} \right]
\right]
\left. \vphantom{\int_{x_{i}(t)}^{x_{i+1}(t)}} \right] \d{t}
\end{aligned}
\\\leqslant{}&
\sum_{m=0}^{M_n-1} \sum_{i=0}^{N_n-1} \int_{t^{m}}^{t^{m+1}} \ell_n \norm{\de_x \varphi}_{\infty}
\left[ \vphantom{\int_{x_{i}(t)}^{x_{i+1}(t)}} \right.
\abs{\dot{x}_{i+1}(t) - \dot{x}_i(t)} \\
& \qquad + \vLip \left[
(2-\theta) \frac{\tau_n}{\ell_n} \vLip R^2 \left[ \theta \abs{R_{i+1}^{m} - R_i^{m}} + \left(1-\theta\right) \abs{R_{i+1}^{m+1} - R_i^{m+1}} \right]
\right]
\left. \vphantom{\int_{x_{i}(t)}^{x_{i+1}(t)}} \right] \d{t}
\\\leqslant{}&
\left( \norm{\de_x \varphi}_{\infty} T \vLip \TV{\overline{\rho}} \right) \ell_n
+ \left( \norm{\de_x \varphi}_{\infty} T (2-\theta) \vLip^2 R^2 \, \TV{\overline{\rho}} \right) \tau_n \to 0.
\end{align*}
Due to \eqref{e:x_dot}, \eqref{e:Ri-Rim_est}, \eqref{e:R_i_time_est}, \eqref{e:bound_R_i} and Proposition~\ref{prop:max_princ} we obtain
\begin{align*}
\abs{\mathfrak{E}^n_6}
& = k \abs{ \sum_{m=0}^{M_n-1} \int_{t^{m}}^{t^{m+1}} \varphi(x_0(t),t) \left[ \theta \left( v\!\left(R_0^{m+1}\right) - v\!\left(R_0^{m}\right) \right) + v\!\left(R_0(t)\right) - v\!\left(R_0^{m+1}\right) \right] \d{t} } \\
& \leqslant k \sum_{m=0}^{M_n-1} \tau_n \norm{\varphi}_{\infty} \vLip \left[ \theta \abs{R_0^{m+1} - R_0^{m}} + \abs{R_0(t) - R_0^{m}} + \abs{R_0^{m} - R_0^{m+1}} \right] \\
& \leqslant \left( k \, T \norm{\varphi}_{\infty} \vLip (\theta+2) \vLip R^3 \right) \frac{\tau_n}{\ell_n} \to 0.
\end{align*}
For the last error term, by \eqref{e:x_dot}, \eqref{e:Ri-Rim_est}, \eqref{e:R_i_time_est} and Proposition~\ref{prop:BV_estimate} we get
\begin{align*}
\abs{\mathfrak{E}^n_8}  \leqslant{}& k \sum_{i=0}^{N_n-1} \int_{0}^{T} \abs{
\left(\dot{x}_{i+1} (t) - v\!\left(R_{i+1}(t)\right)\right) \varphi\!\left(x_{i+1}(t),t\right)
- \left(\dot{x}_i(t) - v\!\left(R_i(t)\right)\right) \varphi\!\left(x_i(t),t\right) } \d{t}
\\={}&
k \sum_{m=0}^{M_n-1} \sum_{i=0}^{N_n-1} \int_{t^{m}}^{t^{m+1}}
\left| \vphantom{\int} \right.
\left( \left(1-\theta\right) \left( v\!\left(R_{i+1}^{m+1}\right) - v\!\left(R_{i+1}^{m}\right) \right) + \left( v\!\left(R_{i+1}^{m}\right) - v\!\left(R_{i+1}(t)\right) \right) \right) \varphi\!\left(x_{i+1}(t),t\right) \\
& \ - \left( \left(1-\theta\right) \left( v\!\left(R_i^{m+1}\right) - v\!\left(R_i^{m}\right) \right) + \left(v\!\left(R_i^{m}\right) - v\!\left(R_i(t)\right) \right) \right) \varphi\!\left(x_i(t),t\right) \left. \vphantom{\int} \right| \d{t}\\
\leqslant{}& k \sum_{m=0}^{M_n-1} \sum_{i=0}^{N_n-1} \int_{t^{m}}^{t^{m+1}} \norm{\varphi}_{\infty} \vLip
\left[ \vphantom{\int} \right.
\left(1-\theta\right) \abs{R_{i+1}^{m+1} - R_{i+1}^{m}} + \abs{R_{i+1}^{m}-R_{i+1}(t)}
\\
& \ +\left(1-\theta\right) \abs{R_i^{m+1} - R_i^{m}} + \abs{R_i^{m} - R_i(t)} 
\left. \vphantom{\int} \right] \d{t}
\\
\leqslant{}& k \sum_{m=0}^{M_n-1} \int_{t^{m}}^{t^{m+1}} \norm{\varphi}_{\infty} \vLip^2 \, \frac{\tau_n}{\ell_n} \, R^2 \, \TV{\overline{\rho}} 2(2-\theta) \, \d{t}\\
={}& \left( k \, T \norm{\varphi}_{\infty} \vLip^2 \, R^2 \, \TV{\overline{\rho}} 2 \, (2-\theta) \right) \frac{\tau_n}{\ell_n} \to 0.
\end{align*}
Then, the sum of $\widetilde{\mathfrak{M}}^n_1$ and $\widetilde{\mathfrak{M}}^n_2$ given in \eqref{e:M1_est} and \eqref{e:M2_est} reduces to
\begin{align*}
\widetilde{\mathfrak{M}}^n_1 + \widetilde{\mathfrak{M}}^n_2
={}&
k
\left[ \vphantom{\int_0^T} \right.
\int_0^T \left[\varphi(x_0(t),t) \left(v(k) - v\!\left(R_0(t)\right)\right) - \varphi(x_N(t),t) \left(v(k) - v(0)\right) \right] \d{t}
\\&
+ \sum_{i=0}^{N_n-1} \int_0^T \sgn{R_i (t) -k} \left[\left(v\!\left(R_{i+1}(t)\right) - v(k)\right) \varphi\!\left(x_{i+1}(t),t\right) + \left(v(k)-v\!\left(R_i(t)\right)\right) \varphi\!\left(x_i(t),t\right)\right] \d{t}
\left. \vphantom{\int_0^T} \right]
\\={}&
k \left[ \vphantom{\int_0^T} \right.
\int_0^T \varphi(x_0(t),t) \left(v(k)-v\!\left(R_0(t)\right)\right) \left(1+ \sgn{R_0(t)-k} \right) \d{t}
\\&
+ \int_0^T \varphi(x_N(t),t) \left(v(0)-v(k)\right) \left(1+ \sgn{R_{N-1}(t)-k}\right) \d{t}
\\&
+ \sum_{i=1}^{N_n-1} \int_0^T \varphi\!\left(x_i(t),t\right) \left(v\!\left(R_i(t)\right)-v(k)\right) \left(\sgn{R_{i-1}(t) -k} - \sgn{R_i(t)-k} \right) \d{t}
\left. \vphantom{\int_0^T} \right]
\geqslant 0,
\end{align*}
that completes the proof.
\end{proof}

Finally, we deduce the convergence of the entire sequence of approximate discrete densities $(\rho_n)_n$ by exploiting the uniqueness of the entropy solution and applying the Urysohn subsequence principle, see for instance \cite[(7.6)]{Coclite-book}.

\section*{Acknowledgments}
MDF and SF are partially supported by the Italian “National Centre for HPC, Big Data and Quantum Computing” - Spoke 5 “Environment and Natural Disasters” and by the Ministry of University and Research (MIUR) of Italy under the grant PRIN 2020- Project N. 20204NT8W4, Nonlinear Evolutions PDEs, fluid
dynamics and transport equations: theoretical foundations and applications.
MDF, SF and VI are partially supported by the InterMaths Network, \url{www.intermaths.eu}. MDF, SF and VI are also partially supported by the INdAM-GNAMPA project 2025 code CUP E5324001950001 ``Teoria e applicazioni dei modelli evolutivi:
trasporto ottimo, metodi variazionali e
approssimazioni particellari deterministiche''. SF and VI are partially  supported by the INdAM-GNAMPA project 2026 code CUP E53C25002010001 ``Modelli di reazione-diffusione-trasporto: dall'analisi alle applicazioni''.
MDR acknowledge financial support from {\em (a)}: the PRIN 2022 project \emph{Modeling, Control and Games through Partial Differential Equations} (D53D23005620006), funded by the European Union - Next Generation EU; {\em (b)}: the  INdAM - GNAMPA Research Project \emph{Modelli di traffico, di biologia e di dinamica dei gas basati su sistemi di equazioni iperboliche}, code CUP E5324001950001.

\appendix

\section{An interpolation inequality}\label{sec:appendix}

\begin{lem} \label{lem:appendix}
Fix $\mu,\eta\in \mathcal{P}_c(\R)\cap BV(\R)$. Then, there holds
\begin{equation*}
\norm{\mu-\eta}_{L^1} \leqslant 2\left(\TV{\mu}+\TV{\eta}\right)^{1/2}d_1(\mu,\eta)^{1/2},
\end{equation*}
where $d_1(\mu,\eta)$ is the $1$-Wasserstein distance between $\mu$ and $\eta$.
\end{lem}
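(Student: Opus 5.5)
The argument works in Lagrangian (mass) coordinates, using the monotone-rearrangement representation $d_1(\mu,\eta)=\norm{X_\mu-X_\eta}_{L^1(0,L)}$ recalled before Lemma~\ref{lem:representation}, together with an equivalent cumulative-distribution representation. Both measures have the same mass, so $F_\mu-F_\eta$ vanishes at $\pm\infty$ and
\[
d_1(\mu,\eta)=\int_\R \abs{F_\mu(x)-F_\eta(x)}\,\d{x}.
\]
This is the standard identity: the area between the graphs of $F_\mu$ and $F_\eta$ can be computed either horizontally, giving $\norm{X_\mu-X_\eta}_{L^1}$, or vertically. Set $G=F_\mu-F_\eta$. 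Then $G'=\mu-\eta$ in the distributional sense, so $\norm{\mu-\eta}_{L^1}=\TV{G}$ (interpreting the total variation of the absolutely continuous function $G$). Also $\TV{\mu-\eta}\leqslant \TV{\mu}+\TV{\eta}=:B$. The claim therefore reduces to a one-dimensional Gagliardo--Nirenberg type interpolation:
\[
\norm{G'}_{L^1}\leqslant 2\,\bigl(\TV{G'}\bigr)^{1/2}\norm{G}_{L^1}^{1/2}.
\]

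To prove it, I mollify $h=\mu-\eta$ at scale $\varepsilon>0$ and split $\norm{h}_{L^1}\leqslant \norm{h-h*\omega_\varepsilon}_{L^1}+\norm{h*\omega_\varepsilon}_{L^1}$. I take $\omega_\varepsilon=\varepsilon^{-1}\mathbf{1}_{[0,\varepsilon]}$, which keeps the constants explicit.

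\textbf{First term.} For a BV function, translation by $y$ costs at most $\abs{y}\TV{h}$ in $L^1$. Averaging over $y\in[0,\varepsilon]$ gives
\[
\norm{h-h*\omega_\varepsilon}_{L^1}\leqslant \tfrac{\varepsilon}{2}\,B.
\]

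\textbf{Second term.} Since $h=G'$, we have $h*\omega_\varepsilon(x)=\varepsilon^{-1}\bigl(G(x)-G(x-\varepsilon)\bigr)$, hence
\[
\norm{h*\omega_\varepsilon}_{L^1}\leqslant \tfrac{2}{\varepsilon}\,d_1(\mu,\eta).
\]

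Combining, $\norm{\mu-\eta}_{L^1}\leqslant \frac{\varepsilon}{2}B+\frac{2}{\varepsilon}d_1$ for every $\varepsilon>0$. Optimizing with $\varepsilon=2\sqrt{d_1/B}$ gives the bound $2\sqrt{B\,d_1}$, which is exactly the stated inequality. The degenerate cases are trivial: if $B=0$, then $\mu=\eta=0$ by compact support; if $d_1=0$, then $\mu=\eta$.

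The main obstacle is justifying $d_1=\norm{F_\mu-F_\eta}_{L^1(\R)}$ under the paper's normalization, the ``scaled'' $d_1$ on $(0,L)$. I would prove it by writing
\[
\abs{X_\mu(z)-X_\eta(z)}=\int_\R \abs{\mathbf{1}_{\{X_\mu(z)\leqslant x\}}-\mathbf{1}_{\{X_\eta(z)\leqslant x\}}}\,\d{x},
\]
applying Fubini in $(z,x)$, and using the equivalence $X(z)\leqslant x\iff z\leqslant F(x)$, valid for the pseudo-inverse since $F$ is continuous. A secondary point is that the BV translation estimate requires $h\in L^1$ with finite total variation, which holds by compact support.
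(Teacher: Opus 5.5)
Your proof is correct and reaches the same constant $2$, but by a genuinely different route from the paper's.

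The paper works with regularized data. It mollifies $\mu,\eta$ with smooth Friedrichs kernels and applies to $u=F_\varepsilon-G_\varepsilon$ the $W^{2,1}$ interpolation inequality $\norm{u'}_{L^1}\leqslant 2\norm{u''}_{L^1}^{1/2}\norm{u}_{L^1}^{1/2}$, quoted from H\"ormander. It then passes to the limit $\varepsilon\to0$, which requires three further facts: Fatou's lemma on the left-hand side, the bound $\int\abs{\mu_\varepsilon'}\leqslant\TV{\mu}$, and the contraction $d_1(\mu_\varepsilon,\eta_\varepsilon)\leqslant d_1(\mu,\eta)$, obtained from $F_\varepsilon-G_\varepsilon=\xi_\varepsilon\ast(F-G)$.

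You instead prove the interpolation inequality directly at the BV level. With the one-sided box kernel, $h\ast\omega_\varepsilon$ is exactly a difference quotient of $G$. The splitting $\norm{h}_{L^1}\leqslant \frac{\varepsilon}{2}B+\frac{2}{\varepsilon}d_1$ then uses only the BV translation estimate and the triangle inequality, and optimizing in $\varepsilon$ gives $2\sqrt{B\,d_1}$.

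What your route buys:
\begin{itemize}
\item a self-contained argument with no black-box citation and no regularization or limit step;
\item a Fubini proof of $d_1=\norm{F_\mu-F_\eta}_{L^1(\R)}$, an identity the paper uses without justification.
\end{itemize}
In effect, your $\varepsilon$-splitting is a proof of the cited inequality itself, carried out directly for the pair $(G,G')$. The paper's version is shorter on the page and presents the lemma as an instance of a standard Gagliardo--Nirenberg-type inequality, at the cost of relying on that reference and on the approximation argument.
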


\begin{proof}
Let $\xi_\varepsilon$ be a family of standard Friedrichs mollifiers, and set
\[\mu_\varepsilon = \mu\ast \xi_\varepsilon,\qquad \eta_\varepsilon = \eta \ast \xi_\varepsilon.\]
Clearly, $\mu_\varepsilon$ and $\eta_\varepsilon$ belong to the Sobolev space $W^{1,1} (\R)$. Moreover, \cite[Theorem 2.2]{ambrosio_fusco_pallara} implies for all $\varepsilon>0$
\begin{align} \label{e:AFP}
&\int_{\R} \abs{\mu_\varepsilon'(x)} \d{x} \leqslant \TV{\mu},&
&\int_{\R} \abs{\eta_\varepsilon'(x)} \d{x} \leqslant \TV{\eta}.
\end{align}
Now, we recall the interpolation inequality
\begin{equation} \label{e:interpolation}
\norm{u'}_{L^1} \leqslant 2\norm{u''}_{L^1}^{1/2} \norm{u}_{L^1}^{1/2},
\end{equation}
which holds for a compactly supported $u\in W^{2,1} (\R)$, see \cite[Theorem 6.4.1, Lemma 6.4.3]{hormander_hyperbolic_book}. Setting
\begin{align*}
&F_\varepsilon(x) = \int_{-\infty}^x \mu_\varepsilon(y) \, \d{y},&
&G_\varepsilon(x) = \int_{-\infty}^x \eta_\varepsilon(y) \, \d{y},
\end{align*}
the assumption of $\mu$ and $\eta$ being compactly supported implies that the difference $F_\varepsilon-G_\varepsilon$ belongs to $L^1(\R)$. Moreover, there holds
\[\norm{F_\varepsilon-G_\varepsilon}_{L^1} = d_1(\mu_\varepsilon,\eta_\varepsilon).\]
Choosing $u = F_\varepsilon-G_\varepsilon$ in \eqref{e:interpolation}, we get
\[\norm{\mu_\varepsilon-\eta_\varepsilon}_{L^1} \leqslant 2\norm{\mu_\varepsilon'-\eta'_\varepsilon}_{L^1}^{1/2}d_1(\mu_\varepsilon,\eta_\varepsilon)^{1/2} \leqslant 2\left(\norm{\mu_\varepsilon'}_{L^1}+\norm{\eta'_\varepsilon}_{L^1} \right)^{1/2}d_1(\mu_\varepsilon,\eta_\varepsilon)^{1/2} ,\]
and then \eqref{e:AFP} implies
\begin{align*}
& \norm{\mu_\varepsilon-\eta_\varepsilon}_{L^1} \leqslant 2(\TV{\mu}+\TV{\eta})^{1/2} d_1(\mu_\varepsilon,\eta_\varepsilon)^{1/2} .
\end{align*}
Since $\mu_\varepsilon\rightarrow \mu$ and $\eta_\varepsilon\rightarrow \eta$ locally strongly in $L^1(\R)$ and almost everywhere up to a subsequence, Fatou's lemma implies
\[\norm{\mu-\eta}_{L^1} \leqslant \liminf_{\varepsilon\searrow 0} \norm{\mu_\varepsilon-\eta_\varepsilon}_{L^1} .\]
We then observe that
\begin{multline*}
F_\varepsilon(x)-G_\varepsilon(x)
= \int_{-\infty}^x \left(\mu_\varepsilon(y) - \eta_\varepsilon(y)\right) \d{y}
= \int_{-\infty}^x \int_{-\infty}^{+\infty} \xi_\varepsilon(z)\left(\mu(y-z) - \eta(y-z)\right) \d{z} \, \d{y}\\
= \int_{-\infty}^{+\infty} \xi_\varepsilon(z) \int_{-\infty}^x\left(\mu(y-z) - \eta(y-z)\right) \d{y} \, \d{z} = \int_{-\infty}^{+\infty} \xi_\varepsilon(z) \left( F(x-z)-G(x-z)\right) \d{z}
= \xi_\varepsilon\ast (F-G),
\end{multline*}
where
\begin{align*}
&F(x) = \int_{-\infty}^x \mu(y) \, \d{y},&
&G(x) = \int_{-\infty}^x \eta(y) \, \d{y}.
\end{align*}
Hence, \cite[Theorem 2.2]{ambrosio_fusco_pallara} implies once again
\[d_1(\mu_\varepsilon,\eta_\varepsilon) = \norm{F_\varepsilon-G_\varepsilon}_{L^1} \leqslant \norm{F-G}_{L^1} = d_1(\mu,\eta),\]
which concludes the proof.
\end{proof}



\begin{thebibliography}{10}

\bibitem{ambrosio_fusco_pallara}
L.~Ambrosio, N.~Fusco, and D.~Pallara.
\newblock {\em Functions of bounded variation and free discontinuity problems}.
\newblock Oxford Mathematical Monographs. The Clarendon Press, Oxford University Press, New York, 2000.

\bibitem{AGS}
L.~Ambrosio, N.~Gigli, and G.~Savar\'e.
\newblock {\em Gradient flows in metric spaces and in the space of probability measures}.
\newblock Lectures in Mathematics ETH Z\"urich. Birkh\"auser Verlag, Basel, second edition, 2008.

\bibitem{AndreianovRosiniStivaletta}
B.~Andreianov, M.D. Rosini, and G.~Stivaletta.
\newblock On existence, stability and many-particle approximation of solutions of 1{D} {H}ughes' model with linear costs.
\newblock {\em J. Differential Equations}, 369:253--298, 2023.

\bibitem{bressan1992wft}
A.~Bressan.
\newblock Global solutions of systems of conservation laws by wave-front tracking.
\newblock {\em Journal of Mathematical Analysis and Applications}, 170(2):414--432, 1992.

\bibitem{bressan2000hyperbolic}
A.~Bressan.
\newblock {\em Hyperbolic Systems of Conservation Laws}.
\newblock Oxford University Press, 2000.

\bibitem{Coclite-book}
G.M. Coclite.
\newblock {\em Scalar conservation laws}.
\newblock SpringerBriefs in Mathematics. Springer, Singapore, [2024] \copyright 2024.

\bibitem{colombo_rossi}
R.M. Colombo and E.~Rossi.
\newblock On the micro-macro limit in traffic flow.
\newblock {\em Rend. Semin. Mat. Univ. Padova}, 131:217--235, 2014.

\bibitem{CrankNicolson1947}
J.~Crank and P.~Nicolson.
\newblock A practical method for numerical evaluation of solutions of partial differential equations of the heat-conduction type.
\newblock {\em Mathematical Proceedings of the Cambridge Philosophical Society}, 43(1):50--67, 1947.

\bibitem{dafermos1972polygonal}
C.~M. Dafermos.
\newblock Polygonal approximations of solutions of the initial value problem for a conservation law.
\newblock {\em Journal of Mathematical Analysis and Applications}, 38:33--41, 1972.

\bibitem{dafermos2016hyperbolic}
C.~M. Dafermos.
\newblock {\em Hyperbolic Conservation Laws in Continuum Physics}.
\newblock Springer, 4th edition, 2016.

\bibitem{DiFrancescoFagioliRadici}
M.~Di~Francesco, S.~Fagioli, and E.~Radici.
\newblock Deterministic particle approximation for nonlocal transport equations with nonlinear mobility.
\newblock {\em J. Differential Equations}, 266(5):2830--2868, 2019.

\bibitem{DiFrancescoFagioliRosiniRusso1}
M.~Di~Francesco, S.~Fagioli, M.D. Rosini, and G.~Russo.
\newblock Follow-the-leader approximations of macroscopic models for vehicular and pedestrian flows.
\newblock In {\em Active particles. {V}ol. 1. {A}dvances in theory, models, and applications}, Model. Simul. Sci. Eng. Technol., pages 333--378. Birkh\"auser/Springer, Cham, 2017.

\bibitem{DFIOSC2025}
M.~Di~Francesco, V.~Iorio, and M.~Schmidtchen.
\newblock The approximation of the quadratic porous medium equation via nonlocal interacting particles subject to repulsive {M}orse potential.
\newblock {\em SIAM Journal on Mathematical Analysis}, 57(5):4631--4679, 2025.

\bibitem{DiFrancescoRosini_ARMA}
M.~Di~Francesco and M.D. Rosini.
\newblock Rigorous derivation of nonlinear scalar conservation laws from follow-the-leader type models via many particle limit.
\newblock {\em Archive for Rational Mechanics and Analysis}, 217(3):831--871, 2015.

\bibitem{DiFrancescoStivaletta1}
M.~di~Francesco and G.~Stivaletta.
\newblock Convergence of the follow-the-leader scheme for scalar conservation laws with space dependent flux.
\newblock {\em Discrete Contin. Dyn. Syst.}, 40(1):233--266, 2020.

\bibitem{diperna1976global}
R.~J. DiPerna.
\newblock Global existence of solutions to nonlinear hyperbolic systems of conservation laws.
\newblock {\em Journal of Differential Equations}, 20(1):187--212, 1976.

\bibitem{FagioliFavre}
S.~Fagioli and G.~Favre.
\newblock Opinion formation on evolving network: the dpa method applied to a nonlocal cross-diffusion pde-ode system.
\newblock {\em European Journal of Applied Mathematics}, 35(6):748–775, 2024.

\bibitem{FagioliRadici}
S.~Fagioli and E.~Radici.
\newblock Solutions to aggregation–diffusion equations with nonlinear mobility constructed via a deterministic particle approximation.
\newblock {\em Mathematical Models and Methods in Applied Sciences}, 28(09):1801--1829, 2018.

\bibitem{FaTse}
S.~Fagioli and O.~Tse.
\newblock On gradient flow and entropy solutions for nonlocal transport equations with nonlinear mobility.
\newblock {\em Nonlinear Analysis}, 221:112904, 2022.

\bibitem{ferrari}
P.~A. Ferrari.
\newblock Shock fluctuations in asymmetric simple exclusion.
\newblock {\em Probab. Theory Related Fields}, 91(1):81--101, 1992.

\bibitem{ferrari_TASEP}
P.~L. Ferrari and P.~Nejjar.
\newblock Shock fluctuations in flat {TASEP} under critical scaling.
\newblock {\em J. Stat. Phys.}, 160(4):985--1004, 2015.

\bibitem{GaravelloPiccoli-book}
M.~Garavello and B.~Piccoli.
\newblock {\em Traffic flow on networks}, volume~1 of {\em AIMS Series on Applied Mathematics}.
\newblock American Institute of Mathematical Sciences (AIMS), Springfield, MO, 2006.
\newblock Conservation laws models.

\bibitem{gray2001granular}
J.~M. N.~T. Gray.
\newblock Granular flow in partially filled slowly rotating drums.
\newblock {\em Journal of Fluid Mechanics}, 441:1--29, 2001.

\bibitem{Haberman-book}
R.~Haberman.
\newblock {\em Mathematical models}, volume~21 of {\em Classics in Applied Mathematics}.
\newblock Society for Industrial and Applied Mathematics (SIAM), Philadelphia, PA, 1998.
\newblock Mechanical vibrations, population dynamics, and traffic flow, An introduction to applied mathematics, Reprint of the 1977 original.

\bibitem{HairerWannerII}
E.~Hairer, S.P. N{\o}rsett, and G.~Wanner.
\newblock {\em Solving Ordinary Differential Equations II: Stiff and Differential-Algebraic Problems}, volume~14 of {\em Springer Series in Computational Mathematics}.
\newblock Springer, Berlin, Heidelberg, 1996.

\bibitem{helbing2001traffic}
D.~Helbing.
\newblock Traffic and related self-driven many-particle systems.
\newblock {\em Reviews of Modern Physics}, 73:1067--1141, 2001.

\bibitem{holden2015front}
H.~Holden and N.~H. Risebro.
\newblock {\em Front Tracking for Hyperbolic Conservation Laws}, volume 152.
\newblock Springer, 2015.

\bibitem{holden2018continuum}
H.~Holden and N.~H. Risebro.
\newblock The continuum limit of follow-the-leader models --- a short proof.
\newblock {\em Discrete and Continuous Dynamical Systems}, 38(2):715--722, 2018.

\bibitem{holden2018ftl_lwr}
H.~Holden and N.~H. Risebro.
\newblock Follow-the-leader models can be viewed as a numerical approximation to the {L}ighthill--{W}hitham--{R}ichards model for traffic flow.
\newblock {\em Networks and Heterogeneous Media}, 13(3):409--421, 2018.

\bibitem{holden2024continuum_nonlocal_ftl}
H.~Holden and N.~H. Risebro.
\newblock The continuum limit of non-local follow-the-leader models.
\newblock {\em ESAIM: Mathematical Modelling and Numerical Analysis}, 58(4):1523--1539, 2024.

\bibitem{hormander_hyperbolic_book}
L.~H\"ormander.
\newblock {\em Lectures on nonlinear hyperbolic differential equations}, volume~26 of {\em Math\'ematiques \& Applications (Berlin) [Mathematics \& Applications]}.
\newblock Springer-Verlag, Berlin, 1997.

\bibitem{HundsdorferVerwer2003}
W.~Hundsdorfer and J.G. Verwer.
\newblock {\em Numerical Solution of Time-Dependent Advection-Diffusion-Reaction Equations}, volume~33 of {\em Springer Series in Computational Mathematics}.
\newblock Springer, Berlin, Heidelberg, 2003.

\bibitem{landim}
C.~Kipnis and C.~Landim.
\newblock {\em Scaling limits of interacting particle systems}, volume 320 of {\em Grundlehren der Mathematischen Wissenschaften [Fundamental Principles of Mathematical Sciences]}.
\newblock Springer-Verlag, Berlin, 1999.

\bibitem{kruvzkov}
S.~N. Kru\v{z}kov.
\newblock First order quasilinear equations in several independent variables.
\newblock {\em Mathematics of the USSR-Sbornik}, 10:\ 217--243, 1970.

\bibitem{kuznetsov1976accuracy}
N.~N. Kuznetsov.
\newblock Accuracy of some approximate methods for computing the weak solutions of a first-order quasilinear equation.
\newblock {\em USSR Computational Mathematics and Mathematical Physics}, 16(6):105--119, 1976.

\bibitem{kynch1952sedimentation}
G.~J. Kynch.
\newblock A theory of sedimentation.
\newblock {\em Transactions of the Faraday Society}, 48:166--176, 1952.

\bibitem{lighthill1955kinematic}
M.~J. Lighthill and G.~B. Whitham.
\newblock On kinematic waves {II.} {A} theory of traffic flow on long crowded roads.
\newblock {\em Proceedings of the Royal Society of London. Series A}, 229(1178):317--345, 1955.

\bibitem{lions_perthame_tadmor}
P.-L. Lions, B.~Perthame, and E.~Tadmor.
\newblock A kinetic formulation of multidimensional scalar conservation laws and related equations.
\newblock {\em J. American Math. Society}, 7:169–191, 1994.

\bibitem{QuarteroniValli2008}
A.~Quarteroni and A.~Valli.
\newblock {\em Numerical Approximation of Partial Differential Equations}, volume~23 of {\em Springer Series in Computational Mathematics}.
\newblock Springer, Berlin, Heidelberg, 2008.

\bibitem{richards1956shock}
P.~I. Richards.
\newblock Shock waves on the highway.
\newblock {\em Operations Research}, 4(1):42--51, 1956.

\bibitem{Rosini-book}
M.D. Rosini.
\newblock {\em Macroscopic models for vehicular flows and crowd dynamics: theory and applications}.
\newblock Understanding Complex Systems. Springer, Heidelberg, 2013.

\bibitem{savage1989motion}
S.~B. Savage and K.~Hutter.
\newblock The motion of a finite mass of granular material down a rough incline.
\newblock {\em Journal of Fluid Mechanics}, 199:177--215, 1989.

\bibitem{villani_optimal}
C.~Villani.
\newblock {\em Optimal transport. Old and new}, volume 338 of {\em Grundlehren der Mathematischen Wissenschaften}.
\newblock Springer-Verlag, Berlin, 2009.

\bibitem{whitham1974waves}
G.~B. Whitham.
\newblock {\em Linear and Nonlinear Waves}.
\newblock Wiley, New York, 1974.

\end{thebibliography}
\end{document}